\newtheorem{theorem}{Theorem}[section] 
\newtheorem{lemma}[theorem]{Lemma}
\newtheorem{proposition}[theorem]{Proposition}
\newtheorem*{proposition*}{Proposition}
\newtheorem*{question*}{Question}
\newtheorem*{theorem*}{Theorem}
\newtheorem*{claim*}{Claim}
\newtheorem*{corollary*}{Corollary}
\theoremstyle{definition}
\theoremstyle{remark}
\newtheorem*{remark*}{Remark}
\newtheorem*{definition*}{Definition}
\newcommand{\R}{\mathbb{R}}\newcommand{\N}{\mathbb{N}}
\newcommand{\Z}{\mathbb{Z}}\newcommand{\Q}{\mathbb{Q}}
\newcommand{\T}{\mathbb{T}}\newcommand{\C}{\mathbb{C}}
\newcommand{\D}{\mathbb{D}}\newcommand{\A}{\mathbb{A}}
\begin{document}

\author[P. Le Calvez]{Patrice Le Calvez}
\address{Institut de Math\'ematiques de Jussieu-Paris Rive Gauche, IMJ-PRG, Sorbonne Universit\'e, Universit\'e Paris-Diderot, CNRS, F-75005, Paris, France \enskip \& \enskip Institut Universitaire de France}
\curraddr{}
\email{patrice.le-calvez@imj-prg.fr}

\title[Surface  homeomorphisms with finitely many periodic points]{Conservative surface  homeomorphisms with finitely many periodic points}
\begin{abstract} The goal of the article is to characterize the conservative homeomorphisms of a closed orientable surface $S$
of genus $\geq 2$, that have finitely many periodic points. By conservative, we mean a map with no wandering point. As a particular case, when $S$ is furnished with a symplectic form, we characterize the symplectic diffeomorphisms of $S$ with finitely many periodic points.  \end{abstract}

\bigskip

\maketitle
\tableofcontents
\section{Introduction}

Let $S$ be a closed surface furnished with an area form $\omega$ and its associated Borel measure $\lambda_{\omega}$. What are the simplest examples of diffeomorphisms that preserve $\omega$ (or homeomorphisms that preserve $\lambda_{\omega}$) and that have finitely many periodic points? If $S$ is the $2$-sphere, the irrational rotations provide a natural family of examples. For every $\alpha\in\T=\R/\Z$, denote $R_{\alpha} :\widehat{\C}\to \widehat{\C}$,  the extension of the rotation $z\mapsto e^{2i\pi \tilde\alpha}z$ to the Riemann sphere, where $\tilde\alpha+\Z=\alpha$.  An irrational rotation is a map conjugate to $R_{\alpha}$, where $\alpha\not\in\Q/\Z$. In the case where $S$ is the $2$-torus $\T^2=\R^2/\Z^2$ and $\omega=dx\wedge dy$, there are two natural families of $\omega$-preserving diffeomorphisms with no periodic points: the irrational rotations
$$R_{\beta,\gamma}: (x,y)\mapsto (x+\beta, y+\gamma), \enskip (\beta,\gamma)\not \in{\Q}^2/{\Z}^2,$$ and the skew products over an irrational rotation of the circle
$$S_{\delta, k}: (x,y)\mapsto (x+ky, y+\delta), \enskip \delta\not\in\Q/{\Z}, \enskip k\in{\Z}\setminus\{0\}.$$ 
In the case where the genus of $S$ is larger than $1$, examples can be constructed, conjugate to the time one map of the flow of a minimal direction for a translation surface, giving birth to a much wider family of ``classical examples''. Recall that such flows admit a one dimensional section,  with a return map that is an interval exchange transformation.

One can modify some of the previous examples to enlarge our class of conservative maps with finitely many periodic points. The rotation $R_{\beta,\gamma}$ is the time one map of the flow induced by the constant vector field $X_{\tilde\beta,\tilde\gamma}:  (x,y)\mapsto(\tilde\beta,\tilde\gamma)$, where $\beta=\tilde\beta+\Z$ and $\gamma=\tilde\gamma+\Z$. Suppose that $\tilde\beta$ and $\tilde\gamma$ are rationally independent and let $\psi: \T^2\to[0,+\infty)$ be a continuous function with finitely many zeros. The time one map of the flow induced by the vector field $\psi X_{\tilde\beta,\tilde\gamma}$ has no periodic point except the zeros of $\psi$. Moreover, it preserves the measure $\psi ^{-1}\lambda_{\omega}$. This measure being finite if the function $\psi^{-1}$ is integrable, one can construct in that way  a homeomorphism of the $2$-torus, preserving a finite measure equivalent to the Lebesgue measure $\lambda_{\omega}$, having an arbitrarily large number of fixed points and no other periodic point. There is a well known alternative way to construct a smooth example: in a neighborhood $W$ of a given point, choose a symplectic system of coordinates $(u,v)$, vanishing at this point, such that the vector field can be written $\partial/\partial u$ and is associated to the Hamiltonian function $v$. One can replace this function with a new Hamiltonian function having a unique singular point, being equal to $v(u^2+v^2)$ in a neighborhood of $(0,0)$, giving birth to a foliation (with a singular leaf) such that every new leaf intersects the boundary of $W$ in a pair of points belonging to an old leaf. A similar construction can be done on a surface of genus $\geq 2$ for a $\omega$-preserving vector field to obtain a smooth symplectic map with an arbitrarily large number of fixed points and no other periodic point. Note that the map $f$ obtained in that way is isotopic to the identity relative to its fixed point set. Consider now a finite cyclic covering $\hat S$ of $S$. There is a unique lift $\hat f$ of $f$ to $\hat S$ that is isotopic to the identity. For every covering automorphism $T$,  the map $T\circ \hat f$ preserves the lifted form $\hat \omega$ and has finitely many periodic points, all of them with the same period.

Beyond the previous examples, one can find symplectic maps having finitely many periodic points, with a much richer dynamics. The approximation method by conjugation, introduced by Anosov-Katok, permits to construct smooth symplectic diffeomorphisms on the $2$-sphere with exactly two fixed points $z_0$, $z_1$  and no other periodic point, such that $\lambda_{\omega}$ is ergodic (see \cite {AK}).  In these examples, there exists an irrational number $\tilde\alpha$ (in fact a Liouville number) such that every point of $S\setminus \{z_0,z_1\}$ ``turns''  in the annulus $S\setminus \{z_0,z_1\}$ with an angular speed equal to $2\pi\tilde\alpha$. In fact, for every Liouville number, one can make the construction in such a way that $\lambda_{\omega}$ is weakly mixing (see \cite {FS}). Examples of symplectic maps having finitely many periodic points, with a rich dynamics can also be constructed in the $2$-torus (see \cite{WZ} for instance).

\medskip
The main issue of this article is to understand to what extent the examples given above describe all symplectic diffeomorphisms of surfaces that have finitely many periodic points. We will see that they permit to get a classification: every symplectic diffeomorphism with finitely many periodic points is naturally associated to one of the examples above. Moreover, this classification is valid for a wider set: the set of homeomorphisms with no wandering point (that we will call {\it non wandering homeomorphisms}) Furthermore we will see that non wandering homeomorphisms with infinite many periodic points can be divided into maps of finite order and maps with periodic points of arbitrarily large period.  Before stating the precise results, recall that a non empty open set $U\subset S$ is wandering if $U\cap f^{-n}(U)=\emptyset$ for every $n\geq 1$ and that points of $U$ are wandering points. So, $f$ is non wandering if, for every non empty open set $U\subset S$, there exists $n\geq 1$ such that $U\cap f^{-n}(U)\not=\emptyset$. By Poincar\'e's Recurrence Theorem, every symplectic diffeomorphism, or more generally every $\lambda_{\omega}$-preserving homeomorphism, is non wandering. 

\subsection{The case of the sphere}

 Denote  $ \pi:(r,\theta)\mapsto re^{2i\pi\theta}$ the covering projection defined on the universal covering space $(0,+\infty)\times\R$ of $\C\setminus\{0\}$. An {\it irrational pseudo-rotation of $\C$ of rotation number $\alpha\in\T\setminus\Q/\Z$} is a non wandering homeomorphism $f$ of $\C$ that fixes $0$ and that satisfies the following property:
 
  if $\widetilde f$ is a lift of $f\vert_{\C\setminus\{0\}}$ to $(0,+\infty)\times\R$, there exists $\tilde\alpha\in\R$ satisfying $\tilde\alpha+\Z=\alpha$ such that for every compact set $\Xi\subset\C\setminus\{0\}$, and every $\varepsilon >0$, there exists $N\geq 1$ such that $$(P_{\tilde \alpha}):\enskip n\geq N \enskip\mathrm{and}\enskip \widetilde z\in \pi^{-1}(\Xi)\cap\widetilde f^{-n}(\pi^{-1}(\Xi))\Rightarrow \left\vert {p_1(\widetilde f^n(\widetilde z))-p_1(\widetilde z)\over n}-\tilde\alpha\right\vert \leq \varepsilon,$$ where $p_1: (\theta,r)\mapsto \theta$ is the projection on the first factor.\footnote{Requiring the whole set $\Xi=\C\setminus\{0\}$ to satisfy $(P_{\tilde \alpha})$ would be too strong, and in opposition to this weaker definition, would define a property that is not  invariant by conjugacy in the group of orientation preserving homeomorphisms of $\C$ that fix $0$.} 
 By extension, every homeomorphism of a $2$-sphere that is conjugate to the extension of $f$ to $\hat {\C}$ would be called an irrational pseudo-rotation of rotation number $\alpha$. 

The following result gives a very precise description of non wandering homeomorphisms of the $2$-sphere with finitely many periodic points.

\begin{theorem} \label{th:rappel1} Let $f$ be an orientation preserving and non wandering homeomorphism of the $2$-sphere. Then exactly one of the following assertions holds:

\begin{enumerate}
\item The map $f$ has periodic points of period arbitrarily large.
\item There exists $\alpha\in\Q/\Z$ such that $f$ is conjugate to $R_{\alpha}$.
\item There exists $\alpha\in \T\setminus\Q/\Z$ such that $f$ is an irrational pseudo-rotation of rotation number $\alpha$. 

\end{enumerate}
\end{theorem}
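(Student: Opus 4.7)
The plan is to prove the contrapositive: assuming (1) fails, so that every periodic point of $f$ has period bounded by some integer $N$, I will show that either (2) or (3) must hold. Writing $g = f^{N!}$, every periodic point of $f$ is a fixed point of $g$, and the argument splits according to whether $\mathrm{Fix}(g)$ is infinite or finite.

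\emph{Infinite case.} I would argue that $g = \mathrm{id}$, via the following rigidity statement: a non-wandering orientation-preserving homeomorphism of the $2$-sphere with an accumulation of fixed points, but without periodic points of arbitrarily large period, must be the identity. The proof proceeds around an accumulation point $z_\infty\in\mathrm{Fix}(g)$ by analyzing the dynamics on a connected component $U$ of $\S^2\setminus\mathrm{Fix}(g)$ abutting $z_\infty$: after prime-end compactification, $U$ becomes an annulus on which an iterate of $g$ acts non-wanderingly, and Franks' realization theorem forces its rotation interval to collapse to $\{0\}$, lest accumulating periodic orbits produce unbounded periods. Closing the argument rules out non-identity dynamics on $U$, and then by connectedness of $\S^2$. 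Once $g=\mathrm{id}$, $f$ has finite order, and by the Kerékjártó--Brouwer theorem on periodic orientation-preserving homeomorphisms of the $2$-sphere, $f$ is conjugate to a rotation $R_\alpha$ with $\alpha\in\Q/\Z$; this is (2).

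\emph{Finite case.} By the Lefschetz fixed point theorem $\mathrm{Fix}(f)\neq\emptyset$. I would first show $\#\mathrm{Fix}(f^n)\leq 2$ for every $n\geq 1$: if some iterate had three or more fixed points, a sphere version of Franks' theorem, obtained by blow-up from his annulus statement, would furnish periodic orbits of arbitrarily large period, contradicting our hypothesis. Applied at $n=N!$, this gives $\#\mathrm{Per}(f)\leq 2$; combined with a Lefschetz index count, one obtains $\mathrm{Per}(f)=\mathrm{Fix}(f)=\{z_0,z_1\}$ with fixed-point indices summing to $2$.

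It then remains to show $f$ is an irrational pseudo-rotation. The complement $S\setminus\{z_0,z_1\}$ is an $f$-invariant open annulus on which $f$ is non-wandering and periodic-point-free. Lifting to the universal cover $(0,+\infty)\times\R$, the rotation set of any lift $\widetilde f$ contains no rational (else Franks' realization theorem furnishes a periodic point), and being closed and connected reduces to a single irrational value $\tilde\alpha$, whose class $\alpha\in\T\setminus\Q/\Z$ is the rotation number. The most delicate step---where I expect the main obstacle---is establishing the uniform estimate $(P_{\tilde\alpha})$ on compact subsets: Le Calvez's theory of maximal isotopies and transverse equivariant foliations provides the required combinatorial control, because the angular displacement of a returning orbit is dictated, up to bounded error, by the number of foliation leaves crossed, and this number is uniformly controlled on any compact subset of $\C\setminus\{0\}$.
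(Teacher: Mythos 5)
Your finite case and the final annular argument track the paper's own sketch: the non-wandering extension of Franks' sphere theorem (\cite{F4}, \cite{Lec2}) bounds $\#\mathrm{Fix}(f^n)$ by $2$, Lefschetz pins down exactly two fixed points, and then the irrational pseudo-rotation structure comes from Poincar\'e--Birkhoff-type rotation-number theory on the complementary open annulus. You also correctly identify $(P_{\tilde\alpha})$ as the genuinely delicate point and that it is Le Calvez's equivariant foliation theory (\cite{Lec1}) that supplies the uniform control; this is exactly the reference the paper gives. One small omission: you should say why a \emph{single} fixed point is impossible (on its complement $f$ would be a fixed-point-free orientation-preserving homeomorphism of the plane, which by Brouwer's plane translation theorem cannot be non-wandering).

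The real problem is the infinite case. The rigidity statement you want (``non-wandering, accumulation of fixed points, bounded periods $\Rightarrow$ identity'') is true, but the proof you sketch for it is both gappy and unnecessary. A connected component $U$ of the complement of $\mathrm{Fix}(g)$ abutting an accumulation point has no reason to prime-end compactify to an \emph{annulus} -- it may be simply connected or may have many ends -- and even granting that, a rotation interval equal to $\{0\}$ does not by itself force $g|_U = \mathrm{id}$; non-wandering annulus maps with trivial rotation number can have rich dynamics. The shorter route, which the paper has in mind and which makes your case split on $\#\mathrm{Fix}(g)$ superfluous, is: if (2) fails then $f$ has infinite order (else Ker\'ekj\'art\'o immediately gives a rational rotation), and then the non-wandering Franks theorem says that any iterate with three or more fixed points already produces periodic points of unbounded period; so (1) failing forces $\#\mathrm{Fix}(f^n)\le 2$ for every $n$ regardless of whether $\mathrm{Fix}(g)$ is finite or infinite. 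In other words, your infinite case should not arise at all once $f$ has infinite order, and your attempt to handle it by a separate prime-end argument is where the hand-waving concentrates.
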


Theorem \ref{th:rappel1} is well known. Let us briefly explain why it is true. It is known (see Franks \cite{F4}) that an area preserving homeomorphism of the $2$-sphere that has at least three fixed points has infinitely many periodic points (with periods arbitrarily large if the map has infinite order). Moreover, this result is also true for non wandering homeomorphisms (see \cite{Lec2}). So, if $f$ is an orientation preserving and non wandering homeomorphism of the $2$-sphere and if neither (1) nor (2) holds, then $f$ has no periodic point but two fixed points. To get (3) it remains to study the non wandering homeomorphisms of the annulus $\T\times\R$ that are isotopic to the identity (this means that the orientation is preserved and the ends are fixed) and that have no periodic point. Assertion (3) is related to the Poincar\'e-Birkhoff Theorem and its many generalizations, its meaning is that there is a unique rotation number (for whatever reasonable definition)  and moreover that it cannot be rational (see \cite{F1}, \cite{Lec1}).

\subsection{The case of the torus} A classification of the area preserving homeomorphisms of $\T^2$ with finitely many periodic points has been done by Addas-Zanata and Tal \cite{AT}. This classification is still valid for non wandering homeomorphisms. We will state the result here but will give the definitions appearing in the statement  in the last section of this article (rotation set, vertical rotation set, automorphism of $\T^2$). We will give the proof in the same section. The proof is nothing but the original proof of \cite{AT} except at one point where a later result of Addas-Zanata, Garcia and Tal  \cite{AGT} is needed to replace the area preserving condition with the non wandering condition.

\begin{theorem} \label{th:rappel2}  Let $f$ be an orientation preserving  and non wandering homeomorphism of $\T^2$. Then exactly one of the following assertions holds:

\begin{enumerate}
\item The map $f$ has periodic points of period arbitrarily large.
\item There exist $g\in \mathrm{Aut}(\T^2)$, $k\in\Z\setminus\{0\}$ and $\delta\not\in \Q/\Z$ such that $g\circ f\circ g^{-1}$ is isotopic to $S_{0,k}$, with a vertical rotation set reduced to $\delta$. In this case $f$ has no periodic point.
\item The map $f$ is isotopic to the identity and its rotation set (for every lift) is a point or a segment that does not meet $\Q^2/\Z^2$. In this case $f$ has no periodic point.
\item There exists an integer $q\geq 1$ such that: 

\begin{itemize}
\item the periodic points of $f^q$ are fixed;
\item the fixed point set of $f^q$ is non empty and $f^q$ is isotopic to the identity relative to it;
\item the rotation set of the lift of $f^q$ that has fixed points is reduced to $0$ or is a segment with irrational slope that has zero as an end point.\footnote{This last case contains the case where $f$ has finite order}
\end{itemize}

\end{enumerate}
\end{theorem}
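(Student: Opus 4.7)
The plan is to follow the Addas-Zanata--Tal classification scheme, using as main inputs mapping class group considerations, rotation set theory, and the extension of the Franks realization theorem to non-wandering homeomorphisms provided by \cite{AGT}. Every orientation-preserving homeomorphism of $\T^2$ is isotopic to a unique matrix $A\in SL(2,\Z)$; after conjugating by a suitable element of $\mathrm{Aut}(\T^2)$ (which preserves both the non-wandering property and the cardinality of the periodic set) we may assume $A$ is in one of four normal forms: hyperbolic, unipotent $\begin{pmatrix} 1 & k \\ 0 & 1\end{pmatrix}$ with $k\neq 0$, of non-trivial finite order, or the identity. The argument is then a case analysis driven by these four isotopy classes.

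In the hyperbolic case, Handel's global shadowing theorem (every homeomorphism isotopic to an Anosov automorphism is semi-conjugate to it) immediately produces periodic orbits of every sufficiently large period, placing us in case (1). In the unipotent case $f$ is isotopic to some $S_{0,k}$ and the vertical rotation set $\rho_V(f)\subset\R$ is a compact interval. By \cite{AGT}, every rational in $\rho_V(f)$ is realised by a periodic orbit; finiteness of the periodic set therefore restricts $\rho_V(f)$ to contain at most finitely many rationals. Combined with horseshoe-type arguments this forces $\rho_V(f)$ to be a singleton, and a rational singleton $p/q$ would produce, inside the annulus cut by a periodic orbit of vertical rotation $p/q$, enough twist to furnish orbits of unbounded period. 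Hence $\rho_V(f)=\{\delta\}$ with $\delta\not\in\Q/\Z$, which is case (2), and no periodic orbit survives.

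If $[f]$ is the identity, one fixes a lift $\widetilde f$ and considers its Misiurewicz--Ziemian rotation set $\rho(\widetilde f)\subset\R^2$, a compact convex set. If $\rho(\widetilde f)$ has non-empty interior, every rational in its interior is realised by a periodic orbit (Franks, extended to the non-wandering setting by \cite{AGT}), supplying orbits of unbounded period and hence case (1). If $\rho(\widetilde f)$ is a point or a segment disjoint from $\Q^2$, then no periodic point exists and we are in case (3). Otherwise $\rho(\widetilde f)$ is a segment or point meeting $\Q^2$; replacing $\widetilde f$ by an appropriate lift we may assume $0\in\rho(\widetilde f)$, so $\widetilde f$ has a fixed point by the non-wandering Brouwer-Franks theorem. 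Two rationally independent rational points in $\rho(\widetilde f)$ would again give orbits of every large period, so $\rho(\widetilde f)$ must be either $\{0\}$ or a segment of irrational slope with $0$ as an endpoint; the isotopy of $f$ relative to the finite fixed-point set is then obtained from Le Calvez's equivariant foliated isotopy machinery applied in the universal cover. Finally, if $[f]$ has non-trivial finite order $q$ in $SL(2,\Z)$, applying the identity-class analysis to $f^q$ yields case (4), the possible extra periodic points of $f^q$ being excluded by the finiteness hypothesis.

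The main obstacle is the identity-isotopy-class analysis feeding cases (3) and (4): one has to extract from the finiteness of the periodic set, together with the non-wandering hypothesis alone, both the precise combinatorial shape of $\rho(\widetilde f)$ and the isotopy-relative-to-the-fixed-point-set statement. This is the step where \cite{AGT} substitutes for Franks' realisation theorem used in the area-preserving proof of Addas-Zanata--Tal, and where the equivariant foliated isotopy has to be combined with the rotation-set dichotomy to show that, after removing the finite fixed-point set, the induced non-wandering dynamics on the multiply-punctured torus has exactly the controlled rotational behaviour required.
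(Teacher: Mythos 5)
Your high-level case analysis (hyperbolic / unipotent / finite order / identity, handled via the rotation set or vertical rotation set and Franks-type realization) matches the paper's structure, but two of your crucial steps are vague or incorrect, precisely at the points where the non-wandering hypothesis has to replace Poincar\'e recurrence.

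First, in the unipotent case, after reducing to a rational singleton $\mathrm{vrot}(\hat f)=\{p/q\}$, you assert that one gets ``enough twist inside the annulus cut by a periodic orbit of vertical rotation $p/q$'' to produce unbounded periods. But $f$ need not have any periodic orbit at this stage, so there is no such annulus to cut along. The paper's argument is different: AGT supplies a compact connected essential set $\hat K\subset\T\times\R$ invariant under $\hat f^q\circ V^{-p}$; its vertical translates $V^m(\hat K)$ carry invariant measures whose $\tilde f$-rotation numbers differ by $mk$, so the Poincar\'e--Birkhoff theorem (Theorem~\ref{th:PB}) applies in the annulus $\T\times\R$. The second alternative of Theorem~\ref{th:PB} (an essential free loop) is ruled out because it would produce a wandering disk whose forward orbit stays trapped between two of the sets $V^m(\hat K)$, hence relatively compact, contradicting Proposition~\ref{prop:wandering}. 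This last proposition is the actual substitute for the finite-area argument in \cite{AT}, and it does not appear in your outline.

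Second, in the identity-isotopy case with $\mathrm{rot}(\tilde f)$ a nontrivial segment of rational slope meeting $\Q^2$, you claim that ``two rationally independent rational points would again give orbits of every large period''. That inference is not available: the rationals on such a segment lie in its relative interior but not in the interior of $\mathrm{rot}(\tilde f)$ as a subset of $\R^2$ (which is empty), so Franks' interior-realization theorem does not apply. The paper handles this sub-case via D\'avalos' boundedness theorem for the lifted dynamics on the annulus $\R^2/T$ (with $T:z\mapsto z+p'$ along the slope direction), which yields a compact connected essential invariant set, and then runs the same Theorem~\ref{th:PB} plus Proposition~\ref{prop:wandering} scheme as in the unipotent case. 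You also conflate the ``unique rational point'' sub-case (where one invokes \cite{LT1} to see that the rational endpoint is extremal, Franks' ergodic realization for a fixed point, and then \cite{Lec2} for the dichotomy and the isotopy-rel-fixed-set) with this ``rational-sloped segment'' sub-case; these need to be treated separately, and your sketch covers only the first. Finally, a smaller point: \cite{AGT} does not substitute for Franks' realization theorem (which holds for all non-wandering, indeed all, homeomorphisms in the homotopy class); it supplies the compact essential invariant set when the rotation set is a rational singleton, and it is Proposition~\ref{prop:wandering} that replaces the measure-theoretic recurrence used in~\cite{AT}.
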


\subsection{The case of high genus} 
Let us state the main result of the article that gives a characterization of non wandering homeomorphisms of a surface of genus $\geq 2$ that have finitely many periodic points:

\begin{theorem} \label{th:main}  Let $S$ be an orientable closed surface of genus $g\geq2$ and $f$ an orientation preserving and non wandering homeomorphism of $S$. Then the following alternative holds:

\begin{enumerate}
\item The map $f$ has periodic points of period arbitrarily large.
\item There exists an integer $q\geq 1$ such that: 

\begin{itemize}
\item the periodic points of $f^q$ are fixed;
\item  the fixed point set of $f^q$ is non empty and $f^q$ is isotopic to the identity relative to it . \footnote{Here again the second case contains the case where $f$ has finite order}

\end{itemize}

\end{enumerate}
\end{theorem}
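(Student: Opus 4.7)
Assuming (1) fails, the periods of periodic points of $f$ are bounded by some integer $N$, so $g := f^{N!}$ is non-wandering and satisfies $\mathrm{Per}(g) = \mathrm{Fix}(g)$. My goal is to find a further integer $r$ so that $g^r$ is isotopic to the identity relative to its non-empty fixed point set; absorbing $r$ into $q = r \cdot N!$ then gives the conclusion.

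\textbf{Step 1 (existence of fixed points via Nielsen--Thurston).} I would first examine $[g] \in \mathrm{MCG}(S)$. A pseudo-Anosov class produces periodic orbits of arbitrarily large period in any homeomorphism in the class (Handel's shadowing theorem), contradicting $\mathrm{Per}(g) = \mathrm{Fix}(g)$; hence $[g]$ is periodic or reducible. If periodic of order $r$, then $g^r$ is isotopic to the identity, and the Lefschetz formula gives $L(g^r) = 2 - 2g < 0$, forcing $\mathrm{Fix}(g^r) \neq \emptyset$; by construction these points lie in $\mathrm{Fix}(g)$. The reducible case is handled by induction on the Nielsen--Thurston decomposition: on higher-genus pieces I recurse; annular pieces invoke torus- and sphere-style arguments in the spirit of Theorems~\ref{th:rappel1} and~\ref{th:rappel2}. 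Replacing $g$ by $g^r$ if necessary, I may henceforth assume that $g$ is isotopic to the identity on $S$ and has a non-empty fixed point set.

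\textbf{Step 2 (isotopy relative to $\mathrm{Fix}(g)$).} Suppose by contradiction that $g$ is not isotopic to the identity relative to $\mathrm{Fix}(g)$. By Jaulent's theorem I choose a maximal $g$-unlinked set $X \subseteq \mathrm{Fix}(g)$ and an identity isotopy $I$ from $\mathrm{Id}_S$ to $g$ fixing $X$ pointwise; Le Calvez's transverse foliation theorem then yields a singular oriented foliation $\mathcal{F}$ on $S \setminus X$ transverse to $I$, so that every $g$-orbit in $S \setminus X$ has an associated $\mathcal{F}$-transverse trajectory, liftable to the universal cover. The Le Calvez--Tal forcing theorem asserts that an $\mathcal{F}$-transverse self-intersection of a recurrent trajectory produces a topological horseshoe, hence periodic orbits of arbitrarily large period, which is forbidden. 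So no such intersection occurs. The non-wandering hypothesis provides, via Atkinson's lemma applied to an invariant probability measure of full support, generic bi-recurrent points; the absence of transverse intersections, combined with the hyperbolicity of $\widetilde{S \setminus X} \cong \mathbb{H}^2$, should then force the lifts of these bi-recurrent orbits to drift trivially, contradicting the non-triviality of $[g]$ in $\mathrm{MCG}(S,X)$.

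\textbf{Main obstacle.} The core difficulty is the final implication in Step 2. On the sphere and the torus it genuinely fails: irrational pseudo-rotations and skew products exhibit non-trivial mapping classes relative to their fixed points without any $\mathcal{F}$-transverse self-intersection. On a surface of genus $\geq 2$ the reason it should succeed is geometric: every non-trivial deck transformation in $\pi_1(S \setminus X)$ is loxodromic, and a bi-recurrent trajectory drifting non-trivially in $\mathbb{H}^2$ must cross its axis transversally infinitely often, producing the forbidden self-intersection. Making this precise, handling the interface between foliated dynamics and the deck-group action, and treating the reducible pieces of Step 1 uniformly, is where I expect the technical heart of the proof to lie.
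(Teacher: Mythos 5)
Your Step~1 correctly identifies the Nielsen--Thurston trichotomy and the Lefschetz argument for the periodic case, and your Step~2 correctly identifies that the heart of the matter is to upgrade ``isotopic to the identity'' to ``isotopic to the identity rel $\mathrm{Fix}$.'' However, there are two genuine gaps, and they are not cosmetic.

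First, the \emph{reducible} (non-pseudo-Anosov, non-finite-order) case is not a footnote to be dispatched by ``induction on the pieces'' or ``torus- and sphere-style arguments on annular pieces.'' After passing to a power, $f$ is isotopic to a Dehn twist map, and this is Proposition~\ref{prop:twist} --- one of the two main technical results of the paper. The piece-by-piece recursion you sketch does not work, because the interesting dynamics in the Dehn twist case lives precisely in the twisted annuli, and you cannot analyse an annular piece in isolation: the paper lifts to the \emph{annular cover} $\hat S$ associated to a twisted annulus, obtains a boundary twist condition there, and runs Poincar\'e--Birkhoff; if this fails to produce high-period orbits, an essential free loop in $\hat S$ exists and a free-line forcing argument in the universal cover (Proposition~\ref{prop:fixedpoint} together with the forcing machinery of Section~\ref{s.forcing}) manufactures the missing periodic points. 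None of this is an import from Theorems~\ref{th:rappel1} or~\ref{th:rappel2}; it is the content of Section~\ref{s.dehn}.

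Second, your Step~2 is aimed at the wrong target and, by your own admission, cannot be closed. The paper does \emph{not} attack the full ``isotopic to identity rel $\mathrm{Fix}(g)$'' statement head-on via transverse foliations. It instead invokes the result of \cite{Lec2} that either $f$ has arbitrarily large periods or every \emph{contractible} periodic point is fixed and $f$ is isotopic to the identity rel the contractible fixed set, and then reduces the remaining work to proving that \emph{every periodic point is contractible}. The obstruction you are unable to rule out is exactly a non-contractible periodic point, and the mechanism for killing it is not the Le Calvez--Tal foliated-forcing theory but again an annular-cover argument: a non-contractible periodic point gives a deck transformation $T_0$, the cover $\hat S = \tilde S / T_0$, a boundary twist condition coming from the rotation number $1/q_0$ of the lifted orbit against the fixed ideal boundary, and then the same Poincar\'e--Birkhoff-plus-free-line dichotomy as in the Dehn twist case. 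The subtlety you flag about bi-recurrent drift and loxodromic axes is a real phenomenon, but it is captured more efficiently (and in a way that survives the delicate case $\rho_1 = 0$ of Section~\ref{s.isoidentty}) by working with free lines and the fixed point criterion rather than with $\cF$-transverse self-intersections. As written, your proposal identifies the correct skeleton but leaves both load-bearing walls --- the Dehn twist proposition and the non-contractible-periodic-point argument --- unbuilt.
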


\begin{remark*} Suppose that $S$ is furnished with a symplectic form $\omega$ and that $f$ preserves $\lambda_{\omega}$. If $f$ satisfies (2) and if $f^q$ is not the identity, then the rotation vector of $\lambda_{\omega}$ (for $f^q$) is a non zero element of $H_1(S,\R)$ (in other words $f^q$ is not Hamiltonian). This is a particular case of the Conley Conjecture (see \cite{FH} or \cite{Lec2}). The examples we must have in mind are the ones given at the beginning of the introduction where a section exists.
\end{remark*}

Let us explain the structure of the proof of Theorem \ref{th:main}.  

\begin{definition*} Let $S$ be an orientable closed surface of genus $\geq 2$. 
A {\it Dehn twist map} of $S$ is an orientation preserving homeomorphism $h$ of $S$ that satisfies the following properties:

\begin{itemize}

\item there exists a non empty finite family $(A_{i})_{i\in I}$ of pairwise disjoint invariant essential closed annuli (meaning sets homeomorphic to $\T\times[0,1]$ with boundary loops non homotopic to zero in $S$); 

\item no connected component of $S\setminus \cup_{i\in I}A_i$ is an annulus (meaning a set homeomorphic to $\T\times(0,1)$);

\item $h$ fixes every point of $S\setminus \cup_{i\in I}A_i$;

\item for every $i\in I$, there exists $r_i\in\Z\setminus\{0\}$ such that  $h_{\vert A_i}$ is conjugate to $\tau^{r_i}$, where $\tau$ is the homeomorphism of $\T\times[0,1]$ that is lifted to the universal covering space $\R\times[0,1]$ by $\tilde\tau:(x,y)\mapsto (x+y,y)$.

\end{itemize}

The annuli $A_i$ will be called  the {\it twisted annuli} and $r_i$ the {\it twist coefficients}.
\end{definition*}

The Nielsen-Thurston Decomposition Theorem (see \cite{CB}, \cite{FLP} or \cite{T}) tells us the following:  if $f$ is an orientation preserving homeomorphism of $S$, then there exists a finite family $(A_{i})_{i\in I}$ of pairwise disjoint essential closed annuli and a homeomorphism $f_*$ isotopic to $f$ such that:

\begin{itemize}

\item no connected component of $S\setminus \cup_{i\in I}A_i$ is an annulus;

\item the family $(A_i)_{i\in I}$ is invariant by $f_*$;

\item  for every connected component $W$ of $S\setminus \cup_{i\in I}A_i$, there exists $q$ such that $f_*{}^q(W)=W$ and $f_*{}^q{}_{\vert W}$ is isotopic to a map of finite order or to a pseudo-Anosov map;

\item for every $i\in I$, there exists $q_i$ such that $f_*{}^{q_i}(A_i)=A_i$ and $k_i\in \Z$ such that  $f_*{}^{q_i}{}_{\vert A_i}$ is conjugate to $\tau^{k_i}$.
\end{itemize}

In particular, the following classification holds for an orientation preserving homeomorphism $f$ of $S$:

\begin{itemize}

\item there exists at least one component of pseudo-Anosov type in the Nielsen-Thurston classification;

\item there exists $q\geq 1$ such that $f^q$ is isotopic to a Dehn twist map;

\item  there exists $q\geq 1$ such that $f^q$ is isotopic to the identity.
\end{itemize}

It is well known and folklore that if there exists a component of pseudo-Anosov type, then $f$ has infinitely periodic points of arbitrarily large period (see \cite{FLP} or \cite{Ha}).  As we will see in the next section, it is easy to prove that a power of a non wandering map is still non wandering. So, Theorem \ref{th:main}  can be deduced from the two following results:

\begin{proposition} \label{prop:twist} Let $S$ be an orientable closed surface of genus $g\geq2$ and $f$ a non wandering homeomorphism of $S$. If $f$ is isotopic to a Dehn twist map, then it has periodic points of period arbitrarily large.
\end{proposition}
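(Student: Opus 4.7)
The strategy is to pass to a cyclic cover of $S$ corresponding to a twisted annulus, extract a ``rotation gap at infinity'' from the Dehn twist structure, and conclude by a Poincar\'e-Birkhoff-type theorem for non-wandering homeomorphisms of an open annulus. Concretely, let $h$ be a Dehn twist map isotopic to $f$, with twisted annuli $(A_i)_{i\in I}$ and nonzero twist coefficients $(r_i)_{i\in I}$. Fix $A=A_{i_0}$, $r=r_{i_0}$, let $\gamma$ be the core curve of $A$, and let $\pi:\widehat S\to S$ be the covering associated with the cyclic subgroup $\langle\gamma\rangle\subset\pi_1(S)$. Then $\widehat S$ is an open annulus, and $A$ lifts homeomorphically to a compact essential sub-annulus $\widehat A\subset\widehat S$. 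Choosing the isotopy from $\mathrm{id}_S$ to $h$ supported in $\bigcup_{i\in I} A_i$ yields preferred lifts $\widehat h$ and $\widehat f$. Lifting further to the universal cover $\widetilde{\widehat S}$ (which identifies with the universal cover $\widetilde S$ of $S$) gives lifts $\widetilde{\widehat h},\widetilde{\widehat f}$ commuting with the generator $T_\gamma$ of the deck group of $\widetilde{\widehat S}\to\widehat S$; such commutativity holds because $h_*(\gamma)=\gamma$ in $\pi_1(S)$ and $f_*=h_*$.

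The Dehn twist structure of $h$ then yields a rotation gap: on one of the two components of $\widetilde{\widehat S}$ minus the preimage of $\widehat A$, $\widetilde{\widehat h}$ coincides with the identity, and on the other with $T_\gamma^r$. Since $\widehat f$ differs from $\widehat h$ by a homeomorphism isotopic to the identity, $\widetilde{\widehat f}$ has the same asymptotic translation numbers $0$ and $r$ at the two ends of $\widehat S$. One would then apply a Poincar\'e-Birkhoff-type theorem for non-wandering homeomorphisms of the open annulus, in the vein of Franks \cite{F1} and Le Calvez \cite{Lec1}, to conclude that every $p/q\in(0,r)$ (or $(r,0)$ if $r<0$) in lowest terms is realized as the rotation number of a periodic orbit of $\widehat f$, i.e.\ an orbit $\widetilde z$ with $\widetilde{\widehat f}^q(\widetilde z)=T_\gamma^p(\widetilde z)$. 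Since $\gcd(p,q)=1$, such an orbit projects to a periodic point of $f$ of period exactly $q$, and as $|r|\geq1$ the rotation interval contains rationals with arbitrarily large denominator.

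The main obstacle is the Poincar\'e-Birkhoff step. The non-wandering condition on $f$ does not transfer directly to $\widehat f$ on $\widehat S$, because the cover $\widehat S\to S$ is generally not normal: for a simple closed geodesic $\gamma$ on a surface of genus $\geq2$, the normalizer of $\langle\gamma\rangle$ in $\pi_1(S)$ is $\langle\gamma\rangle$ itself. What one obtains instead is a weaker \emph{equivariant non-wandering} property: for every non-empty open $U\subset\widehat S$ there exist $n\geq1$ and a connected component $U'$ of $\pi^{-1}(\pi(U))$ with $\widehat f^n(U)\cap U'\neq\emptyset$. Bootstrapping this condition, together with the end-rotation data above, into periodic orbits realizing every intermediate rational rotation number will likely require the full machinery of maximal isotopies and transverse foliations, applied to $\widehat f$ viewed as a homeomorphism of $\widehat S$ isotopic to the identity (since the mapping class group of the oriented open annulus is trivial). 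Managing this equivariant setting, and in particular ruling out that the periodic orbits so produced live only in sheets of the cover disjoint from the projection image, is the delicate part of the argument.
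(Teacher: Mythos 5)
Your overall setup matches the paper's: pass to the annular cover $\hat S=\tilde S/T_0$ associated to a twisted annulus $A_{i_0}$, observe that the lift $\hat f$ (extended to the compact annulus of ends) has rotation numbers $0$ and $r_{i_0}$ on the two added circles, and invoke a Poincar\'e--Birkhoff-type statement. But there is a genuine gap, and interestingly it is not quite where you locate it.

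You worry that the non-wandering condition fails to descend to $\hat f$ because the cover is non-normal, and you propose to compensate with an ``equivariant non-wandering'' condition and the full machinery of maximal isotopies and transverse foliations. In fact the paper never needs $\hat f$ to be non-wandering for the Poincar\'e--Birkhoff step. It uses Theorem~\ref{th:PB}, whose hypotheses are the existence of two invariant ergodic measures with distinct rotation numbers; these are supplied for free by the fixed points on the prime-end boundary circles, where the twist produces the gap $(0,r_{i_0})$. The non-wandering hypothesis enters elsewhere, through Proposition~\ref{prop:wandering}, which is deliberately formulated for arbitrary (not necessarily normal) covers.

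The real gap is that Theorem~\ref{th:PB} gives a \emph{dichotomy}, not an unconditional conclusion: either every intermediate rational rotation number is realized, or there is an essential simple loop $\hat\lambda\subset\hat S$ with $\hat f(\hat\lambda)\cap\hat\lambda=\emptyset$. You assert the first alternative without discussion, but the second one is perfectly possible a priori, and handling it is the bulk of the section. That is Proposition~\ref{prop:twisthorseshoe}, whose proof occupies most of Section~\ref{s.dehn}: first Lemma~\ref{lemma:intprop} shows (via Proposition~\ref{prop:wandering}, here is where non-wandering really matters) that $A_{i_0}$ does not separate $S$ and that $\hat\lambda$ descends to the cyclic cover $\check S$; then one sets up the order relation $\prec$ on lifts of $\hat\lambda$, uses the forcing Lemma~\ref{lemma:forcing} and Proposition~\ref{prop:connexions-general} inside the $\hat f$-invariant sub-annuli $\hat U_0,\hat U_1,\hat U_2$ and their prime-end compactifications, and finally applies the index-type fixed point criterion Proposition~\ref{prop:fixedpoint} to four cyclically ordered free lines to produce points $\tilde z_m$ with $\tilde f^m(\tilde z_m)=T_m(\tilde z_m)$ and hence periodic points of $f$ of period $m$. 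Even in the easy branch of the dichotomy there is a small additional argument you omit: one must rule out that the periodic points of $\hat f$ with rotation number $p_m/q_m$ all project onto a fixed point of $f$; the paper does this by taking $q_m$ prime and a compactness/limiting argument in Proposition~\ref{prop:essentialannular}. In short, you have the right first paragraph of the proof, but you are missing the hard case entirely, and the replacement you gesture at (transverse foliations in an equivariant non-wandering setting) is not what the paper does and is not obviously sufficient.
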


\begin{proposition} \label{prop:connexions} Let $S$ be an orientable closed surface of genus $g\geq2$ and $f$ a non wandering homeomorphism of $S$. If $f$ is isotopic to the identity, then $f$ has fixed points and
\begin{itemize}
\item either $f$ has periodic points of period arbitrarily large;

\item or every periodic point of $f$ is fixed and $f$ is isotopic to the identity relative to its fixed point set.
\end{itemize} 

\end{proposition}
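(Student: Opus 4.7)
The existence of fixed points is immediate from the Lefschetz fixed point theorem: since $f$ is isotopic to the identity, $L(f)=\chi(S)=2-2g<0$, hence $\mathrm{Fix}(f)\neq\emptyset$.

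For the dichotomy, the plan is to assume that $f$ has no periodic orbit of arbitrarily large period and prove that every periodic point is fixed and that $f$ is isotopic to the identity relative to $\mathrm{Fix}(f)$. The central tool is the theory of maximal isotopies (B\'eguin--Crovisier--Le Roux, Jaulent) combined with the equivariant foliated version of Brouwer's translation theorem (Le Calvez). First, choose a maximal isotopy $I=(f_t)_{t\in[0,1]}$ from $\mathrm{id}$ to $f$, let $X=\mathrm{Fix}(I)\subset\mathrm{Fix}(f)$, and associate to $I$ a singular oriented foliation $\cF$ on $S$ with singular set $X$ such that, for every $z\in S\setminus X$, the trajectory $t\mapsto f_t(z)$ is homotopic rel endpoints in $S\setminus X$ to a positively transverse arc $I_\cF(z)$. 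The non-wandering hypothesis ensures that for a dense set of $z\in S\setminus X$ the concatenation $I^{\Z}_\cF(z)=\ldots I_\cF(f^{-1}(z))\,I_\cF(z)\,I_\cF(f(z))\ldots$ is a recurrent, bi-infinite transverse trajectory.

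Now suppose either that there exists a periodic point of period $q\geq 2$, or that $f$ is not isotopic to the identity relative to $\mathrm{Fix}(f)$. In the first case, concatenating $q$ consecutive pieces of the transverse trajectory of such a point yields a closed transverse loop $\Gamma\subset S\setminus X$. In the second case there exists $z_0\in\mathrm{Fix}(f)\setminus X$; by maximality of $I$, the loop $I(z_0)$ is not contractible in $S\setminus X$ (otherwise one could extend the isotopy to fix $z_0$), and hence $I_\cF(z_0)$ is a non-trivial closed transverse loop. In both situations we produce a non-trivial closed transverse trajectory.

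The main obstacle is the last step: from the existence of such a loop, deduce periodic orbits of arbitrarily large period. This would be handled via the forcing theory of Le Calvez--Tal: if, after lifting to the universal cover of $S\setminus X$, the loop admits a positive $\cF$-transverse self-intersection (or an $\cF$-transverse intersection with the transverse trajectory of a recurrent point whose orbit accumulates on it), then $f$ carries a topological horseshoe, hence has periodic points of all sufficiently large periods. The work is to show that on a surface of genus $\geq 2$ a non-trivial transverse loop always generates, under the recurrence supplied by the non-wandering hypothesis, such an $\cF$-transverse self-intersection: in the hyperbolic universal cover, any two distinct deck translates of an essential loop meet transversally, and a density/recurrence argument is then used to propagate one of these geometric crossings into an $\cF$-transverse one. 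Granting this forcing step, the contrapositive delivers the desired conclusion: when $f$ has no orbit of arbitrarily large period we must have $\mathrm{Fix}(f)=X$, and $I$ then realises an isotopy from $\mathrm{id}$ to $f$ relative to $\mathrm{Fix}(f)$.
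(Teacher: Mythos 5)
Your overall structure (Lefschetz for fixed points, then produce a non-trivial closed transverse loop and force a horseshoe) is the natural one coming from the maximal-isotopy machinery, but it is not the route taken by the paper, and as written it has a real gap precisely at the step you flag as ``the work''.

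The paper does not build a maximal isotopy or a transverse foliation at all. It quotes \cite{Lec2} as a black box: for an isotopy-to-identity non-wandering map, either the periods are unbounded or every \emph{contractible} periodic point is fixed and $f$ is isotopic to the identity relative to the contractible fixed point set. What remains is to rule out non-contractible periodic points. Assuming $\tilde f^{q_0}(\tilde z_0)=T_0(\tilde z_0)$ with $T_0\neq\mathrm{Id}$, the paper descends to the annular cover $\hat S=\tilde S/T_0$, where the boundary rotation numbers vanish while $\hat z_0$ has rotation number $1/q_0$. Theorem~\ref{th:PB} then gives either a dense set of rational rotation numbers realized by periodic points (hence unbounded periods) or a free essential loop $\hat\lambda$. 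In the latter case, the paper uses the elementary free-line forcing of Section~\ref{s.forcing}, the fixed point criterion of Proposition~\ref{prop:fixedpoint}, and a prime-ends compactification with a careful two-case analysis ($\rho_1\neq 0$ versus $\rho_1=0$, the second case needing the periodic point $\tilde z_0$ itself to provide the twist) to produce fixed points of $\tilde f^m\circ T_m^{-1}$ with $T_m\in G$ and to show their projections have period tending to $\infty$.

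The gap in your plan is the passage from a non-trivial closed transverse loop to an $\cF$-transverse self-intersection. Geometric intersection of deck translates is not the same as $\cF$-transverse intersection, and the implication you sketch is in fact false in exactly the regime the proposition is trying to describe: for the ``translation-flow with stopping points'' examples on genus $\geq 2$, $f$ is non-wandering, isotopic to the identity, has finitely many fixed points, and the transverse loops of recurrent points are essential yet have no $\cF$-transverse self-intersection. Note also that when $z_0\in\mathrm{Fix}(f)\setminus X$, the loop $I_\cF(z_0)$ is non-contractible only in $S\setminus X$ and may well be contractible in $S$; in that case its lifts to the universal cover of $S$ are closed curves, so the ``distinct deck translates cross'' picture does not even apply. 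To make your forcing step work you would have to extract, from the contradiction hypothesis, an explicit twist that prevents the rotational picture — which is precisely what the paper's use of Theorem~\ref{th:PB} in the annular cover accomplishes. Finally, your plan silently re-derives the content of \cite{Lec2} for contractible periodic points, which the paper simply cites; this is itself a substantial omission.
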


\subsection{Plan of the article} 

Proposition \ref{prop:twist} is a kind of Poincar\'e-Birkhoff Theorem in surfaces of high genus. Its proof will be given in Section \ref{s.dehn}.  Proposition \ref{prop:connexions} tells us that, roughly speaking, a non wandering homeomorphism with finitely many periodic points is ``modelized'' by the time one map of the flow of a minimal direction for a translation surface, after adding stopping points and lifting to a cyclic finite covering.  Its proof will be given in Section \ref{s.isoidentty}. The proof of Theorem  \ref{th:rappel2}  will be given in Section \ref{s.torus}. A certain number of definitions and preliminary results will be given in Section \ref{s:preliminaries}, most of the results being well known. However we will state two ``new technical results'' in this section, a fixed point theorem and a forcing result, which are inspired by common works with Fabio Tal (\cite{LT1} and \cite{LT2}).

Section  \ref{s.dehn} and Section \ref{s.isoidentty} are the more technical parts of the article.  Surprisingly, the proofs of Proposition \ref{prop:twist}  and  \ref{prop:connexions}  are very similar. Let us give more details about the proof of Proposition  \ref{prop:twist}. In a recent  work with Mart\'in Sambarino \cite{LS}, we have proved that a generic symplectic diffeomorphism of a surface of genus $\geq 2$, for the $C^k$-topology, $k\geq 1$, has transverse homoclinic intersections. A crucial argument is to prove that a generic symplectic diffeomorphism of a surface of genus $\geq 2$ has more than $2g-2$ periodic points. The case of diffeomorphisms isotopic to a Dehn twist is studied in a long section of the article. We note that such a diffeomorphism $f$ has a lift $\hat f$ to a certain annular covering space $\hat S$ that satisfies a ``twist condition'' and so, if $f$ has finitely many periodic points, then $\hat f$ cannot satisfy the ``intersection property'': there exists an essential simple loop $\hat \lambda\subset\hat S$ that is disjoint from its image by $\hat f$. Lifting $\hat f$ to a diffeomorphism $\tilde f$ of the universal covering space $\tilde S$, looking at the action of the dynamics of $\tilde f$ on the set of lifts of $\hat \lambda$ and using the properties of the stable and unstable manifolds of the fixed saddle points (there exists at least $2g-2$ such points)  we succeed to prove that homoclinic intersections exist. What is done in the present article is to show that the existence of infinitely many periodic points can be obtained without using the saddle points. Looking at the dynamics of $\tilde f$ on the set of lifts of $\hat \lambda$  is sufficient if we use the fixed point theorem and the forcing result stated in Section \ref{s:preliminaries}. In particular the proof is  valid for area preserving homeomorphisms, or more generally for non wandering homeomorphisms. 

To conclude this introduction, let us state some other known results about the dynamics of a non wandering homeomorphism $f$ with finitely many periodic points on a surface $S$ of genus $\geq 2$.  Theorem \ref{th:main} tells us that it is sufficient to look at the case where $f$ is isotopic to the identity relative to its fixed point set and has no other periodic point. A result of Lellouch \cite{Lel} says that if $\mu_1$ and $\mu_2$ are two invariant Borel probability measures, then the rotation vectors of $\mu_1$ and $\mu_2$ do not intersect (for the canonical intersection form $\wedge$ on $H_1(S,\R)$). Another result, that can be found in \cite{LS}, is the existence, in the case where $f$ has finitely many fixed points, of a section in the following sense: there exists a simple oriented loop $\lambda\subset S$ non homologous to zero, such that if $\check S$ is the infinite cyclic covering space of $S$ associated to $\lambda$ and $\check f$ the natural lift of $f$ to $\check S$, then for every loop $\check \lambda \subset \hat S$ that lifts $\lambda$, the points that are on $\check \lambda$ and not fixed by $\check f$ are sent by $\check f$ on the left of $\check \lambda$ and by $\check f^{-1}$ on its right. It would be a natural challenge to look for further dynamical properties of these maps (what looks like the rotation set? does there always exist a section if $f$ is not the identity?)

\section{Definitions, notations and preliminaries} \label{s:preliminaries}

\subsection{Loops and paths}  Let $S$ be an orientable connected surface (not necessarily closed, not necessarily boundaryless).  A {\it loop} of $S$ is a continuous map $\gamma: \T\to S$.  It will be called {\it essential} if it is not homotopic to a constant loop. A {\it path} of $S$ is a continuous map $\gamma: I\to S$ where $I\subset \R$ is an interval. A loop or a path will be called 
{\it simple} if it is injective. A {\it segment} is a simple path $\sigma:[a,b]\to X$, where $a<b$. The points $\sigma(a)$ and $\sigma(b)$ are the {\it ends} of $\sigma$ and the set $\sigma((a,b))$ its {\it interior}. We will say that $\sigma$ {\it joins} $\sigma(a)$ to $\sigma(b)$. More generally if $A$ and $B$ are disjoints, we will say that $\sigma$ joins $A$ to $B$, if $\sigma(a)\in A$ and $\sigma(b)\in B$. A {\it line} is a proper simple path $\lambda:\R\to S$, a half line a proper simple path $\lambda:I\to S$, where $I=[a,+\infty)$ or $I=(-\infty,a]$. In that case, $\gamma(a)$ is its end. As it is usually done we will use the same name and the same notation to refer to the image of a loop or a path $\gamma$. 

Note that a simple loop or a simple path is naturally oriented. If $\gamma$ is a simple loop that separates  $S$ (meaning that its complement has two connected components) the one that is located on the right of $\gamma$ will be  denoted $R(\gamma)$ and the other one $L(\gamma)$. We will use the same notation for a line that separates $S$, in particular for a line of $\R^2$.

 Let $f$ be an orientation preserving homeomorphism of $\R^2$.  A {\it Brouwer line} of $f$ is a line $\lambda\subset\R^2$ such that $f(\lambda)\subset L(\lambda)$ and $f^{-1}(\lambda)\subset  R(\lambda)$. Equivalently it means that $f(\overline {L(\lambda)})\subset L(\lambda)$ or that $f^{-1}(\overline {R(\lambda)})\subset R(\lambda)$.

\subsection{Homeomorphisms of hyperbolic surfaces}

Let $S$ be a connected closed  orientable  surface of genus $g\geq2$.
Furnishing $S$ with a Riemannian metric of constant negative curvature, one can suppose that the universal covering space of $S$ is the disk $\D=\{z\in \C\,\vert\, \vert z\vert <1\}$ and that the group of covering transformations, denoted $G$, is composed of  M\H obius automorphisms of $\D$. Every element $T\in G$ is hyperbolic: it can be extended to a homeomorphism of $\overline{\D}=\{z\in \C\,\vert\, \vert z\vert \leq1\}$ having two fixed points, both on the boundary: a repelling fixed point $\alpha(T)$ and an attracting fixed point $\omega(T)$. For every $z\in \overline{\D}\setminus\{\alpha(T), \omega(T)\}$, it holds that
 $$\lim_{k\to -\infty} T^k(z) =\alpha(T), \lim_{k\to +\infty} T^k(z) =\omega(T).$$
 We define a {\it $T$-line} to be a line of $\D$ invariant by $T$ and oriented in such a way that it can be extended to a segment of $\overline{\D}$ that joins $\alpha(T)$ to $\omega(T)$. 
 
 It is well known that a homeomorphism $\tilde f$ of $\D$ that lifts a homeomorphism $f$ of $S$ can be extended to a homeomorphism of  $\overline{\D}=\{z\in \C\,\vert\, \vert z\vert \leq1\}$. If $[\tilde f]$ is the automorphism of $G$ defined by the relation:
 $$\tilde f(T(z))=[\tilde f](T)(\tilde f(z)), \enskip \mathrm{for \enskip all} \enskip z\in \D,$$ then the extension of $\tilde f$ satisfies
 $$\tilde f(\alpha(T))=\alpha([\tilde f](T)) \enskip \mathrm{and}\enskip  \tilde f(\omega(T))=\omega([\tilde f](T)) \enskip  \enskip \mathrm{for \enskip all} \enskip T\in G.$$ 
 If $h$ is a homeomorphism of $S$ that is isotopic to $f$, then every isotopy from $f$ to $h$ can be lifted to an isotopy from $\tilde f$ to a certain lift $\tilde h$ of $h$. The lift $\tilde h$ does not depend on the initial isotopy and has the same extension as $\tilde f$ on the boundary circle, because the automorphisms $[\tilde f] $ and $[\tilde h]$ coincide. For conveniency, we will write $\tilde S$ for the universal covering space of $S$ and $\partial \tilde S$ for its boundary defined via the identification of $\tilde S$ with $\D$. We will write $\overline{\tilde S}$=$\tilde S\cup \partial(\tilde S)$.
 
  \subsection{Non wandering homeomorphisms}\label{s:NWH}
  Let us recall some very classical easy results that we will use in the article. Recall that if $f$ is a homeomorphism of a surface $S$, a  point $z\in S$ is recurrent if there exists a subsequence of $(f^n(z))_{n\geq 0}$ that converges to $z$. 
  
  \begin{lemma}  \label{l:NWH} Let $f$ be a non wandering homeomorphism of a surface $S$. For every non empty open set $U$ and every $q\geq 1$, there exists an increasing sequence $(n_i)_{0\leq i\leq q}$ in $\N$, satisfying $n_0=0$, such that $\bigcap_{0\leq i\leq q} f^{-n_i}(U)\not=\emptyset$.
    \end{lemma}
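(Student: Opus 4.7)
The plan is to proceed by induction on $q$, where the base case $q=1$ is essentially the definition of non-wandering and the inductive step amounts to applying that definition again to the open set obtained at the previous stage.

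For the base case $q = 1$, we need $n_0 = 0 < n_1$ with $U \cap f^{-n_1}(U) \neq \emptyset$. This is precisely the statement that $U$ is not wandering, which is guaranteed by hypothesis: some $n_1 \geq 1$ satisfies $U \cap f^{-n_1}(U) \neq \emptyset$.

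For the inductive step, suppose that for some $q \geq 1$ we have produced an increasing sequence $0 = n_0 < n_1 < \dots < n_q$ with
\[
V := \bigcap_{0 \leq i \leq q} f^{-n_i}(U) \neq \emptyset.
\]
Because $f$ is continuous, each $f^{-n_i}(U)$ is open, so $V$ is a non empty open set. Applying the non-wandering hypothesis to $V$ yields some integer $m \geq 1$ with $V \cap f^{-m}(V) \neq \emptyset$. Pick a point $z$ in this intersection. By definition of $V$, the condition $z \in V$ gives $f^{n_i}(z) \in U$ for every $0 \leq i \leq q$, while $f^m(z) \in V$ gives $f^{n_i + m}(z) \in U$ for every $0 \leq i \leq q$. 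Setting $n_{q+1} := n_q + m$, which is strictly larger than $n_q$ since $m \geq 1$, we obtain $f^{n_{q+1}}(z) \in U$ in addition, and therefore $z \in \bigcap_{0 \leq i \leq q+1} f^{-n_i}(U)$.

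There is really no obstacle here; the only thing one has to notice is that the set $V$ built at stage $q$ is itself open and non empty, so the non-wandering hypothesis can be reapplied to $V$ in order to extend the sequence by one further return time. Iterating this procedure $q$ times starting from $V_0 = U$ produces the desired sequence.
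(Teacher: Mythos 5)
Your proof is correct and takes essentially the same route as the paper: induct on $q$, observe that $V=\bigcap_{0\leq i\leq q} f^{-n_i}(U)$ is open and non empty, apply the non-wandering hypothesis to $V$ to get a return time $m\geq 1$, and append $n_{q+1}=n_q+m$. You have merely unwound the point-level argument (choosing $z\in V\cap f^{-m}(V)$) that the paper leaves implicit.
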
 
  
  \begin{proof} Let us prove the lemma by induction. By definition of a non wandering homeomorphism, the lemma is true for $q=1$. Suppose that the lemma is true for every $q'<q$, where $q\geq 2$. Let $U\subset S$ be a non empty open set. There exists an increasing sequence $(n_i)_{0\leq i<q}$ in $\N$, satisfying $n_0=0$, such that $\bigcap_{0\leq i<q} f^{-n_i}(U)\not=\emptyset$. 
 As $V=\bigcap_{0\leq i<q} f^{-n_i}(U)$ is open and non empty, there exists $n>0$ such that $V\cap f^{-n}(V)\not=\emptyset$. In particular, it holds that $\bigcap_{0\leq i<q} f^{-n_i}(U)\not=\emptyset$, where $n_q= n_{q-1}+n$. So, the lemma is true for $q$.
 \end{proof}

  \begin{proposition} \label{prop:NWH}  Let $f$ be a non wandering homeomorphism of a surface $S$. Then:
  
  \begin{enumerate}

  \item every power $f^k$, $k\in\Z$, is non wandering;
  
  \item if $\hat S$ is a finite covering of  $S$, every lift of $f$ to $\hat S$ is non wandering;
  
  \item the set of recurrent points is a dense $G_{\delta}$ set.
  
  \end{enumerate}
  \end{proposition}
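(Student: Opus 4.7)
The plan is to handle the three assertions separately: (1) reduces to Lemma \ref{l:NWH} via pigeonhole on residues, (2) requires an equivariance trick on a suitable power of $\hat f$, and (3) is a standard Baire-category argument.

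For assertion (1), I would take $k \geq 1$ and a nonempty open $U \subset S$, apply Lemma \ref{l:NWH} with $q = k$ to obtain integers $0 = n_0 < n_1 < \cdots < n_k$ with $\bigcap_{i \leq k} f^{-n_i}(U) \neq \emptyset$, and pigeonhole on the residues modulo $k$ of the $k+1$ integers $n_0,\dots,n_k$ to find $i < j$ with $n_j - n_i = mk$ for some $m \geq 1$; any point $z$ in the intersection then satisfies $f^{n_i}(z) \in U \cap (f^k)^{-m}(U)$. The case $k = 0$ is trivial. For $k \leq -1$ I would first check that $f^{-1}$ is non-wandering (given an open $U$, the non-wandering property of $f$ produces $w \in U \cap f^{-n}(U)$, and then $f^n(w) \in U \cap (f^{-1})^{-n}(U)$), and then apply the positive case to $f^{-1}$.

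For assertion (2), the main obstacle is that a direct pigeonhole applied to $\hat U$ only forces recurrence in \emph{some} lift of $V := \pi(\hat V)$ (where $\hat V \subset \hat U$ is small and evenly covered), not in $\hat V$ itself. I would overcome this by replacing $\hat f$ with an equivariant power. First assume the cover is regular (passing to the Galois closure if necessary), with finite deck group $\Gamma$ of order $d$; the induced automorphism $[\hat f] \in \mathrm{Aut}(\Gamma)$ has finite order $k \geq 1$ in the finite group $\mathrm{Aut}(\Gamma)$, so $\hat f^k$ commutes with every element of $\Gamma$. Choose $\hat V \subset \hat U$ so that the translates $\{T(\hat V)\}_{T \in \Gamma}$ are pairwise disjoint, apply Lemma \ref{l:NWH} to $V = \pi(\hat V)$ for the (non-wandering by (1)) map $f^k$ with $q = d$, and lift the resulting point $z \in \bigcap_i (f^k)^{-n_i}(V)$ to $\hat z \in \hat V$. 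Then there are unique $T_i \in \Gamma$ with $\hat f^{kn_i}(\hat z) \in T_i(\hat V)$, and $T_0 = \mathrm{id}$. Pigeonhole on the $d+1$ elements $T_0, \ldots, T_d \in \Gamma$ yields $0 \leq i < j \leq d$ with $T_i = T_j$; hence $T_i(\hat V) \cap (\hat f^k)^{-(n_j - n_i)}(T_i(\hat V)) \neq \emptyset$, and the equivariance $\hat f^k \circ T = T \circ \hat f^k$ transfers this to $\hat V \cap (\hat f^k)^{-(n_j - n_i)}(\hat V) \neq \emptyset$. Thus $\hat f^k$ is non-wandering, and so is $\hat f$ (by the easy converse of (1)).

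For assertion (3), I would fix a countable basis $\{U_i\}_{i \in \N}$ for the topology of $S$. A standard argument (trivial for periodic points, and otherwise obtained by excising the initial orbit segment from a neighborhood) shows that the set of non-recurrent points equals $\bigcup_i W_i$, where $W_i := U_i \setminus \bigcup_{n \geq 1} f^{-n}(U_i)$. Each $W_i$ is the intersection of an open set with a closed set, hence $F_\sigma$ in the metrizable surface $S$. Moreover, $W_i$ has empty interior, since any nonempty open $V \subset W_i$ would satisfy $V \subset U_i$ and $V \cap f^{-n}(U_i) = \emptyset$ for all $n \geq 1$, forcing $V \cap f^{-n}(V) = \emptyset$ and contradicting the non-wandering hypothesis. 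Thus each $W_i$ is meager, the set of non-recurrent points is meager, and the recurrent set is a dense $G_\delta$ by Baire's theorem.
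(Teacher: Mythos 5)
Your proof is correct, and for assertion~(2) it takes a genuinely different route from the paper's, which is worth spelling out. For~(1) your argument --- reduce to $k\geq 1$ via $f^{-1}$, apply Lemma~\ref{l:NWH} with $q=k$, pigeonhole on residues modulo $k$ --- is exactly what the paper does. For~(2) the paper does \emph{not} pass to the Galois closure: it fixes an evenly-covered disk $U$ with $\hat\pi^{-1}(U)=\bigsqcup_{1\leq j\leq r}\hat U_j$, applies Lemma~\ref{l:NWH} to $U$ with $q=r!$ to get $z\in\bigcap_i f^{-n_i}(U)$, records for each time $n_i$ the permutation $\sigma_i\in\mathcal S_r$ defined by $\hat f^{n_i}(\hat z_j)\in\hat U_{\sigma_i(j)}$ (where $\hat z_j$ is the lift of $z$ in $\hat U_j$), pigeonholes over the $r!$ permutations to find $i_0<i_1$ with $\sigma_{i_0}=\sigma_{i_1}$, and concludes that $n:=n_{i_1}-n_{i_0}$ witnesses recurrence of \emph{every} $\hat U_j$ at once. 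This works directly for an arbitrary finite cover, with no regularity assumption, no passage to a larger cover, and no descent step. Your Galois-closure argument is also valid --- the closure of a finite cover is a finite cover, the induced $[\hat f]\in\mathrm{Aut}(\Gamma)$ has finite order $k$ so that $\hat f^k$ is $\Gamma$-equivariant, you pigeonhole over the $d$ deck transformations rather than over $r!$ permutations, and the descent from the closure back to $\hat S$ is a one-line projection --- but it carries more machinery. The trade is that your approach makes the equivariance structurally explicit at the price of enlarging the cover, whereas the paper's permutation bookkeeping is shorter and cover-agnostic. For~(3) the paper uses the open dense sets $O_{m,q}=\{x:\exists n\geq q,\ d(f^n(x),x)<1/m\}$, and needs Lemma~\ref{l:NWH} precisely to guarantee returns after time $q$; your formulation with $W_i=U_i\setminus\bigcup_{n\geq1}f^{-n}(U_i)$ over a countable basis replaces this by the initial-orbit-excision observation, so it needs only the bare non-wandering definition (to show each $W_i$ has empty interior) plus the standard fact that an $F_\sigma$ set with empty interior is meager. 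Both are Baire-category arguments of the same shape; yours is marginally leaner.
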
 
  
  \begin{proof} By definition, $f$ is non wandering if and only $f^{-1}$ is non wandering. Moreover the identity is non wandering. So, to prove (1) it is sufficient to prove it for $k\geq 2$. Let $U\subset S$ be a non empty open set. By Lemma \ref {l:NWH}, there exists an increasing sequence $(n_i)_{0\leq i\leq k}$ in $\N$, satisfying $n_0=0$, such that $\bigcap_{0\leq i\leq k} f^{-n_i}(U)\not=\emptyset$. There exists $i_0<i_1$ such that $n_{i_1}-n_{i_0}\in k\Z$. Write $n_{i_1}-n_{i_0}=nk$, where $n>0$. It holds that
  $U\cap f^{-nk} (U)=f^{n_{i_0}}(f^{-n_{i_0}}(U)\cap f^{-n_{i_1}}(U))\not=\emptyset$. So $U$ is a non wandering open set of $f^k$.
  
  Let us prove (2). Suppose that $\hat S$ is a $r$-cover of $S$ and denote $\hat \pi:  \hat S\to S$ the covering projection. To prove that $\hat f$ is non wandering, it is sufficient to prove that if $U\subset S$ is an open disk such that $\hat\pi^{-1}(U)=\bigsqcup_{1\leq j\leq r} \hat U_j$, where each $\hat U_j$ is mapped homeomorphically onto $U$ by $\hat \pi$, then every  $\hat U_j$ is non wandering. By Lemma \ref {l:NWH}, there exists an increasing sequence $(n_i)_{0\leq i\leq r!}$ in $\N$, satisfying $n_0=0$, such that $\bigcap_{0\leq i\leq r!} f^{-n_i}(U)\not=\emptyset$. Let us choose $z\in \bigcap_{0\leq i\leq r!} f^{-n_i}(U)$ and denote $\hat z_j$ the preimage of $z$ belonging to $\hat U_j$. For every $i\in\{0,\dots, r!\}$ denote $\sigma_i\in\mathcal {S}_r$ the permutation such that $\hat f^{n_i}(\hat z_j)\in \hat U_{\sigma_i(j)}$. There exists $i_0<i_1$ such that $\sigma_{i_0}=\sigma_{i_1}$. Setting $n_{i_1}-n_{i_0}=n>0$, one deduces that $\hat U_j\cap \hat f^{-n}(\hat U_j)\not=\emptyset$, for every $j\in \{1,\dots, r\}$. 
  
  To prove (3) furnish $S$ with a distance $d$. If $m\geq 1$ and $q\geq 1$, define
  $$O_{m,q}=\{x \in S, \enskip \exists n\geq q, \enskip d(f^n(x), x)<{1\over m}\}.$$ Applying Lemma  \ref{l:NWH} to a ball $B(z,\varepsilon)$, $\varepsilon<1/2m$, and to $q$, we obtain that $O_{m,q}\cap B(z,\varepsilon)\not=\emptyset$. So $O_{m,q}$ is a dense open set. 
  Noting that $\bigcap _{n\geq 1, q\geq 1} O_{m,q}$ is the set of recurrent points and that $S$ is a Baire space, we can conclude.
  \end{proof}

We will often use the following result:

\begin{proposition} \label{prop:wandering} Let $f$ be a non wandering homeomorphism of a surface $S$ and $\hat f$ a lift of $f$ to a covering space $\hat S$.  If $\hat U$ is a non empty wandering open set of $\hat f$, then $\bigcup_{n\geq 0}\hat f^n(\hat U)$ and $\bigcup_{n\geq 0}\hat f^{-n}(\hat U)$ are not relatively compact.

  \end{proposition}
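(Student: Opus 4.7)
The plan is to argue by contradiction: suppose that $\hat W^+:=\bigcup_{n\geq 0}\hat f^n(\hat U)$ has compact closure $\hat K$. The strategy is to build an $\hat f$-invariant, locally finite Borel measure $\hat\nu$ on $\hat S$ with $\hat\nu(\hat U)>0$. Once such $\hat\nu$ is available, the contradiction is immediate: the sets $\hat f^n(\hat U)$, $n\geq 0$, are pairwise disjoint (because $\hat U$ is wandering), all contained in $\hat K$, and each has mass $\hat\nu(\hat U)$ by $\hat f$-invariance, so
\[
\hat\nu(\hat K)\;\geq\;\sum_{n\geq 0}\hat\nu\bigl(\hat f^n(\hat U)\bigr)\;=\;+\infty,
\]
contradicting the finiteness of $\hat\nu$ on the compact set $\hat K$. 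The statement for $\bigcup_{n\geq 0}\hat f^{-n}(\hat U)$ then follows by applying the same argument to $\hat f^{-1}$, a lift of the non wandering homeomorphism $f^{-1}$ (Proposition~\ref{prop:NWH}(1)).

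To construct $\hat\nu$, I first shrink $\hat U$ (preserving wanderingness and the relative compactness of its forward orbit) so that $\pi$ restricts to a homeomorphism from $\hat U$ onto an evenly covered open set $U:=\pi(\hat U)\subset S$, with $\hat U$ being one sheet of $\pi^{-1}(U)$. The key ingredient is then to produce an $f$-invariant Borel probability $\nu$ on $S$ with $\nu(U)>0$; this uses the standard fact that for a non wandering homeomorphism of a compact metric space, the union of the supports of the $f$-invariant probabilities is dense in the non wandering set, so it meets the non-empty open set $U\subset S=\Omega(f)$. Then I lift $\nu$ to $\hat S$ by pullback along $\pi$: set $\hat\nu(A):=\nu(\pi(A))$ whenever $\pi$ is injective on $A$, which extends uniquely to a deck-invariant locally finite Borel measure on $\hat S$. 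Because $\hat f$ normalizes the deck group (two lifts of $f$ differ by a deck transformation) and pushes $\hat\nu$ to a deck-invariant locally finite lift of $\nu$, which is unique, $\hat\nu$ is $\hat f$-invariant; moreover $\hat\nu(\hat U)=\nu(U)>0$ since $\pi|_{\hat U}$ is a homeomorphism onto $U$.

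The main obstacle is the measure-theoretic input, namely the existence of $\nu$ with $\nu(U)>0$. A purely combinatorial alternative uses proper discontinuity of the deck group and compactness of $\hat K$ (so that only finitely many sheets of $\pi^{-1}(U)$ meet $\hat K$) together with Lemma~\ref{l:NWH} and a pigeonhole argument: this produces two iterates of a common orbit lying in the same sheet, hence a self-return of some deck translate $T\hat U$ of $\hat U$. However, converting such a translated self-return into a genuine self-return of $\hat U$ itself (contradicting the wandering of $\hat U$) would require controlling the cocycle $m\mapsto[\hat f]^m(T)$ on the finite set $\mathcal T$ of deck transformations moving $\hat U$ into $\hat K$, and is noticeably less clean than the measure-theoretic route sketched above.
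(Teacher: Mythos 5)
Your strategy is genuinely different from the paper's: you try to produce an $\hat f$-invariant locally finite measure giving positive mass to $\hat U$ and derive a contradiction from $\sigma$-additivity, whereas the paper runs a Baire category argument (the set of recurrent points of $f$ is a dense $G_\delta$, and $\hat U$ gets trapped in a meager set built from the frontier of $\bigcup_{n\geq 0}\hat f^n(\hat U)$ together with the pullback of the non-recurrent points). The paper even flags, in the remark following the proposition, that the measure-theoretic route is the ``obvious'' one when $f$ preserves a totally supported finite measure, and that the point of the Baire argument is precisely to cover the general non wandering case where no such measure is available.

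Unfortunately your key input is not a standard fact and is in general false: a non wandering homeomorphism of a compact metric space need \emph{not} have the union of the supports of its invariant probabilities dense. What is true by Baire (and is Proposition~\ref{prop:NWH}(3)) is that the \emph{recurrent points} are dense; but the measure center can be strictly smaller. A concrete counterexample: let $B_0=1$, $B_{n+1}=B_n0^{3^n}B_n$, let $\omega\in\{0,1\}^{\Z}$ be the sequence with $\omega_{[0,3^n)}=B_n$ for all $n$ and $\omega_k=0$ for $k<0$, and let $Y$ be the orbit closure of $\omega$ under the shift. One checks that $\omega$ is forward recurrent (the central word $\omega_{[-n,n]}$ recurs at positions $2\cdot 3^k-n$), hence $Y$ is a non wandering subshift; on the other hand the $1$'s of $\omega$ occupy positions whose base-$3$ digits lie in $\{0,2\}$, so any window of length $N$ contains $O(N^{\log_3 2})$ ones, uniformly over $Y$, and therefore every shift-invariant probability on $Y$ gives mass $0$ to the non-empty open cylinder $\{y:y_0=1\}$. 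So the step ``choose $\nu$ with $\nu(U)>0$'' has no justification in the non wandering setting; it would need the very measure-preservation hypothesis the proposition is designed to avoid. A secondary, fixable issue: your argument for $\hat f$-invariance of $\hat\nu$ via the deck group does not apply as stated, because the covering spaces to which the proposition is applied are not always normal (see the footnote at the end of the paper's proof); you would instead argue directly that the local pullback of an $f$-invariant $\nu$ under $\pi$ is $\hat f$-invariant, using $\pi\circ\hat f=f\circ\pi$. Your ``combinatorial alternative'' is closer in spirit to what is actually needed, but as you note it is not carried out; the paper's Baire category argument is the device that makes it work.
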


\begin{proof} Of course it is sufficient to prove that $\bigcup_{n\geq 0}\hat f^n(\hat U)$ is not relatively compact. We will argue by contradiction and suppose that $\bigcup_{n\geq 0}\hat f^n(\hat U)$ is relatively compact. The frontier $\mathrm{fr}\left(\bigcup_{n\geq 0}\hat f^n(\hat U)\right)$ of $\bigcup_{n\geq 0}\hat f^n(\hat U)$ is a compact set with empty interior. The covering map $\hat\pi:\hat S\to S$ being continuous, $\hat\pi\left (\mathrm{fr}\left(\bigcup_{n\geq 0}\hat f^n(\hat U)\right)\right)$ is compact. The map $\hat\pi$ being a local homeomorphism, $\hat\pi\left (\mathrm{fr}\left(\bigcup_{n\geq 0}\hat f^n(\hat U)\right)\right)$ has empty interior.  Indeed, one can cover $\mathrm{fr}\left(\bigcup_{n\geq 0}\hat f^n(\hat U)\right)$ with finitely many open sets that are sent homeomorphically  by $\hat \pi$ on their image. The map $\hat \pi$, being continuous and open, $\hat\pi^{-1} \left(\hat\pi\left (\mathrm{fr}\left(\bigcup_{n\geq 0}\hat f^n(\hat U)\right)\right)\right)$ is a closed set of $\hat S$ with empty interior.  Denote $\mathrm{rec}(f)$ the set of recurrent points of $f$, which is a dense $G_{\delta}$ set. The map $\hat \pi$ being continuous and open, $\hat\pi^{-1}(S\setminus \mathrm{rec}(f))$ is a $F_{\sigma}$ set with empty interior. The contradiction comes from the fact that $\hat U$ is contained in the union of  $\hat\pi^{-1} \left(\hat\pi\left (\mathrm{fr}\left(\bigcup_{n\geq 0}\hat f^n(\hat U)\right)\right)\right)$ and $\hat\pi^{-1}(S\setminus \mathrm{rec}(f))$. Indeed, suppose that the image by $\hat \pi$ of $\hat z\in U$ is recurrent. The closure of  $\bigcup_{n\geq 0}\hat f^n(\hat U)$ contains finitely many preimages of $\hat\pi(\hat z)$. At least one of them belongs to $\omega(\hat z)$, meaning that  it is a limit of  a subsequence of $(\hat f^n(\hat z))_{n\geq 0}$. It cannot belong to $\bigcup_{n\geq 0}\hat f^n(\hat U)$ because $\hat U$ is wandering  and so is on the frontier.  \footnote{In the case where $\hat S$ is a normal covering space and $\hat f$ commutes with the covering automorphisms, a much simpler proof can be given. Unfortunately, this will not be always the case when we will apply  Proposition \ref{prop:wandering}.}

\end{proof}

\begin{remark*} In case $f$ preserves a totally supported finite measure $\mu$, the results above are obvious. Indeed, in Proposition \ref{prop:NWH}, the maps $f^k$ preserve $\mu$ and $\hat f$ preserves the lift of $\mu$ that is a totally supported finite measure. Moreover, as a consequence of Poincar\'e's  Recurrence Theorem, it is known that almost every point is recurrent, and so the set of recurrent points is dense. In Proposition \ref{prop:wandering}, $\hat f$ preserves a totally supported locally finite measure.
\end{remark*}

 \subsection{Poincar\'e-Birkhoff Theorem }\label{s:PB}

\medskip
 We suppose in this subsection that $I$ is a non trivial interval of $\R$. We define the annuli $\A=\T\times I$ and $\mathrm{int}(\A)$, where $\mathrm{int}(\A)$ is obtained from $\A$ by taking out the possible boundary circles. Writing $\tilde \A=\R\times I$ for the universal covering space of $\A$,  we define the covering projection  $\pi: \tilde\A\to \A,\enskip  (x,y)\mapsto (x+\Z,y) $ and the generating covering automorphism $T:\tilde\A\to \tilde \A,\enskip (x,y)\mapsto (x+1,y)$ .

Let $f$ be a homeomorphism of $\A$  isotopic to the identity (meaning orientation preserving and fixing the possible ends or boundary circles) and $\tilde f$ a lift of $f$ to $\tilde \A$. Let us recall the definition of the rotation number of a compactly supported Borel probability measure $\mu$ invariant by $f$. Denote $p_1: \tilde \A\to\R$ the projection on the first factor. The maps $\tilde f$ and $T$ commute, so $p_1\circ \tilde f-p_1$ lifts a continuous function $\psi_{\tilde f} : \A\to\R$.  The {\it rotation number} $ \mathrm{rot}_{\tilde f} (\mu)=\int_{\A} \psi_{\tilde f} \, d\mu\in\R$ measures the mean horizontal displacement of $\tilde f$. 
 
If $z$ is a periodic point of $f$ of period $q$ and if $\tilde z$ is a lift of $z$ in $\tilde \A$, then there exists an integer $p$, independent of the choice of $\tilde z$, such that $\tilde f^q(\tilde z)=T^p(\tilde z)$. We will say that $p/q$ is the rotation number of $z$ for the lift $\tilde f$, it coincides with the rotation vector of the equidistributed measure supported on the orbit of $z$. 
 
We will use many times the following extension of the classical Poincar\'e-Birkhoff Theorem (see \cite{Lec1}):

\begin{theorem} \label{th:PB} Let $f$ be a homeomorphism of $\A$  isotopic to the identity and $\tilde f$ a lift of $f$ to $\tilde \A$. We suppose that there exist two invariant ergodic compactly supported Borel probability measures $\mu_1$ and $\mu_2$, such that  $\mathrm{rot}_{\tilde f} (\mu_1)<\mathrm{rot}_{\tilde f} (\mu_2)$. Then: 
\begin{itemize} 
\item either, for every rational number $p/q\in (\mathrm{rot}_{\tilde f} (\mu_1),\mathrm{rot}_{\tilde f} (\mu_2))$, written in an irreducible way, there exists a periodic point $z$ of $f$ of period $q$ and rotation number $p/q$ for $\tilde f$;
\item or there exists an essential simple loop $\lambda\in \mathrm{int}(\A)$ such that $f(\lambda)\cap\lambda=\emptyset$.

\end{itemize}

\end{theorem}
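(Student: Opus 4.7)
The approach is the equivariant foliated Brouwer translation machinery of \cite{Lec1}. Fix an irreducible rational $p/q\in(\mathrm{rot}_{\tilde f}(\mu_1),\mathrm{rot}_{\tilde f}(\mu_2))$ and consider $\tilde g:=T^{-p}\circ\tilde f^q:\tilde\A\to\tilde\A$, which is a lift of $f^q$. A fixed point of $\tilde g$ projects to an $f$-periodic point whose rotation number for $\tilde f$ equals $p/q$, with minimal period exactly $q$ because $\gcd(p,q)=1$. Assume toward contradiction that no such periodic point exists, i.e., that $\tilde g$ is fixed-point free. Passing to $f^q$-ergodic components of $\mu_1$ and $\mu_2$ if necessary, one may assume that each is $f^q$-ergodic and that $\mathrm{rot}_{\tilde g}(\mu_1)=q\,\mathrm{rot}_{\tilde f}(\mu_1)-p<0<q\,\mathrm{rot}_{\tilde f}(\mu_2)-p=\mathrm{rot}_{\tilde g}(\mu_2)$.

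Next, apply the equivariant version of Brouwer's translation theorem to a maximal identity isotopy of $f^q$: this produces an oriented topological foliation $\mathcal F$ of $\A$, positively transverse to every trajectory of the isotopy, together with a deck-equivariant lift $\tilde{\mathcal F}$ to $\tilde\A$ by properly embedded lines, each separating $\tilde\A$ into a left and a right half-plane compatible with the isotopy direction. In parallel, Birkhoff's ergodic theorem applied to the horizontal-displacement cocycle $\psi_{\tilde g}$ yields, for $\mu_1$-almost every $z$ and any lift $\tilde z$, the divergence $p_1(\tilde g^n(\tilde z))-p_1(\tilde z)\to-\infty$, and correspondingly $+\infty$ for $\mu_2$-almost every $z$. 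Combining with Poincar\'e recurrence for the $\mu_i$, one obtains recurrent points $z_1,z_2$ for $f^q$ whose $\tilde g$-orbits drift in opposite horizontal directions on $\tilde\A$.

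The forcing step is the core of the argument. The $\tilde{\mathcal F}$-transverse trajectories of $\tilde z_1$ and $\tilde z_2$ cross arbitrarily many leaves of $\tilde{\mathcal F}$ in opposite senses while projecting to recurrent orbits on $\A$. A forcing lemma for transverse trajectories then yields either a closed transverse curve, which through Brouwer's translation theorem on $\tilde\A$ would furnish a fixed point of $\tilde g$ and contradict our hypothesis, or a closed leaf of $\mathcal F$. Such a closed leaf is an essential simple loop $\lambda^\ast\subset\mathrm{int}(\A)$ disjoint from its $f^q$-image, because the isotopy is positively transverse to $\mathcal F$ and hence pushes every point of $\lambda^\ast$ strictly to one side. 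A short combinatorial argument on the cyclic family $\{f^i(\lambda^\ast)\}_{0\le i<q}$, exploiting the linear order of pairwise disjoint essential simple loops in the annulus, then produces an essential simple loop $\lambda\subset\mathrm{int}(\A)$ with $f(\lambda)\cap\lambda=\emptyset$, yielding the second alternative.

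The principal obstacle is the forcing step itself: translating the opposite-direction ergodic drifts of two recurrent orbits into the clean dichotomy between a fixed point of $\tilde g$ and an essential closed leaf of $\mathcal F$, using only the transverse combinatorics of $\tilde{\mathcal F}$. A secondary delicate point is the final passage from an $f^q$-disjoint essential loop $\lambda^\ast$ to an $f$-disjoint one; the other ingredients (the reduction, the existence of the foliation, the Birkhoff drift) are comparatively standard.
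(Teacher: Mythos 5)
The paper does not prove Theorem~\ref{th:PB} itself; it is quoted with a citation to \cite{Lec1}. So your proposal is a proof from scratch, and the overall framework you choose --- reduce to $\tilde g = T^{-p}\circ\tilde f^q$ fixed-point free, invoke the equivariant foliated Brouwer theorem for an identity isotopy of $f^q$ whose lift ends at $\tilde g$, then play the opposite ergodic drifts against the transverse foliation --- is indeed the right one and is in the spirit of \cite{Lec1}. One positive feature you should make explicit: since any fixed point of a maximal isotopy $I$ of $f^q$ would lift to a fixed point of $\tilde g$, the hypothesis that $\tilde g$ is fixed-point free forces $\mathrm{Fix}(I)=\emptyset$, so the transverse foliation $\mathcal F$ is \emph{non-singular} on all of $\A$, and by Poincar\'e--Hopf no leaf can be an inessential closed curve. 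This simplifies everything downstream and you should use it.

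Two of the steps are, as written, genuine gaps. First, the ``forcing step.'' The dichotomy you assert --- that a forcing lemma produces either ``a closed transverse curve, which through Brouwer's translation theorem on $\tilde\A$ would furnish a fixed point of $\tilde g$,'' or a closed leaf --- is not a standard output of forcing theory and the first branch is not justified: an \emph{essential} admissible closed transverse loop does not by itself produce a fixed point of $\tilde g$ (the Brouwer-index argument you are gesturing at applies to an \emph{inessential} admissible transverse loop, which would have to enclose a singularity, and there are none). Also, both transverse trajectories cross leaves in the \emph{same} positive transverse sense by construction; what differs is the horizontal drift of their lifts, not the crossing direction, so the phrase ``cross arbitrarily many leaves in opposite senses'' conflates two different things. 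What is actually needed here is: assuming no closed essential leaf, show that the $\mathcal F$-transverse trajectories of the two recurrent points have an $\mathcal F$-transverse intersection, so that the forcing machinery yields an admissible loop of intermediate (in particular zero) rotation number for $\tilde g$, hence a fixed point. Establishing the existence of that transverse intersection, rather than nested or disjoint trajectories, is the heart of the argument and is not automatic. Second, the $f^q$-free-to-$f$-free passage. You propose ``a short combinatorial argument on the cyclic family $\{f^i(\lambda^\ast)\}_{0\le i<q}$, exploiting the linear order of pairwise disjoint essential simple loops in the annulus.'' But these $q$ loops need not be pairwise disjoint: only $f^q(\lambda^\ast)\cap\lambda^\ast=\emptyset$ is known, and $f^i(\lambda^\ast)\cap f^j(\lambda^\ast)$ can be non-empty for $0<|i-j|<q$. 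The lemma you want (an essential loop free for $f^q$ forces an essential loop free for $f$) is true, but it needs a genuinely different argument --- for instance, from $K=\bigcap_{0\le i<q}f^i\bigl(\overline{L(\lambda^\ast)}\bigr)$, which satisfies $f(K)\subset K$, build the compact $f$-invariant essential set $\bigcap_{n\ge0}f^n(K)$, fill it in, and take an essential loop in the complementary $f$-invariant open annulus close to its frontier. As stated, the linear-order hint does not produce a proof.
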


\subsection{A fixed point theorem for a planar homeomorphism}\label{s.criterion} 

In this sub-section, we will give a criterion of existence of a fixed point for a planar homeomorphism, which is a slight generalization of a result proved  in \cite{LT2}. It will be an essential tool in the proofs of Propositions  \ref{prop:twist}  and  \ref{prop:connexions}.

 Let $(\lambda_i)_{1\leq i\leq r}$ be a finite family of pairwise disjoint lines of $\R^2$. Say that the family is {\it cyclically ordered} if:

\begin{itemize}
\item one can choose, for every $i\in\{1,\dots, r\}$, a connected component $E_i$ of $\R^2\setminus \lambda_i$  in such a way that the sets $\overline E_i$ are pairwise disjoint;

\item for every $i\in\{1,\dots, r-1\}$, one can find two disjoint connected sets, the first one containing $\lambda_i$ and $\lambda_{i+1}$, the other one containing every line $\lambda_j$, $j\not\in\{i,i+1\}$.
\end{itemize}

Note that the complement of $\bigcup_{1\leq i\leq r}E_i$ is a connected sub-surface $\Sigma$ whose boundary is equal to $\bigcup_{1\leq i\leq r}\lambda_i$. Note also that one can find two disjoint connected sets, the first one containing $\lambda_1$ and $\lambda_{r}$, the other one containing every line $\lambda_j$, $1<j<r$. By the extension of Schoenflies Theorem due to Homma (see \cite{Ho}), one knows that $(\lambda_i)_{1\leq i\leq r}$ is cyclically ordered if and only if there exists a homeomorphism of $\R^2$ that sends $\lambda_i$ on the graph of the fonction 
 $$\psi_i:(2i-1, 2i+1)\to\R, \enskip x\mapsto{1\over 1-(x-2i)^2}.$$

\begin{proposition} \label{prop:fixedpoint} Let $f$ be a homeomorphism of $\R^2$. Suppose that there exists a cyclically ordered family $(\lambda_i)_{1\leq i\leq 4}$ of pairwise disjoint lines and for every $i\in\{1,3\}$ a segment $\sigma_i\subset \lambda_i$ such that: 
\begin{itemize}
\item $f(\sigma_i)\cap \lambda_i=\emptyset$  if $i\in\{1,3\}$;
\item $f(\lambda_j)\cap \lambda_j=\emptyset$  if $j\in \{2,4\}$;
\item  $f(\sigma_i)\cap \lambda_j\not=\emptyset$  if $i\in\{1,3\}$ and $j\in \{2,4\}$.

\end{itemize}
Then $f$ has a fixed point.
\end{proposition}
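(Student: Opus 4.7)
The proof goes by contradiction: assume that $f$ has no fixed point, aiming to derive a configuration incompatible with Brouwer's plane translation theorem.

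First, by the Homma extension of the Schoenflies theorem recalled just before the statement, one may assume that each $\lambda_i$ is the graph of $\psi_i$ on $(2i-1, 2i+1)$. In particular the four petals $E_i$ are pairwise disjoint, and since $\lambda_j\subset\overline{E_j}$ one has $\lambda_j\cap E_i=\emptyset$ whenever $i\neq j$. Now $f(\sigma_i)$ is the continuous image of a connected set, disjoint from $\lambda_i$, hence contained in a single connected component of $\R^2\setminus \lambda_i$; meeting $\lambda_2$ or $\lambda_4$ rules out the component $E_i$, so
\[ f(\sigma_i)\subset \R^2\setminus \overline{E_i},\qquad i\in\{1,3\}. \]
Similarly, $f(\lambda_j)$ lies in a single component of $\R^2\setminus \lambda_j$ for $j\in\{2,4\}$, which identifies $\lambda_j$ as a Brouwer line of either $f$ or $f^{-1}$.

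The main step is to exploit the four crossings $f(\sigma_i)\cap\lambda_j\neq\emptyset$, $(i,j)\in\{1,3\}\times\{2,4\}$, together with the cyclic order of the lines, to build a closed ``free chain'' under $f$. For each $(i,j)$ I would pick a point $z_{i,j}\in\sigma_i$ with $f(z_{i,j})\in\lambda_j$ and consider the $f$-displacement from $z_{i,j}\in\lambda_i$ to $f(z_{i,j})\in\lambda_j$. Using the ``free'' pieces of $\lambda_2$ and $\lambda_4$ (free because $f(\lambda_j)\cap\lambda_j=\emptyset$) as bridges between consecutive translations, one assembles four such $f$-displacements $\sigma_1\to\lambda_2$, $\lambda_2\to\sigma_3$, $\sigma_3\to\lambda_4$, $\lambda_4\to\sigma_1$ (read modulo $f$ and $f^{-1}$ according to the Brouwer-line orientation determined in the previous paragraph) into a cycle. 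The bipartite crossing pattern $\{1,3\}\to\{2,4\}$ together with the cyclic order forces this cycle to be nontrivial, which contradicts Brouwer's theorem on the non-existence of periodic free chains for a fixed-point-free orientation-preserving homeomorphism of $\R^2$.

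The principal technical obstacle is to make this closure explicit. One must perform a case analysis according to which of the two components of $\R^2\setminus\lambda_j$ contains $f(\lambda_j)$ for $j\in\{2,4\}$ (four cases a priori), and at each step choose the $f$-orbit pieces with the correct orientation so that consecutive arcs agree at their endpoints. The symmetry of the hypotheses under the index involution $i\mapsto i+2\pmod 4$ (which simultaneously swaps $1\leftrightarrow3$ and $2\leftrightarrow4$) reduces the number of essentially distinct cases, and the combinatorics of first-crossings of $f(\sigma_1)$ and $f(\sigma_3)$ with $\lambda_2,\lambda_4$ completes the construction. As announced in the text, the result is only a slight generalization of the analogous statement in \cite{LT2}, the sole novelty being to replace full free lines by free segments $\sigma_i$, which only affects the containment argument above and not the combinatorial core of the proof.
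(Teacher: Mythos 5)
Your write-up is an outline of a strategy, not a proof, and the gap you flag as the ``principal technical obstacle'' is in fact the whole content of the result. The paper's proof does not use Brouwer's plane translation theorem or free disk chains at all; it is a direct fixed-point index computation. One constructs a closed curve $C$ out of four arcs: suitable sub-arcs $f(\sigma_1)$ and $f(\sigma_3)$ (normalized so that each runs from $\lambda_2$ to $\lambda_4$ with interior disjoint from both), together with the segments $\sigma_2\subset\lambda_2$, $\sigma_4\subset\lambda_4$ joining the corresponding endpoints. The hypotheses $f(\sigma_i)\cap\lambda_i=\emptyset$, $f(\lambda_j)\cap\lambda_j=\emptyset$, and the disjointness of the petals $\overline{E_i}$ then pin down, side by side, on which half-plane the vector field $z\mapsto f^{-1}(z)-z$ points along $C$ after normalizing $C$ to a square via Homma's theorem; one reads off index $\pm1$, hence a fixed point inside $C$. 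This is elementary and self-contained.

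Your chain-based approach has a concrete problem beyond being unfinished: the ``cycle'' you describe alternates $\sigma_1\xrightarrow{f}\lambda_2\xrightarrow{f^{-1}}\sigma_3\xrightarrow{f}\lambda_4\xrightarrow{f^{-1}}\sigma_1$, so the net iteration is zero. The Franks-type ``no periodic free disk chain'' lemma for a fixed-point-free orientation-preserving planar homeomorphism requires strictly positive iterates between consecutive disks; a chain with total exponent zero does not fall under it, and no contradiction is obtained from the crossing data alone. (The statement also does not assume $f$ orientation-preserving, whereas Brouwer's theorem does; the index proof avoids that hypothesis.) If you want to salvage a translation-theoretic argument, you would have to turn the crossings into a genuine positively-iterated chain or translation arc, and it is not clear from the given hypotheses how to do that. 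The paper's curve-and-index argument sidesteps all of this, and I would encourage you to redo the proof along those lines.
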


\begin{proof} Recall that $E_i$ is the connected component of $\R^2\setminus\lambda_i$ that does not contain the $\lambda_j$, $ j\not=i$. Taking a sub-segment of $\sigma_1$ if necessary, one can suppose that one end of $f(\sigma_1)$ belongs to $\lambda_2$, the other end to $\lambda_4$ and the other points of  $f(\sigma_1)$ neither to $\lambda_2$ nor to $\lambda_4$. The segment $f(\sigma_1)$, being disjoint from $\lambda_1$, belongs to the connected component of $\R^2\setminus \lambda_1$ that contains $\lambda_2$ and $\lambda_4$, it does not meet $\overline E_1$. Moreover, the interior of $f(\sigma_1)$ is contained in the connected component of $\R^2\setminus (\lambda_2\cup\lambda_4)$ that contains $\lambda_1$ and $\lambda_3$, it does not meet neither $\overline E_2$, nor $\overline E_4$.  One can suppose that  $\sigma_3$ satisfies similar  properties and of course  $f(\sigma_1)$ and $f(\sigma_3)$ are disjoint. Let $\sigma_2\subset\lambda_2$ be the segment that joins the two ends of $f(\sigma_1)$ and $f(\sigma_3)$ that are on $\lambda_2$. By hypothesis, $f^{-1}(\sigma_2)$ is disjoint from $\lambda_2$ and so is included in the connected component of $\R^2\setminus \lambda_2$ that contains $\lambda_1$ and $\lambda_3$, it is disjoint from $\overline E_2$.  The segment $\sigma_4\subset\lambda_4$ that joins the two ends of $f(\sigma_1)$ and $f(\sigma_3)$ that are on $\lambda_4$, satisfies similar properties. 
One gets a loop $C$ by taking the union of $f(\sigma_1)$, $f(\sigma_3)$, $\sigma_2$ and $\sigma_4$. The vector field $z\mapsto f^{-1}(z)-z$ does not vanish on $C$, let us explain why its index on $C$ is equal to $1$ or $-1$, which implies that there exists at least one fixed point of $f$ in the bounded component  of $\R^2\setminus C$. The set of orientation preserving homeomorphisms $h$ of $\R^2$ being path connected for the compact open topology, the value of the index of the vector field $z\mapsto h\circ f^{-1}\circ h^{-1}(z)-z$ on $h(C)$ does not depend on the choice of $h$.  Applying Homma's theorem, one can find an orientation preserving homeomorphism $h$ such that :
$$\begin{aligned} h(\lambda_2) &=\{0\}\times\R, \\ h(\lambda_4)&=\{1\}\times \R,\\ h(f(\sigma_1)) &=[0,1]\times\{0\}, \\ h(f(\sigma_3)) &=[0,1]\times\{1\},\end{aligned} $$
or 
$$ \begin{aligned} h(\lambda_2) &=\{0\}\times\R, \\ h(\lambda_4)&=\{1\}\times \R, \\ h(f(\sigma_3)) &=[0,1]\times\{0\}, \\h(f(\sigma_1)) &=[0,1]\times\{1\}.\end{aligned}$$
There is no loss of generality by supposing that the first situation occurs. The family $(\lambda_i)_{1\leq i\leq 4}$ being cyclically ordered, there are two possibilities: in the first case, $\overline E_1$ is contained in $(0,1)\times(-\infty, 0)$ and $\overline E_3$ in $(0,1)\times (1,+\infty)$; in the second case, $\overline E_1$ is contained in $(0,1)\times (0+\infty)$ and $\overline E_3$ in $(0,1)\times (-\infty, 1)$. It is very easy to compute the index of  the vector field $z\mapsto h\circ f^{-1}\circ h^{-1}(z)-z$ on the square $h(C)$: in the first case,  the vector field is pointing on the right on $\{0\}\times[0,1]$,  pointing downwards on $[0,1]\times\{0\}$, pointing on the left on  $\{1\} \times[0,1]$, pointing upwards on $[0,1]\times\{1\}$ and the index is $-1$;  in the second case,  the vector field is pointing on the right on $\{0\}\times[0,1]$,  pointing upwards on $[0,1]\times\{0\}$, pointing on the left on  $\{1\} \times[0,1]$, pointing downwards on $[0,1]\times\{1\}$ and the index is $+1$.\end{proof}

\begin{remark*} A special case where this criterion can be applied is when every line $\lambda_i$ is free (meaning that $f(\lambda_i)\cap \lambda_i=\emptyset$) and  $f(\lambda_i)\cap \lambda_j\not=\emptyset$  if $i\in\{1,3\}$ and $j\in \{2,4\}$. The proposition above was proved in \cite{LT2} when every $\lambda_i$ is a Brouwer line and $f(\lambda_i)\cap \lambda_j\not=\emptyset$  if $i\in\{1,3\}$ and $j\in \{2,4\}$. The proof above tells us that the arguments given in \cite{LT2} are still valid with some weaker hypothesis. 
\end{remark*}

\subsection{Some forcing results}\label{s.forcing} 
In this sub-section, we will state another result which will be essential in the proofs of Propositions  \ref{prop:twist}  and  \ref{prop:connexions}. It is naturally related to the forcing lemma for Brouwer lines that is stated in \cite{LT1} but will concern free lines instead of Brouwer lines and will ``include'' a dynamics among these lines.

Denote
 $$p_1:\tilde\A\to\R\enskip\mathrm{and}\enskip p_2:\tilde\A\to[0,1]$$
 the horizontal and vertical projections defined on $\tilde\A=\R\times[0,1]$. 
 
 Write $\mathrm{Homeo}_*(\tilde\A)$ for the set of orientation preserving homeomorphisms  of $\tilde\A$ that lets invariant each boundary line. The boundary lines can be naturally ordered, transposing by $p_1$ the usual order of the real line.

Denote $\tilde{\mathcal E}_{0}$ the set of lines $\tilde\lambda_0:\R\to \R \times(0,1)$ such that there exist $(\tilde a_0^-,0)\not=(\tilde a_0^+,0)$  in $\R\times\{0\}$ satisfying 
$$\lim_{t\to-\infty}\tilde  \lambda_0(t)=(\tilde a_0^-,0),\enskip \lim_{t\to+\infty}\tilde  \lambda_0(t)=(\tilde a_0^+,0).$$ We will say that $(\tilde a_0^-,0)$ and $(\tilde a_0^+,0)$ are the {\it ends }of $\tilde\lambda$.\footnote{ We could have defined  $\tilde{\mathcal E}_{0}$ to be the set of segments of $\tilde\A$ whose ends are on $\R\times\{0\}$ and whose interior is in the interior of $\tilde\A$ but the object that will appear naturally for the applications are lines and not segments.}
 We can define a relation $\prec$ on $\tilde{\mathcal E}_{0}$, writing $\tilde\lambda_0\prec\tilde\lambda'_0$ if:
\begin{itemize} 
\item
$\tilde\lambda_0\cap \tilde\lambda'_0=\emptyset$;
\item the smallest end of $\tilde\lambda_0$ is smaller than the smallest end of $\tilde\lambda'_0$ and the highest end of $\tilde\lambda_0$ is smaller than the highest end of $\tilde\lambda'_0$.

\end{itemize}

Note that if $\tilde\lambda_0\prec\tilde\lambda'_0$, then the highest end of $\tilde\lambda_0$ is not higher  than the smallest end of $\tilde\lambda'_0$ because $\tilde\lambda_0\cap \tilde\lambda'_0=\emptyset$ but it can be equal. The relation $\prec$ is not transitive owing to the first condition. Nevertheless, if  $\tilde{\mathcal F}_{0}$ is a subset of $\tilde{\mathcal E}_{0}$ such that the lines $\widetilde\lambda_0\in \tilde{\mathcal F}_{0}$ are pairwise disjoint, then the restriction of $\prec$ to  $\tilde{\mathcal F}_{0}$ induces an order $\preceq$ (not necessarily total) defined as follows:
$$\tilde \lambda_0\preceq\tilde  \lambda'_0\Leftrightarrow \tilde\lambda_0=\tilde\lambda'_0\enskip \mathrm{or}\enskip \tilde\lambda_0\prec\tilde\lambda'_0.$$ 
Every $\tilde f\in \mathrm{Homeo}_*(\tilde\A)$ naturally acts on $\tilde{\mathcal E}_{0}$ and it holds that
 $$ \tilde \lambda_0\prec \tilde \lambda'_0\Longrightarrow \tilde f(\tilde \lambda_0)\prec\tilde f( \tilde \lambda'_0).$$

 Similarly, one can define the set $\tilde{\mathcal E}_{1}$ of lines
 $\tilde\lambda_1:\R\to \R \times(0,1)$ such that there exist $(\tilde a_1^-,1)\not=(\tilde a_1^+,1)$  in $\R\times\{1\}$, the ends of  $\tilde\lambda_1$,   satisfying
$$\lim_{t\to-\infty}\tilde  \lambda_1(t)=(\tilde a_1^-,1),\enskip \lim_{t\to+\infty}\tilde  \lambda_1(t)=(\tilde a_1^+,1).$$
Moreover, one can define a relation $\prec$ on  $\tilde{\mathcal E}_{1}$ like we did on $\tilde{\mathcal E}_{0}$.

\medskip

We now fix $\tilde f\in \mathrm{Homeo}_*(\tilde\A)$ until the end of the section.  Consider $\tilde \lambda_0\in\tilde  {\mathcal E}_0$, $\tilde \lambda_1\in\tilde  {\mathcal E}_1$ and $n\geq 1$.  We will write $\tilde\lambda_0\overset{n} {\longrightarrow}\tilde \lambda_1$ in the case where $\tilde f^n( \tilde \lambda_0)\cap \tilde \lambda_1\not=\emptyset$. The following result is immediate:

\begin{lemma} \label{lemma:properties} Suppose that  $\tilde \lambda_0\in \tilde {\mathcal E}_{0}$,  $\tilde \lambda_1\in \tilde {\mathcal E}_{1}$ and $n\geq 1$ 
satisfy $\tilde \lambda_0\overset{n} {\longrightarrow} \tilde \lambda_1$. Then it holds that
$$\enskip \tilde \lambda_0\overset{n+1} {\longrightarrow}  \tilde f(\tilde \lambda_1), \enskip \tilde f^{-1}(\tilde \lambda_0)\overset{n+1} {\longrightarrow}   \tilde \lambda_1.$$
Moreover, if $\tilde h\in \mathrm{Homeo}_*(\tilde\A)$ commutes with $\tilde f$, then 
$$\tilde h (\tilde \lambda_0)\overset{n} {\longrightarrow}   \tilde h(\tilde \lambda_1). $$

\end{lemma}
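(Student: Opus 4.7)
The plan is to observe that each of the three conclusions is obtained by applying a homeomorphism (either $\tilde f$, $\tilde f^{-1}$, or $\tilde h$) to the non-empty set $\tilde f^n(\tilde\lambda_0)\cap\tilde\lambda_1$. The only point that requires any verification is that these operations keep us inside $\tilde{\mathcal E}_0$ and $\tilde{\mathcal E}_1$, so that the notation $\overset{n+1}{\longrightarrow}$ and $\overset{n}{\longrightarrow}$ in the conclusions actually makes sense.

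First I would record the following stability property: for every $\tilde g\in\mathrm{Homeo}_*(\tilde\A)$ and every $i\in\{0,1\}$, one has $\tilde g(\tilde{\mathcal E}_i)=\tilde{\mathcal E}_i$. Indeed, $\tilde g$ preserves each boundary line $\R\times\{i\}$, and hence also the open strip $\R\times(0,1)$; given $\tilde\lambda_i\in\tilde{\mathcal E}_i$ with ends $(\tilde a_i^{\pm},i)$, the image $\tilde g\circ\tilde\lambda_i$ is a line in $\R\times(0,1)$, and by continuity of $\tilde g$ on the closed annulus $\tilde\A$ we have $\tilde g(\tilde\lambda_i(t))\to \tilde g(\tilde a_i^{\pm},i)\in\R\times\{i\}$ as $t\to\pm\infty$, with the two limit points distinct because $\tilde g$ is a bijection. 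Applying this to $\tilde g=\tilde f^{\pm1}$ and $\tilde g=\tilde h$ takes care of the well-posedness of all three statements.

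With this in hand, the three conclusions are immediate. Applying $\tilde f$ to the non-empty set $\tilde f^n(\tilde\lambda_0)\cap\tilde\lambda_1$ gives $\tilde f^{n+1}(\tilde\lambda_0)\cap \tilde f(\tilde\lambda_1)\neq\emptyset$, which is precisely $\tilde\lambda_0\overset{n+1}{\longrightarrow}\tilde f(\tilde\lambda_1)$. Since $\tilde f^{n+1}(\tilde f^{-1}(\tilde\lambda_0))=\tilde f^n(\tilde\lambda_0)$, the \emph{same} non-empty set witnesses $\tilde f^{-1}(\tilde\lambda_0)\overset{n+1}{\longrightarrow}\tilde\lambda_1$. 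Finally, if $\tilde h\circ\tilde f=\tilde f\circ\tilde h$, then $\tilde f^n\circ\tilde h=\tilde h\circ\tilde f^n$, so applying the bijection $\tilde h$ to the intersection yields
\[
\tilde f^n(\tilde h(\tilde\lambda_0))\cap \tilde h(\tilde\lambda_1)=\tilde h\bigl(\tilde f^n(\tilde\lambda_0)\bigr)\cap \tilde h(\tilde\lambda_1)=\tilde h\bigl(\tilde f^n(\tilde\lambda_0)\cap\tilde\lambda_1\bigr)\neq\emptyset,
\]
which is the relation $\tilde h(\tilde\lambda_0)\overset{n}{\longrightarrow}\tilde h(\tilde\lambda_1)$.

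There is essentially no obstacle here: the statement is labelled \emph{immediate} in the paper, and the only step that is more than formal bookkeeping is the stability observation $\tilde g(\tilde{\mathcal E}_i)=\tilde{\mathcal E}_i$, which is a routine continuity-plus-injectivity argument on the closed annulus $\tilde\A$.
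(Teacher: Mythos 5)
Your proof is correct and is exactly the routine verification the paper has in mind when it calls the lemma ``immediate'' (the paper supplies no proof). The stability observation $\tilde g(\tilde{\mathcal E}_i)=\tilde{\mathcal E}_i$ for $\tilde g\in\mathrm{Homeo}_*(\tilde\A)$ and the three applications of bijectivity/commutativity are precisely what is needed, and nothing is missing.
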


The next result is less obvious:

\begin{lemma} \label{lemma:forcing} Suppose that $\tilde \lambda_0\in \tilde{\mathcal E}_{0}$,  $\tilde \lambda'_0\in \tilde{\mathcal E}_{0}$ ,
$\tilde \lambda_1\in \tilde{\mathcal E}_{1}$,  $\tilde \lambda'_1\in \tilde{\mathcal E}_{1}$, $n\geq 1$,  $n'\geq 1$ satisfy
$$\tilde \lambda_0\overset{n} {\longrightarrow} \tilde \lambda_1, \enskip \tilde \lambda'_0\overset{n'} {\longrightarrow} \tilde \lambda'_1$$
and 
$$ \tilde f^n(\tilde \lambda_0)\prec \tilde\lambda'_0, \enskip \tilde\lambda'_1\prec f^{n'}(\tilde\lambda_1), \enskip \tilde\lambda'_0\cap \tilde\lambda_1=\emptyset.$$ Then, one has
$$\tilde \lambda_0\overset{n+n'} {\longrightarrow} \tilde \lambda'_1.$$

\end{lemma}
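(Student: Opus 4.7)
The plan is to translate the hypotheses forward in time by $\tilde f^{n'}$, reducing the lemma to a purely topological statement about four lines in the strip, which is then settled by a Jordan-type separation argument. Setting $A = \tilde f^{n+n'}(\tilde\lambda_0)$, $B = \tilde f^{n'}(\tilde\lambda'_0)$, $C = \tilde\lambda'_1$, and $D = \tilde f^{n'}(\tilde\lambda_1)$, and using that $\tilde f$ preserves both $\prec$ and disjointness, the given hypotheses translate to: $A, B \in \tilde{\mathcal E}_0$ with $A \prec B$, $C, D \in \tilde{\mathcal E}_1$ with $C \prec D$, $A \cap D \neq \emptyset$, $B \cap C \neq \emptyset$, and $B \cap D = \emptyset$; the desired conclusion $\tilde\lambda_0 \overset{n+n'}{\longrightarrow} \tilde\lambda'_1$ amounts to $A \cap C \neq \emptyset$.

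For $\tilde\mu \in \tilde{\mathcal E}_0$ with ends $(\tilde a^\pm, 0)$, write $D_0(\tilde\mu)$ for the bounded open disk in $\tilde\A$ whose topological boundary is $\overline\mu \cup [\tilde a^-, \tilde a^+]\times\{0\}$, and define $D_1(\tilde\nu)$ analogously for $\tilde\nu \in \tilde{\mathcal E}_1$. Set $K = \overline{D_0(B)} \cup \overline{D_1(C)}$. The crucial observation is that $K$ is a compact connected subset of $\tilde\A$ meeting $\R\times\{0\}$ along $[b^-, b^+]\times\{0\}$ and $\R\times\{1\}$ along $[c^-, c^+]\times\{1\}$: connectedness is immediate from $B \cap C \neq \emptyset$, since any $z \in B \cap C$ lies in $\overline B \cap \overline C \subset \overline{D_0(B)} \cap \overline{D_1(C)}$. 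A standard Jordan-type argument then shows that $K$ disconnects $\tilde\A$, with a ``left'' component $L$ of $\tilde\A \setminus K$ containing the boundary rays $(-\infty, b^-)\times\{0\}$ and $(-\infty, c^-)\times\{1\}$, and a ``right'' component $R$ containing $(b^+, +\infty)\times\{0\}$ and $(c^+, +\infty)\times\{1\}$, with $L \cap R = \emptyset$.

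I then argue by contradiction: assume $A \cap C = \emptyset$. Using $A \cap B = \emptyset$ (from $A \prec B$), $A \cap C = \emptyset$, and the fact that the image of $A$ lies in $\R\times(0,1)$ with limit points $(a^-, 0)$ and $(a^+, 0)$ satisfying $a^\pm \leq b^-$, a short check gives $A \cap K = \emptyset$; the accumulation of $A$ at $(a^-, 0)$, where a small half-disk neighborhood avoids $K$ and sits in $L$, then places $A \subset L$ by connectedness. Symmetrically, $B \cap D = \emptyset$ and $C \cap D = \emptyset$ (the latter from $C \prec D$) give $D \cap K = \emptyset$, and since the limit point $(d^+, 1)$ of $D$ satisfies $d^+ > c^+$, we obtain $D \subset R$. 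Any $z \in A \cap D$ would then lie in $L \cap R = \emptyset$, a contradiction; hence $A \cap C \neq \emptyset$.

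The main technical hurdle will be the separation claim itself: rigorously verifying that the compact connected set $K$, which meets both boundary components of the strip, disconnects $\tilde\A$ into the indicated components, and correctly identifying which boundary rays belong to $L$ and which to $R$. Edge cases where endpoints coincide on the boundary (such as $a^+ = b^-$ or $c^+ = d^-$), or where the arcs $B$ and $C$ meet only tangentially (so that the open disks $D_0(B)$ and $D_1(C)$ may be disjoint), also need a moment's care, but each is handled identically: the open arcs $A, B, C, D$ omit their limit points, so even when those limit points happen to lie in $K$, each open arc remains disjoint from $K$, and the connectedness argument still places it in the correct component.
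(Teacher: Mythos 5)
Your proof is correct and rests on the same Jordan-type separation argument as the paper's, merely phrased in the time-$(n+n')$ frame and with the separating continuum $K$ spelled out. The paper instead works after $n$ iterates, observes that the connected sets $\tilde f^n(\tilde\lambda_0)\cup\tilde\lambda_1$ and $\tilde\lambda'_0\cup\tilde f^{-n'}(\tilde\lambda'_1)$ cross and hence must intersect, and then rules out three of the four possible pairwise intersections; this is a more compact packaging of the same topological fact you make explicit via $K$.
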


\begin{proof} The lines $\tilde f^n(\tilde\lambda_0)$ and $\tilde\lambda_1$ intersect, so $\tilde f^n(\tilde\lambda_0)\cup \tilde\lambda_1$ is connected. Similarly, $\tilde\lambda'_0$  and $\tilde f^{-n'}(\tilde\lambda'_1)$ intersect, so $\tilde\lambda'_0\cup\tilde f^{-n'}(\tilde\lambda'_1)$ is connected.
By hypothesis, one has $\tilde f^{-n'}(\tilde\lambda'_1)\prec \tilde\lambda_1$. The fact that 
$ \tilde f^n(\tilde \lambda_0)\prec \tilde\lambda'_0$ and $ \tilde f^{-n'}(\tilde\lambda'_1)\prec \tilde\lambda_1$ implies that
  $\tilde f^n(\tilde\lambda_0)\cup \tilde\lambda_1$ and $\tilde\lambda'_0\cup\tilde f^{-n'}(\tilde\lambda'_1)$ intersect.
Noting that all sets
$$\tilde\lambda'_0\cap \tilde\lambda_1, \enskip  \tilde f^n(\tilde \lambda_0)\cap \tilde\lambda'_0,\enskip  \tilde f^{-n'}(\tilde\lambda'_1)\cap \tilde\lambda_1$$ are empty, we deduce that $ \tilde f^n(\tilde \lambda_0)$ and $\tilde f^{-n'}(\tilde\lambda'_1)$ intersect, which means that $\tilde \lambda_0\overset{n+n'} {\longrightarrow} \tilde \lambda'_1.$\end{proof}

\begin{remark*}  Lemma \ref{lemma:forcing} is a slight modification of the forcing lemma given in  \cite{LT1}. Of course, its conclusion still holds under the assumptions
$$\tilde\lambda'_0\prec  \tilde f^n(\tilde \lambda_0), \enskip \tilde f^{n'}(\tilde\lambda_1)\prec \tilde\lambda'_1, \enskip \tilde\lambda'_0\cap \tilde\lambda_1=\emptyset.$$
\end{remark*}

Let us state now the main result, supposing that $\tilde f$ commutes with $T:(x,y)\mapsto(x+1,y)$ or equivalently supposing that it lifts a homeomorphism of $\A=\T\times[0,1]$.

\begin{proposition} \label{prop:connexions-general} Let $\rho_0$ and $\rho_1$ be the rotation numbers induced by $\tilde f$ on $\R\times\{0\}$ and $\R\times\{1\}$ respectively. Suppose that $\tilde \lambda_0\in \tilde {\mathcal E}_{0}$,  $\tilde \lambda_1\in \tilde {\mathcal E}_{1}$ satisfy the following:

\begin{enumerate}

\item for every $n>0$ and every $p\in\Z$, one has $\tilde f^n(\tilde\lambda_0)\cap T^p(\tilde \lambda_0)=\emptyset,$

\item for every $n>0$ and every $p\in\Z$,  one has $\tilde f^n(\tilde\lambda_1)\cap T^p(\tilde \lambda_1)=\emptyset,$

\item for every $n>0$ and every $p\in\Z$,  one has $\tilde f^n(\tilde\lambda_1)\cap T^p(\tilde \lambda_0)=\emptyset,$

\item there exists $n_0\geq 1$ such that  $\tilde \lambda_0\overset{n_0} {\longrightarrow} \tilde \lambda_1$.

\end{enumerate}
Then, if $p\in\Z$ and $q\geq 1$ satisfy
$$\rho_0(n_0+q)<p<\rho_1(n_0+q),$$ or
$$\rho_1(n_0+q)<p<\rho_0(n_0+q),$$  it holds that $$\tilde\lambda_0\overset{2n_0+q} {\longrightarrow}   T^p\tilde\lambda_1.$$

\end{proposition}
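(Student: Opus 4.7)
The plan is to chain two translates of the given connection $\tilde\lambda_0\overset{n_0}{\longrightarrow}\tilde\lambda_1$ via the forcing lemma. First, applying Lemma \ref{lemma:properties} $q$ times yields
$$\tilde\lambda_0\overset{n_0+q}{\longrightarrow}\tilde f^q(\tilde\lambda_1),$$
and applying the last clause of the same lemma with $\tilde h=T^p$ (which lies in $\mathrm{Homeo}_*(\tilde\A)$ and commutes with $\tilde f$ by hypothesis) yields
$$T^p\tilde\lambda_0\overset{n_0}{\longrightarrow}T^p\tilde\lambda_1.$$
I then feed these two connections into Lemma \ref{lemma:forcing} with $(\tilde\lambda_0,\tilde f^q(\tilde\lambda_1),T^p\tilde\lambda_0,T^p\tilde\lambda_1)$ in the roles of $(\tilde\lambda_0,\tilde\lambda_1,\tilde\lambda'_0,\tilde\lambda'_1)$ and $(n,n')=(n_0+q,n_0)$; its conclusion would then be exactly $\tilde\lambda_0\overset{2n_0+q}{\longrightarrow}T^p\tilde\lambda_1$.

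It thus remains to check the three hypotheses of Lemma \ref{lemma:forcing}. The disjointness $T^p\tilde\lambda_0\cap \tilde f^q(\tilde\lambda_1)=\emptyset$ is exactly hypothesis (3) applied with $n=q\geq 1$, and the disjointness parts of the two $\prec$ relations $\tilde f^{n_0+q}(\tilde\lambda_0)\cap T^p\tilde\lambda_0=\emptyset$ and $T^p\tilde\lambda_1\cap \tilde f^{n_0+q}(\tilde\lambda_1)=\emptyset$ are instances of hypotheses (1) and (2). The real content is the comparison of endpoints, which I reduce to the following classical fact about circle homeomorphism lifts: if an orientation-preserving increasing bijection $\tilde g\colon \R\to\R$ commutes with $T$ and has rotation number $\rho$, and if $N\rho<p$ (resp.\ $N\rho>p$) for some $N\geq 1$ and $p\in\Z$, then $\tilde g^N(\tilde x)-\tilde x<p$ (resp.\ $>p$) for every $\tilde x\in\R$. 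Indeed, $T^{-p}\tilde g^N$ is again an increasing bijection commuting with $T$, with rotation number $N\rho-p\neq 0$; hence it has no fixed point, and its displacement $\tilde x\mapsto T^{-p}\tilde g^N(\tilde x)-\tilde x$ is a continuous $1$-periodic function of constant nonzero sign, necessarily that of $N\rho-p$.

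Apply this with $\tilde g=\tilde f_i:=\tilde f|_{\R\times\{i\}}$ and $N=n_0+q$. In the case $\rho_0(n_0+q)<p<\rho_1(n_0+q)$, both endpoints of $\tilde f^{n_0+q}(\tilde\lambda_0)$ on $\R\times\{0\}$ lie strictly to the left of the corresponding endpoints of $T^p\tilde\lambda_0$ (using that $\tilde f_0$ is increasing, so the order of smallest and largest ends is preserved), and both endpoints of $T^p\tilde\lambda_1$ on $\R\times\{1\}$ lie strictly to the left of those of $\tilde f^{n_0+q}(\tilde\lambda_1)$. Combined with the disjointnesses above, this is exactly $\tilde f^{n_0+q}(\tilde\lambda_0)\prec T^p\tilde\lambda_0$ and $T^p\tilde\lambda_1\prec \tilde f^{n_0+q}(\tilde\lambda_1)$, and Lemma \ref{lemma:forcing} concludes. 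In the symmetric case $\rho_1(n_0+q)<p<\rho_0(n_0+q)$ both $\prec$ relations are reversed, and one invokes instead the variant of the forcing lemma recorded in the Remark following Lemma \ref{lemma:forcing}. I expect the only slightly delicate point to be the rotation-number comparison; the rest is a mechanical application of the two preceding lemmas.
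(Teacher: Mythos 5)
Your proof is correct and follows exactly the same route as the paper's: upgrade the given connection to $\tilde\lambda_0\overset{n_0+q}{\longrightarrow}\tilde f^q(\tilde\lambda_1)$ and $T^p\tilde\lambda_0\overset{n_0}{\longrightarrow}T^p\tilde\lambda_1$, derive the two $\prec$ relations from the rotation number inequalities together with hypotheses (1)--(3), and close with Lemma \ref{lemma:forcing} (or its remarked variant in the symmetric case). The only difference is that you spell out the elementary circle-lift fact used for the endpoint comparison, which the paper leaves implicit.
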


\begin{proof}

We can suppose that $\rho_0(n_0+q)<p<\rho_1(n_0+q)$, the other case being similar. Let $(\tilde z_0^-,0)$ and $(\tilde z_0^+,0)$ be the ends of $\tilde\lambda_0$. One can suppose for instance that $\tilde z_0^-<\tilde z_0^+$. From the relation $\rho_0(n_0+q)<p$, we obtain that 
$$\tilde f^{n_0+q}(\tilde z_0^-,0)< T^p(\tilde z_0^-,0), \enskip \tilde f^{n_0+q}(\tilde z_0^+,0)< T^p(\tilde z_0^+,0).$$
As a consequence of (1) we deduce that $$\tilde f^{n_0+q}(\tilde\lambda_0)\prec T^p(\tilde\lambda_0).$$

Similarly, as a consequence of (2) and of the relation $p<\rho_1(n_0+q)$, we deduce that $$T^p(\tilde\lambda_1) \prec \tilde f^{n_0+q}(\tilde\lambda_1).$$
Writing  $\tilde f^{n_0+q}(\tilde\lambda_1)=\tilde f^{n_0}(\tilde f^{q}(\tilde\lambda_1))$, using (3) and the relations
$$\tilde\lambda_0\overset{n_0+q} {\longrightarrow} \tilde f^q(\tilde\lambda_1),  \enskip T^p\tilde\lambda_0\overset{n_0} {\longrightarrow}  T^p\tilde\lambda_1,$$
we conclude by Lemma  \ref{lemma:forcing}, that 
$$\tilde\lambda_0\overset{2n_0+q} {\longrightarrow} T^p\tilde\lambda_1.$$\end{proof}

\begin{remark*} Of course, what has been done in this section can be extended to any abstract annulus (meaning every topological space homeomorphic to $\A$) and its universal covering space, the sets $\tilde {\mathcal E}_{0}$ and $\tilde {\mathcal E}_{1}$ being defined relative to the two boundary lines of the universal covering space.
\end{remark*}

\section{Dehn twist maps}\label{s.dehn}

The goal of this section is to prove Proposition  \ref{prop:twist}, which means to prove that  if $S$ is an orientable closed surface of genus $g\geq2$ and $f$ a non wandering homeomorphism of $S$  isotopic to a Dehn twist map, then $f$ has periodic points of period arbitrarily large.

We will fix from now on a Dehn  twist map $h$ on $S$ and a homeomorphism $f$ isotopic to $h$ (note that $f$ is orientation preserving). We will begin by stating some results that can be found in \cite{LS}. We denote $(A_{i})_{i\in I}$ the family of twisted annuli and  $(r_i)_{i\in I}$  the family of twist coefficients. Fix an annulus $A=A_{i_0}$ and then choose a connected component $\tilde A$ of $\tilde\pi^{-1}(A)$, where $\tilde\pi: \tilde S\to S$ is the universal covering projection. The boundary of $\tilde A$ is the union of two lines $\tilde \lambda_1$ and $\tilde \lambda_2$, each of them lifting a boundary circle of $A$, denoted respectively $\lambda_1$ and $\lambda_2$. We orient $\tilde\lambda_1$ and $\tilde\lambda_2$ in such a way that $\tilde\lambda_2\subset L(\tilde\lambda_1)$ and $\lambda_1\subset R(\tilde\lambda_2)$. There exists $T_0\in G$, uniquely defined, such that  $\tilde\lambda_1$ and $\tilde\lambda_2$ are $T_0$-lines. Note that the  stabilizer of $\tilde A$ in $G$ is the infinite cyclic group generated by  $T_0$. There exists a lift $\tilde h$ of $h$, uniquely defined, that fixes every point of $\tilde \lambda_1$. This lift coincides with $T_0^{r_{i_0}}$ or $T_0^{-r_{i_0}}$ on $\tilde \lambda_2$. Replacing $r_{i_0}$ with $-r_{i_0}$ if necessary, we can suppose that we are in the first case. The map $\tilde h$ fixes every  point of the unique connected component of $\tilde \pi^{-1}(S\setminus \cup_{i\in I}A_i)$ whose closure contains $\tilde\lambda_1$. This component is included in $R(\tilde\lambda_1)$ and its closure in $\overline{\tilde S}$ meets a (unique) component $\tilde J_1$ of $\partial\tilde S\setminus\{\alpha(T_0), \omega(T_0)\}$ because no connected component of $S\setminus \cup_{i\in I}A_i$ is an annulus. Consequently, the extension of $\tilde h$ to $\overline{\tilde S}$, still denoted $\tilde h$, admits fixed points in $\tilde J_1$. For the same reason $\tilde h\circ T_0^{-r_{i_0}}$ admits fixed points in the other component, denoted $\tilde J_2$.

Note that $\tilde h$ commutes with $T_0$ and so lifts a homeomorphism $\hat h$ of the open annulus $\hat S=\tilde S/T_0$ that preserves the orientation and fixes the two ends of $\hat S$. The map $\hat h$ can be extended to the compact annulus $\overline{\hat S}=\left(\overline {\tilde S}\setminus \{\alpha(T_0), \omega(T_0)\}\right)/T_0 $. It contains fixed points on the added circles $\hat J_1= \tilde J_1/T_0$ and $\hat J_2= \tilde J_2/T_0$, the fixed points in $\hat J_1$ being lifted to fixed points of $\tilde h$, the fixed points in $\hat J_2$ being lifted to fixed points of $\tilde h\circ T_0^{-r_{i_0}}$. 

The map $f$ being isotopic to $h$ admits a unique lift $\tilde f$ such that $[\tilde f]=[\tilde h]$ and this lift can be extended to a homeomorphism of $\overline{\tilde S}$ that coincides with $\tilde h$ on $\partial \tilde S$. The map  $\tilde f$ commutes with $T_0$ and lifts a homeomorphism $\hat f$ of $\hat S$. The map $\hat f$ can be extended to a homeomorphism of $\overline{\hat S}$ that coincides with $\hat h$ on $\hat J_1$ and $\hat J_2$. Consequently, $\hat f$ admits fixed points on the boundary circles of $\overline{\hat S}$, the ones on $\widehat J_1$ having a rotation number equal to zero for the lift $\tilde f$, the ones on $\widehat J_2$ having a rotation number equal to $r_{i_0}$ for the lift $\tilde f$: the map $\hat f$ satisfies a boundary twist condition.

\begin{proposition} \label{prop:essentialannular} At least one of the following situations holds:

\begin{enumerate}
\item the map $f$ has periodic points of period arbitrarily large;

\item there exists an essential simple loop $\hat \lambda\subset \hat S$ such that $\hat f(\hat \lambda)\cap\hat \lambda=\emptyset$.
\end{enumerate}

\end{proposition}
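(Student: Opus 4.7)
The plan is to apply Theorem \ref{th:PB} (the generalized Poincar\'e-Birkhoff theorem) to the extension of $\hat f$ to the compact annulus $\overline{\hat S}$. This extension is orientation-preserving and preserves each boundary circle globally, hence qualifies as being isotopic to the identity in the sense of Section \ref{s:PB}. By the construction recalled just before the statement, each boundary circle $\hat J_i$ contains a fixed point $\hat w_i$ of $\hat f$, and the Dirac measures $\mu_1 = \delta_{\hat w_1}$ and $\mu_2 = \delta_{\hat w_2}$ are ergodic $\hat f$-invariant compactly supported probability measures with rotation numbers $0$ and $r_{i_0}$ respectively for the lift $\tilde f$. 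Since $r_{i_0} \neq 0$, these rotation numbers differ.

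Theorem \ref{th:PB} then yields one of two alternatives. In the second, there exists an essential simple loop $\hat\lambda \subset \mathrm{int}(\overline{\hat S}) = \hat S$ with $\hat f(\hat\lambda)\cap\hat\lambda = \emptyset$, which is exactly conclusion (2). In the first, for every irreducible rational $p/q$ strictly between $0$ and $r_{i_0}$, the map $\hat f$ has a periodic point of period $q$ with rotation number $p/q$; since $|r_{i_0}| \geq 1$, choosing $p = \pm 1$ works for all sufficiently large $q$, producing $\hat f$-periodic points of arbitrarily large period.

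To deduce (1), I would show that such a periodic point $\hat z$ of period $q$ projects to an $f$-periodic point $z = \hat\pi(\hat z)$ of period exactly $q$. Lifting $\hat z$ to $\tilde z \in \tilde S$ gives $\tilde f^q(\tilde z) = T_0^p(\tilde z)$ with $\gcd(p,q)=1$. If $q'$ denotes the $f$-period of $z$, then $q' \mid q$ and there is $T \in G$ with $\tilde f^{q'}(\tilde z) = T(\tilde z)$; iterating gives $T^{q/q'} = T_0^p$ in $G$. In particular $T$ commutes with $T_0$ and, both being hyperbolic elements of a Fuchsian group, they share a common axis, hence are powers of a common primitive element of $G$. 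The core curve of the twisted annulus $A$ is a simple essential closed curve on $S$ and is therefore primitive in $\pi_1(S)$, so $T_0$ itself is primitive in $G$; consequently $T = T_0^m$ for some $m \in \mathbb{Z}$. The identity $T_0^{mq/q'} = T_0^p$ then forces $mq/q' = p$, and the coprimality $\gcd(p,q) = 1$ gives $q \mid q'$, whence $q' = q$. The main obstacle is precisely this last descent from $\hat S$ down to $S$: the equality of periods is not automatic, and it rests essentially on the primitivity of $T_0$ coming from the simplicity of the core curve of the twisted annulus.
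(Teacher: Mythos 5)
Your application of Theorem \ref{th:PB} to $\hat f$ on the compactified annulus $\overline{\hat S}$, using the boundary fixed points to produce invariant measures with distinct rotation numbers $0$ and $r_{i_0}$, matches the paper's starting point. The gap is in the descent from $\hat S$ to $S$. You claim that from $\tilde f^{q'}(\tilde z)=T(\tilde z)$ one gets by iteration $T^{q/q'}=T_0^p$. This is false in general: $\tilde f$ is only known to commute with $T_0$, not with an arbitrary $T\in G$. The correct iteration, using $\tilde f\circ T=[\tilde f](T)\circ\tilde f$, gives
\[
T_0^p \;=\; [\tilde f]^{(k-1)q'}(T)\,[\tilde f]^{(k-2)q'}(T)\cdots[\tilde f]^{q'}(T)\, T, \qquad k=q/q',
\]
which is not $T^k$ unless $[\tilde f]^{q'}$ fixes $T$, i.e.\ unless $T$ already lies in the centralizer of $\tilde f$, which for a Fuchsian group at genus $\geq 2$ essentially forces $T\in\langle T_0\rangle$ — exactly what you are trying to establish. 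So the chain ``$T$ commutes with $T_0$, hence shares its axis, hence equals $T_0^m$ by primitivity'' has no valid starting point, and the conclusion that the period is preserved under $\hat\pi$ does not follow. Indeed, there is no reason in general why a periodic point of $\hat f$ of period $q$ should project to an $f$-periodic point of period exactly $q$.

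The paper's proof avoids this problem entirely. Rather than trying to control the projected period for each $q$, it fixes a nontrivial compact interval $J\subset(0,r_{i_0})$, uses (citing \cite{LS}) that all periodic orbits of $\hat f$ with rotation number in $J$ meet a fixed compact set $K\subset\hat S$, and then selects a sequence $p_m/q_m\in J$ with $q_m$ \emph{prime} and increasing. Primality forces the projected period of $z_m=\hat\pi(\hat z_m)$ to be either $q_m$ or $1$; the possibility that infinitely many $z_m$ are fixed is then excluded by letting the lifts $\tilde z_m$ accumulate on a lift $\tilde z$ of a fixed point: for $m$ large, $\tilde z_m$ would satisfy $\tilde f^n(\tilde z_m)=T_n(\tilde z_m)$ for the same deck transformations $T_n$ as $\tilde z$, contradicting the fact that the $\hat z_m$ have distinct periods $q_m$. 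If you want to keep a direct algebraic argument, you would need to justify the commutation of $T$ with $\tilde f$ (or with $T_0$) before invoking common axes and primitivity; as written, that justification is missing and the statement you want to prove is false without some additional device such as the paper's use of prime periods together with compactness.
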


This result was proved in  \cite{LS} in a weaker form, asking for infinitely many periodic points instead of periodic points of period arbitrarily large. The proof below is a slight modification of the proof in \cite{LS}.

\begin{proof}  By Theorem \ref{th:PB}, one knows that if (2) is not true, then for every $p/q\in[0,r_0]$, written in an irreducible way, there exists a periodic point $\hat z$ of period $q$ and rotation number $p/q$ for the lift $\tilde f$. One can easily prove (see \cite{LS}) that for every non trivial compact interval $J\subset(0,r_0)$, there exists a compact set $K\subset \hat S$ such that every periodic orbit of rotation number $p/q\in J$ meets $K$. Let $(p_m/q_m)_{m\geq 0}$ be a sequence in $J$, such that the sequence $(q_m)_{m\geq 0}$  is increasing and consists of prime numbers. For every $m\geq 0$, choose a periodic point $\hat z_m\in K$ of period $q_m$ and rotation number $p_m/q_m$. Taking a subsequence if necessary, one can suppose that the sequence $ (\hat z_m)_{m\geq 0}$ converges to a point $\hat z$. Moreover, $\hat z_m$ projects onto a point $z_m\in S$ satisfying $f^{q_m}(z_m)=z_m$. The integer $q_m$ being prime, $z_m$ has period $q_m$ if it is not fixed. To prove the proposition it remains to show that $z_m$ is not fixed if $m$ is large enough. If it is not the case, taking a sub-sequence if necessary, one can suppose that all $z_m$ are fixed. The sequence $ (z_m)_{m\geq 0}$ converges to  $z$, the projection of $\hat z$, and this point is a fixed point. Take a lift $\tilde z\in \tilde S$ of $\hat z$. There exists a sequence $(T_n)_{n\geq 1}$ in $G$ such that $\tilde f^n(\tilde z)= T_n(\tilde z)$. But if $m$ is large enough and $\tilde z_m$ is the lift of $\hat z_m$ that is close to $\tilde z$, then one has $\tilde f^n(\tilde z_m)= T_n(\tilde z_m)$, for every $n\geq  1$. This is impossible because the integers $q_m$ are all distinct. 
\end{proof}

\bigskip
By Proposition  \ref{prop:essentialannular}, it is sufficient to prove the following result to get Proposition  \ref{prop:twist}:

\begin{proposition} \label{prop:twisthorseshoe} If $f$ is non wandering and if there exists an essential simple loop $\hat \lambda$ such that $\hat f(\hat \lambda)\cap\hat \lambda=\emptyset$, then $f$ has periodic points of period arbitrarily large. 
\end{proposition}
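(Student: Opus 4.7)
The plan is to use Propositions~\ref{prop:fixedpoint} and~\ref{prop:connexions-general} to produce periodic points of $\hat f$ with a dense set of distinct rotation numbers in $(0,r_{i_0})$, so that their periods are unbounded.  First I would exploit $\hat f(\hat\lambda)\cap\hat\lambda=\emptyset$ to obtain monotone nested dynamics. The loop $\hat\lambda$ separates $\overline{\hat S}$ into two sub-annuli $A^-\ni\hat J_1$ and $A^+\ni\hat J_2$; since $\hat f(\hat\lambda)$ is connected and misses $\hat\lambda$, after swapping labels we may assume $\hat f(A^+)\subsetneq A^+$.  By induction, $(\hat f^n(\hat\lambda))_{n\in\Z}$ is a nested family of pairwise disjoint essential loops ordered from $\hat J_1$ to $\hat J_2$.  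The open annulus between $\hat\lambda$ and $\hat f(\hat\lambda)$ is a wandering open set for $\hat f$, and since $\hat f$ is a lift of the non wandering $f$ on $S$, Proposition~\ref{prop:wandering} implies that its forward orbit is not relatively compact in $\hat S$.  Being contained in $A^+$, it must accumulate on $\hat J_2$; symmetrically $\hat f^{-n}(\hat\lambda)$ accumulates on $\hat J_1$.

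Next I would set up forcing on the universal cover $\tilde\A=\R\times[0,1]$ of $\overline{\hat S}$, with $\R\times\{0\}$ covering $\hat J_1$ and $\R\times\{1\}$ covering $\hat J_2$; let $\tilde f$ denote the lift of $\hat f$ to $\tilde\A$ having boundary rotation numbers $\rho_0=0$ and $\rho_1=r_{i_0}$, and $T\colon(x,y)\mapsto(x+1,y)$ the deck generator.  The cross-cut arcs $\tilde\lambda_0\in\tilde{\mathcal E}_0$ and $\tilde\lambda_1\in\tilde{\mathcal E}_1$ required by Proposition~\ref{prop:connexions-general} are constructed as lifts of thin arcs $\hat\beta_0,\hat\beta_1\subset\overline{\hat S}$ sitting in narrow collars of $\hat J_1$ and $\hat J_2$, each crossing a deep iterate $\hat f^{-N}(\hat\lambda)$ (respectively $\hat f^N(\hat\lambda)$) supplied by the first step for $N$ large.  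Conditions~(1)--(3) are verified by taking the horizontal extent of the bumps strictly less than the integer shift $r_{i_0}$, keeping $T$-translates apart and preventing endpoints of $\tilde f$-iterates from coinciding with those of $T$-translates; condition~(4), $\tilde\lambda_0\overset{n_0}{\longrightarrow}\tilde\lambda_1$, then follows from the monotone structure of the first step: forward iterates of $\tilde\lambda_0$ are pushed upward across the nested family of lifts of $\hat f^n(\hat\lambda)$ and eventually meet $\tilde\lambda_1$.

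Proposition~\ref{prop:connexions-general} thus furnishes, for every $q\ge 1$ and every integer $p$ with $0<p<r_{i_0}(n_0+q)$, a crossing $\tilde f^{2n_0+q}(\tilde\lambda_0)\cap T^p(\tilde\lambda_1)\ne\emptyset$.  Combining two such crossings and adjoining suitable $T$-translates produces a cyclically ordered family of four lines to which Proposition~\ref{prop:fixedpoint} applies for the homeomorphism $g=\tilde f^{2n_0+q}\circ T^{-p}$; the resulting fixed point of $g$ projects to a periodic point of $\hat f$, and hence of $f$, of rotation number $p/(2n_0+q)$ for the lift $\tilde f$ and period dividing $2n_0+q$.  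Since every rational $p_0/q_0\in(0,r_{i_0})$ is realized in this form by choosing $q=q_0k-2n_0$ and $p=p_0k$ for $k$ large, and since only finitely many rotation numbers can be carried by periodic points of bounded period, the periods are unbounded.

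The principal obstacle is the second step, specifically the freeness conditions~(1)--(3) for the constructed cross-cut arcs.  A direct placement of the bumps at boundary fixed points fails, since at such points $\tilde f$ agrees with $T^{r_{i_0}}$ and the $\tilde f$-images of the arcs share endpoints with the $T$-translates, typically forcing an intersection.  Circumventing this requires choosing the anchoring points of the bumps generically on the two boundary circles and fitting the bumps in intervals of horizontal length strictly less than $r_{i_0}$; the non wanderingness of $f$ is needed to rule out pathological recurrence in the narrow boundary collars.
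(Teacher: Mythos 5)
Your proposal correctly identifies the forcing machinery (Propositions~\ref{prop:fixedpoint} and \ref{prop:connexions-general}) as the engine of the argument, but the way you set it up differs from the paper in ways that create genuine gaps.

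First, the freeness conditions (1)--(3) of Proposition~\ref{prop:connexions-general}, which you yourself flag as ``the principal obstacle,'' are not something that can be repaired by choosing thin bumps at generic anchoring points. The issue is not where the endpoints sit on $\hat J_1$ and $\hat J_2$, but that nothing controls the self-intersections of $\tilde f^n(\tilde\lambda_0)$ with $T^p(\tilde\lambda_0)$ for small $n>0$: near a boundary circle of rotation number zero the dynamics has arbitrary recurrence, and a cross-cut crossing $\hat f^{-N}(\hat\lambda)$ can return and cut its own translates before escaping upward. The paper avoids this entirely by not introducing ad hoc arcs at all: the cross-cuts are lifts of the free loop $\hat\lambda$ itself. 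Because $\hat f(\hat\lambda)\cap\hat\lambda=\emptyset$ (and because, after passing to a finite cyclic cover via Lemma~\ref{lemma:intprop}, distinct lifts of $\lambda$ in $\tilde{\mathcal L}_{1,l}$, $\tilde{\mathcal L}_{1,r}$, $\tilde{\mathcal L}_{2,l}$, $\tilde{\mathcal L}_{2,r}$ are pairwise disjoint), the freeness conditions hold automatically. Moreover, the paper does not apply the forcing proposition in $\tilde\A$ directly but in the prime-end compactifications of the invariant open annuli $\hat U_0$, $\hat U_1$, $\hat U_2$ — precisely because, as the remark at the end of Section~\ref{s.dehn} explains, working in the full annulus one loses the needed containment of $\tilde f^{2n_0+q}(\tilde\sigma_1)$ in one side of $\tilde\lambda_0$, which is what makes the cyclically ordered quadruple of Proposition~\ref{prop:fixedpoint} available.

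Second, and independently: even granting the forcing step, you only obtain periodic points of $\hat f$, and you assert that they project to periodic points of $f$ with unbounded period. That does not follow. The covering $\hat S\to S$ can collapse periods drastically, and the rotation number of $\hat z$ in $\tilde\A$ does not control the period of its projection on $S$. The paper is very careful here: it does not find fixed points of $\tilde f^{m}\circ T_0^{-p}$ (which would give periodic points of $\hat f$), but fixed points of $\tilde f^{m}\circ T_m^{-1}$ with $T_m\in G$ a deck transformation of $\tilde S\to S$ sending $\tilde S_{[-1,0]}$ to $\tilde S_{[0,1]}$, i.e.\ \emph{not} a power of $T_0$. This $T_m$ projects to the generator $\check T$ of the infinite cyclic cover $\check S$ (whose existence requires the omitted Lemma~\ref{lemma:intprop}), so the resulting orbit wraps around $\check S$ once in $m$ steps and has period exactly $m$ on $S$. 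Nothing of this kind can be extracted from your construction inside the $T_0$-covering, and the closing sentence ``since only finitely many rotation numbers can be carried by periodic points of bounded period, the periods are unbounded'' is not a valid substitute. Finally, condition (4) (the initial crossing $\tilde\lambda_0\overset{n_0}{\longrightarrow}\tilde\lambda_1$) also needs more than the monotone nesting of $\hat f^n(\hat\lambda)$: two arcs can both cross $\hat f^N(\hat\lambda)$ at different longitudes without meeting. The paper obtains the initial connection from non-wandering via Lemma~\ref{lemma:connections}, applied to a wandering disk $\check U\subset\check S_{[-1,0]}$, not from nesting alone.
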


Let us begin by proving:

\begin{lemma} \label{lemma:intprop} Suppose that $f$ is non wandering and that there exists an essential simple loop $\hat \lambda$ such that $\hat f(\hat \lambda)\cap\hat \lambda=\emptyset$. Then:
\begin{enumerate}
\item the annulus $A_{i_0}$ does not separate $S$ (its complement is connected);

\item the loop $\hat \lambda$ projects onto a simple loop $\check \lambda\subset \check S$ such that $\check f(\check \lambda)\cap\check\lambda=\emptyset$, where $\check S$ is the cyclic cover of $S$ naturally associated to $A_{i_0}$ and $\check f$ the homeomorphism of $\check S$ lifted by $\hat f$.\end{enumerate}

\end{lemma}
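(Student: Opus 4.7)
My plan is to prove (1) by contradiction and derive (2) from the resulting cover structure; the key tool throughout is Proposition~\ref{prop:wandering}. As preparation, observe that $\hat\lambda$ and $\hat f(\hat\lambda)$ are disjoint essential simple loops in the open annulus $\hat S$, so together they bound an open annular region $\hat U\subset\hat S$ which is a wandering open set for $\hat f$. Proposition~\ref{prop:wandering}, applied to the covering $\hat S\to S$ with $f$ non-wandering, yields that $\bigcup_{n\ge 0}\hat f^n(\hat U)$ and $\bigcup_{n\ge 0}\hat f^{-n}(\hat U)$ are not relatively compact in $\hat S$, hence accumulate at the two distinct ends $\hat J_1,\hat J_2$ of $\hat S$.

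For (1), suppose for contradiction that $A_{i_0}$ separates $S$ into $S_1$ and $S_2$. Then the compact annulus $\hat A=\tilde A/\langle T_0\rangle$ separates $\hat S$ into two open annular regions $\hat V_1,\hat V_2$, where $\hat V_j$ is an infinite-sheeted cover of $V_j:=S_j\setminus\mathrm{int}(A_{i_0})$ corresponding to the boundary subgroup $\langle T_0\rangle\le\pi_1(V_j)$. By the specific choice of $\tilde h$ (identity on $\tilde\lambda_1$) together with the fact that $h$ fixes $S\setminus\bigcup_i A_i$ pointwise, $\tilde h$ is the identity on the lift $\tilde V_1$ adjacent to $\tilde\lambda_1$ and coincides with $T_0^{r_{i_0}}$ on $\tilde V_2$; in particular $\hat h$ is the identity on each $\hat V_j$. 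I would derive a contradiction by combining this strong ``identity outside $\hat A$'' structure with the escape behavior above: the iterates of $\hat U$ must penetrate deeply into one of the $\hat V_j$, but since $\hat h\vert_{\hat V_j}=\mathrm{Id}$ and $\hat f$ is isotopic to $\hat h$, such a drift would project downstairs to produce a genuine wandering open set in $V_j\subset S$, contradicting that $f$ is non-wandering.

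For (2), once (1) is established, $\gamma$ is a non-separating simple closed curve and $[\gamma]\neq 0$ in $H_1(S,\mathbb Z)$. Algebraic intersection with $[\gamma]$ yields a surjection $\pi_1(S)\to\mathbb Z$ with kernel $K$, and we set $\check S:=\tilde S/K$. Because $[\gamma]\cdot[\gamma]=0$ we have $T_0\in K$, so there is a factorization $\tilde S\to\hat S\to\check S\to S$; let $\check f$ be the induced lift of $f$ to $\check S$ and $\check\lambda$ the image of $\hat\lambda$. The two assertions---$\check\lambda$ simple and $\check f(\check\lambda)\cap\check\lambda=\emptyset$---reduce, upon lifting to $\tilde S$, to verifying that for every $k\in K\setminus\langle T_0\rangle$ the lines $k\tilde\lambda$ and $\tilde\lambda$ are disjoint, and that $k\tilde f(\tilde\lambda)\cap\tilde\lambda=\emptyset$ for all $k\in K$. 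The cases $k\in\langle T_0\rangle$ are immediate from the $T_0$-invariance of $\tilde\lambda$, the commutativity of $\tilde f$ with $T_0$, and the hypothesis $\hat f(\hat\lambda)\cap\hat\lambda=\emptyset$. For $k\in K\setminus\langle T_0\rangle$, the lines $\tilde\lambda$ and $k\tilde\lambda$ are $T_0$- and $kT_0k^{-1}$-lines with distinct pairs of endpoints on $\partial\tilde S$, and their disjointness follows from a Brouwer-line type argument, again invoking Proposition~\ref{prop:wandering} applied to the intermediate cover $\check S\to S$ to rule out transverse intersections.

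The main obstacle is (1): extracting a genuine contradiction with the non-wandering hypothesis from the separating configuration. The subtlety is that $\hat f$ need not preserve the decomposition $\hat V_1\sqcup\hat A\sqcup\hat V_2$ set-wise (only up to isotopy), so one must carefully transfer the non-compactness of $\bigcup_n \hat f^n(\hat U)$ in the cover $\hat S$ into a bona fide wandering open set downstairs in $S$. Step (2), by comparison, is mostly bookkeeping at the cover level modulo establishing the disjointness of $K$-translates of $\tilde\lambda$.
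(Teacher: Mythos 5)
The proposal has a genuine gap, and you identify it yourself: you never complete the contradiction in part (1). The paper does not go through the structure $\hat h=\mathrm{Id}$ on the complement of the twist annuli; that route runs into exactly the obstruction you flag (that $\hat f$ is only isotopic, not equal, to $\hat h$, and the escape of $\bigcup_n \hat f^n(\hat U)$ toward the ends of $\hat S$ is an infinite-cover phenomenon that does not descend to a wandering set in $S$). The paper's mechanism is different and crucially produces a \emph{relatively compact} forward orbit of a wandering set, which is what Proposition~\ref{prop:wandering} actually forbids. Concretely: one shows that every lift of $\hat\lambda$ is a Brouwer line of $\tilde f$, and then defines two collections $\Lambda^-,\Lambda^+$ of lifts $\tilde\lambda=T(\tilde\lambda_0)$ according to whether the endpoints $\alpha(TT_0T^{-1}),\omega(TT_0T^{-1})$ and $\alpha(T_0),\omega(T_0)$ lie on the right or left of the respective lines. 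Both conclusions (1) and (2) reduce simultaneously to the single statement that every $\tilde\lambda\in\Lambda^-\cup\Lambda^+$ is disjoint from $\tilde\lambda_0$. To prove this, one forms the compact set
$$\hat K=\overline{L(\hat\lambda)}\cap\Bigl(\bigcup_{\tilde\lambda\in\Lambda^-}\overline{L(\hat{\tilde\lambda})}\Bigr)\subset\hat S,$$
observes that the Brouwer-line property forces $\hat f(\hat K)\subset\mathrm{int}(\hat K)$, and notes that if some $\tilde\lambda\in\Lambda^-$ met $\tilde\lambda_0$ then $\hat U=\mathrm{int}(\hat K)\setminus\hat f(\hat K)$ would be a non-empty wandering open set with $\bigcup_{n\ge0}\hat f^n(\hat U)\subset\hat K$ compact, contradicting Proposition~\ref{prop:wandering}. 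You have the right tool, but not the right configuration to feed it.

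Your treatment of (2) has the same issue: the reduction to disjointness of $K$-translates of $\tilde\lambda_0$ is the right bookkeeping, but the claim that disjointness ``follows from a Brouwer-line type argument, again invoking Proposition~\ref{prop:wandering}'' is the entire content and is not supplied. Moreover, treating (1) and (2) as two separate contradiction arguments misses the structural point that they are consequences of one and the same disjointness statement about $\Lambda^\pm$, and that the very construction of the absorbing set $\hat K$ is what makes the statement provable; you would essentially rediscover the paper's argument in order to make either of your two sketches rigorous.
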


The surface $\check S$ is the normal covering space of $S$, whose group of automorphisms is infinite cyclic, and such that the preimage of $A_{i_0}$ by the covering projection is the union of disjoint separating annuli homeomorphic to $A_{i_0}$.  The result is proved in \cite{LS} assuming $f$ lets invariant a totally supported finite measure. We just need to verify that the arguments given in \cite{LS} are still valid in this wider situation.

\begin{proof} 
 The proof given in  \cite{LS} is based on the fact that if the conclusions of Lemma \ref{lemma:intprop}  are not satisfied, there exists a non empty wandering open set $\hat U$ of $\hat f$ such that $\bigcup_{n\geq 0}\hat f^n(\hat U)$ or $\bigcup_{n\geq 0}\hat f^{-n}(\hat U)$ is relatively compact. So, by Proposition \ref{prop:wandering}, the proof extends to the case of a non wandering homeomorphism.  Let us explain briefly the arguments.

The loop $\hat \lambda$ can be lifted to a $T_0$-line $\tilde\lambda_0$ of $\tilde S$. The orientation of $\tilde\lambda_0$ induces an orientation on $\hat \lambda$. The loops $\hat f(\hat\lambda)$  and  $\hat f^{-1}(\hat\lambda)$ belong to different components of $\hat S\setminus\hat  \lambda$. Replacing $f$ with $f^{-1}$ if necessary, one can suppose that $\hat f(\hat\lambda)$ is on the left of $\hat\lambda$ and $\hat f^{-1}(\hat\lambda)$ on its right. This implies that $\tilde \lambda_0$ is a Brouwer line of $\tilde f$. 
The lift $\tilde f$ acts on the set of lifts of $\hat\lambda$ in a natural way. Indeed if $\tilde\lambda	=T\tilde \lambda_0$, $T\in G$,  is another lift, then $\tilde\lambda$ is a $TT_0T^{-1}$-line and one can define 
$$[\tilde f] (\tilde \lambda)= [\tilde f](T)(\tilde\lambda_0) = \tilde f\circ T\circ \tilde f^{-1}(\tilde\lambda_0).$$The line $\tilde\lambda_0$ being a Brouwer line of $\tilde f$, it holds that:
  $$\tilde f\left(\overline{L(\tilde\lambda)}\right)\subset L([\tilde f](\tilde\lambda)), \enskip \tilde f^{-1}\left(\overline{R(\tilde\lambda)}\right)\subset R([\tilde f]^{-1}(\tilde\lambda)).$$

Define $\Lambda^-$ as being the set of  lifts $\tilde \lambda =T(\tilde\lambda_0)$ such that:
\begin{itemize} 
\item$\alpha(TT_0T^{-1})$ and $\omega(TT_0T^{-1})$ are on the right of $\tilde \lambda_0$;
\item$\alpha(T_0)$ and $\omega(T_0)$  are on the right of $\tilde\lambda$.
\end{itemize}
Similarly, define $\Lambda^+$ as being the set of  lifts $\tilde \lambda =T(\tilde\lambda_0)$  such that:
\begin{itemize} 
\item$\alpha(TT_0T^{-1})$ and $\omega(TT_0T^{-1})$ are on the left of $\tilde \lambda_0$;
\item$\alpha(T_0)$ and $\omega(T_0)$  are on the left of $\tilde \lambda$.
\end{itemize}

It is not very difficult to see that the conclusions of Lemma \ref {lemma:intprop} will occur if we prove that $\tilde\lambda\cap \tilde \lambda_0=\emptyset$ for every $\tilde\lambda\in \Lambda_-\cup \Lambda_+$.  Fix $\tilde\lambda\in \Lambda_{-}$. It projects onto a line of $\hat S$, denoted $\hat{\tilde\lambda}$, that joins the end on the right of $\hat \lambda$ to itself. It separates $\hat S$ into two components, $R(\hat{\tilde\lambda})$ on its right  and $L(\hat{\tilde\lambda})$ on its left.  
Of course, one has $\widehat{T_0(\tilde\lambda)}= \hat{\tilde\lambda}$. There are finitely many lines $\hat{\tilde\lambda}$, $\tilde\lambda\in \Lambda_-$, that meet $\hat \lambda$, we want to prove that none of them does it. The set
 $$\hat K=\overline {L(\hat \lambda)}\cap \left(\bigcup_{\tilde\lambda\in \Lambda_-} \overline{L(\hat{\tilde\lambda})}\right)$$ 
 is compact and satisfies
$\hat f(\hat K) \subset \mathrm{int} (\hat K)$. Indeed, if $\hat z\in  \overline {L(\hat \lambda)}\cap\overline{L(\hat{\tilde\lambda})}$, then $\hat f(\hat z)\in  L(\hat \lambda)\cap L(\widehat{[f](\tilde\lambda)})$. 
In case, there exists $\tilde\lambda\in \Lambda_-$ such that $\tilde\lambda\cap \tilde \lambda_0\not=\emptyset$, we deduce that $\hat U=\mathrm{int} (\hat K)\setminus \hat f(\hat K) $ is a non empty wandering open set of $\hat f$ such that $\bigcup_{n\geq 0}\hat f^n(\hat U)$ is contained in $\hat K$. 
 We can apply Proposition \ref{prop:wandering} to get a contradiction. Similarly, we can construct a non empty wandering open set $\hat U$ of $\hat f$ such that $\bigcup_{n\geq 0}\hat f^{-n}(\hat U)$ is relatively compact in case there exists $\tilde\lambda\in \Lambda_+$ such that $\tilde\lambda\cap \tilde \lambda_0\not=\emptyset$.\end{proof}

We will suppose until the end of the section that $f$ satisfies the hypothesis of Proposition \ref{prop:twisthorseshoe} and Lemma \ref{lemma:intprop}: $f$ is non wandering and there exists an essential simple loop $\hat \lambda$ such that $\hat f(\hat \lambda)\cap\hat \lambda=\emptyset$. More precisely we will suppose that $\hat f(\hat\lambda)$ is on the left of $\hat\lambda$ and $\hat f^{-1}(\hat\lambda)$ on its right, meaning that the lift $\tilde \lambda_0$ of $\hat\lambda$ that is a $T_0$-line, is a Brouwer line of $\tilde f$.

We begin by using Lemma \ref{lemma:intprop}. Let $\check T$ be the generator of the group of covering automorphisms of the covering map $\check \pi: \check S\to S$ such that $\check T(\check \lambda_1)$ is on the left of $\check\lambda_1$, where $\check \lambda_1$ is a lift of $\lambda_1$ (one of the boundary curves of $A_{i_0}$). It is possible that $\check T(\check \lambda)\cap \check\lambda\not=\emptyset$ but if  $s\geq 1$ is large enough, then $\check T^s(\check\lambda)$ does not meet $\check\lambda$ and is on the left of $\check \lambda$. Replacing $S$ with $\check S/\check T^s$ and $f$ with the homeomorphism of $\check S/\check T^s$ that is lifted by $\check f$, we can always suppose that $s=1$, meaning that $\hat\lambda$ projects onto a simple loop $\lambda$ of $S$ (indeed, by Proposition  \ref{prop:NWH} the new map will be non wandering and if it has periodic  points of period arbitrarily high, so will have $f$).  

$\hat {\tilde \lambda}$ Let us explain now how we will apply the results of Sub-section \ref{s.forcing} (forcing theory). Considering the annulus $\overline{\hat S}$ and its universal covering space $\overline{\tilde S}\setminus\{\alpha(T_0), \omega(T_0)\}$, we can define the  sets $\tilde{\mathcal E}_1$ and $\tilde{\mathcal E}_2$. Recall that there is a well defined relation $\prec$ on these sets. Every lift $\tilde\lambda\not=\tilde\lambda_0$ of $\lambda$ is disjoint from $\tilde\lambda_0$. If it is on the right of $\tilde\lambda_0$ its two ``ends''  are on $\tilde J_1$, and so $\tilde\lambda$ belongs to $\tilde{\mathcal E}_1$, if it is on the left of $\tilde\lambda_0$ the ends are on $\tilde J_2$, and so $\tilde\lambda$ belongs to $\tilde{\mathcal E}_2$.  For every integers $m<n$, denote $\check S_{[m,n]}$ the compact surface bordered by $\check T^{m}(\check \lambda)$ and $\check T^{n}(\check \lambda)$. Then note respectively $\tilde S_{[-1,0]}$ and $\tilde S_{[0,1]}$ the connected component of the preimage of $\check S_{[-1,0]}$ and $\check S_{[0,1]}$ by the universal covering projection that contains $\tilde \lambda_0$ in its boundary. 
The boundary of $\tilde S_{[-1,0]}$ is a disjoint union of lifts of $\lambda$, the ones that bound $\tilde S_{[-1,0]}$ on their right side being lifts of $\check \lambda$, the ones that bound $\tilde S_{[-1,0]}$ on their left side being lifts of $\check T^{-1}(\check \lambda)$. Denote $\tilde{\mathcal L}_{1}$ the set of lifts of $\lambda$ different from $\tilde\lambda_0$, that are on the boundary of $\tilde S_{[-1,0]}$. Denote $\tilde{\mathcal L}_{1,l}\subset\tilde{\mathcal L}_{1}$ the subset of lines in   $\tilde{\mathcal L}_{1}$ that lift $\check T^{-1}(\check\lambda)$ and $\tilde{\mathcal L}_{1,r}\subset \tilde{\mathcal L}_{1}$ the subset of lines in  $\tilde{\mathcal L}_{1}$ that lift $\check\lambda$. Note also that the relation $\prec$ defines a total order on   $\tilde{\mathcal L}_{1}$, setting
$$\tilde\lambda_1\preceq \tilde \lambda'_1\Leftrightarrow \tilde\lambda_1\prec \tilde \lambda'_1\enskip\mathrm{or}\enskip \tilde\lambda_1= \tilde \lambda'_1.$$ Similarly, denote $\tilde{\mathcal L}_{2}$ the set of lifts of $\lambda$ different from $\tilde\lambda_0$ that are on the boundary of $\tilde S_{[0,1]}$, denote $\tilde{\mathcal L}_{2,l}\subset \tilde{\mathcal L}_{2}$ the subset of lines in $\tilde{\mathcal L}_{2}$  that lift $\check\lambda$ and $\tilde{\mathcal L}_{2,r}\subset \tilde{\mathcal L}_{2}$ the subset of lines in  $\tilde{\mathcal L}_{2}$ that lift $\check T(\check\lambda$). Here again,  $\prec$ induces a total order on  $\tilde{\mathcal L}_{2}$. There is a natural action of $\Z^2$ on these sets. Note first that $\tilde{\mathcal L}_1$,  $\tilde{\mathcal L}_{1,r}$, $\tilde{\mathcal L}_{1,l}$, $\tilde{\mathcal L}_2$, $\tilde{\mathcal L}_{2,r}$, $\tilde{\mathcal L}_{2,l}$, are invariant by $T_0$ because $\tilde \lambda_0$ is invariant by $T_0$. These sets are also invariant by the map $[\tilde f]$ defined on the set of lifts of $\lambda$. Moreover, $T_0$ and $[\tilde f]$ commute on our sets.

One deduces that: 
$$\begin{aligned}\tilde\lambda_1\in \tilde{\mathcal L}_{1,r},\enskip \tilde\lambda\in \tilde{\mathcal L}_1\cup \tilde{\mathcal L}_2 , \enskip k>0&\Rightarrow \tilde f^k(\tilde\lambda_1)\cap \tilde\lambda=\emptyset,\\
\tilde\lambda_1\in \tilde{\mathcal L}_{1,l},\enskip \tilde\lambda\in \tilde{\mathcal L}_1\cup \tilde{\mathcal L}_2 ,\enskip k>0&\Rightarrow \tilde f^{-k}(\tilde\lambda_1)\cap \tilde\lambda=\emptyset,\\
\tilde\lambda_2\in \tilde{\mathcal L}_{2,r},\enskip \tilde\lambda\in \tilde{\mathcal L}_1\cup \tilde{\mathcal L}_2 ,\enskip k>0&\Rightarrow \tilde f^k(\tilde\lambda_2)\cap \tilde\lambda=\emptyset,\\
\tilde\lambda_2\in \tilde{\mathcal L}_{2,l},\enskip \tilde\lambda\in\tilde{\mathcal L}_1\cup \tilde{\mathcal L}_2 ,\enskip k>0&\Rightarrow \tilde f^{-k}(\tilde\lambda_2)\cap \tilde\lambda=\emptyset.
\end{aligned}$$Consequently, it holds that
$$\begin{aligned}\tilde\lambda_1\in \tilde{\mathcal L}_{1,r},\enskip \tilde\lambda'_1\in \tilde{\mathcal L}_{1,r},\enskip k\in\Z\setminus\{0\}&\Rightarrow \tilde f^k(\tilde\lambda_1)\cap \tilde\lambda'_1=\emptyset,\\\tilde\lambda_1\in \tilde{\mathcal L}_{1,l},\enskip \tilde\lambda'_1\in \tilde{\mathcal L}_{1,l},\enskip k\in\Z\setminus\{0\}&\Rightarrow \tilde f^k(\tilde\lambda_1)\cap \tilde\lambda'_1=\emptyset,\\
\tilde\lambda_2\in \tilde{\mathcal L}_{2,r},\enskip \tilde\lambda'_2\in \tilde{\mathcal L}_{2,r},\enskip k\in\Z\setminus\{0\}&\Rightarrow \tilde f^k(\tilde\lambda_2)\cap \tilde\lambda'_2=\emptyset,\\\tilde\lambda_1\in \tilde{\mathcal L}_{2,l},\enskip \tilde\lambda'_2\in \tilde{\mathcal L}_{2,l},\enskip k\in\Z\setminus\{0\}&\Rightarrow \tilde f^k(\tilde\lambda_2)\cap \tilde\lambda'_2=\emptyset.
\end{aligned}$$

Let us state now the key result implying Proposition  \ref{prop:twisthorseshoe}. We will postpone its proof and begin by explaining how to get Proposition \ref  {prop:twisthorseshoe} from it.

\begin{proposition} \label{proposition:connections-better} There exist $\tilde \lambda_1\in \tilde{\mathcal L}_{1,l}$, $\tilde \lambda_2\in \tilde{\mathcal L}_{2,r}$, $n_0\in\N$, $\rho_-\in\R$, $\rho_+\in\R$, such that  $\rho_-<\rho_+$ and such that for every $p\in\Z$ and every $q\geq 1$, it holds that
$$\rho_-(n_0+q)<p<\rho_+(n_0+q)\Rightarrow\tilde f^{2n_0+q}(\tilde \lambda_1)\cap \tilde T_0^p(\tilde\lambda_2)\not=\emptyset.$$

Moreover, at least one of the two following properties holds:

\begin{enumerate} 

\item If $$\rho_-(n_0+q)<p<p'<\rho_+(n_0+q), $$ then there exists a segment $\tilde\sigma_1\subset \tilde\lambda_1$ such that:
\begin{itemize}
\item $\tilde f^{2n_0+q}(\tilde \sigma_1)$ joins $T_0^{p}(\tilde\lambda_2)$ and $T_0^{p'}(\tilde\lambda_2)$;
\item the interior of  $\tilde f^{2n_0+q}(\tilde\sigma_1)$ is included in $R(T_0^{p}(\tilde\lambda_2))\cap R(T_0^{p'}(\tilde\lambda_2))$;
\item  $\tilde f^{2n_0+q}(\tilde\sigma_1)$ is included in $\bigcup_{ k\leq 0}\tilde f^{-k}(L(\tilde\lambda_0))$.

\end{itemize}

\item If $$\rho_-(n_0+q)<p<p'<\rho_+(n_0+q), $$ then there exists a segment 
$\tilde\sigma_2\subset \tilde\lambda_2$ such that:

\begin{itemize}
\item $\tilde f^{-2n_0-q}(\tilde\sigma_2)$ joins $T_0^{-p}(\tilde\lambda_1)$ and $T_0^{-p'}(\tilde\lambda_1)$;
\item the interior of  $\tilde f^{-2n_0-q}(\tilde\sigma_2)$ is included in $L(T_0^{-p}(\tilde\lambda_1))\cap  L(T_0^{-p'}(\tilde\lambda_1))$;
\item $\tilde f^{-2n_0-q}(\tilde\sigma_2)$ is included in $\bigcup_{k\geq 0} \tilde f^k(R(\tilde\lambda_0))$.

\end{itemize}
\end{enumerate}
\end{proposition}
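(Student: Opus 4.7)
The plan has three ingredients: a single initial connection $\tilde\lambda_1\overset{n_0}{\longrightarrow}\tilde\lambda_2$ with $\tilde\lambda_1\in\tilde{\mathcal L}_{1,l}$ and $\tilde\lambda_2\in\tilde{\mathcal L}_{2,r}$; a forcing step that turns one such connection into the infinite family required by the statement; and a planar-topological refinement that extracts the sub-segments. First I would construct the initial connection by contradiction: if no such triple $(\tilde\lambda_1,\tilde\lambda_2,n_0)$ existed, then $T_0$-equivariance together with the six disjointness relations compiled just above the statement would force the forward $\hat f$-orbit of a suitable small open disk placed near the projection of $\tilde\lambda_1$ to $\hat S$ to remain trapped inside the essential $T_0$-invariant sub-annulus of $\hat S$ bounded (on one side) by the projection of $\tilde{\mathcal L}_{2,r}$. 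This would produce a non empty wandering open set of $\hat f$ with relatively compact forward orbit, contradicting Proposition~\ref{prop:wandering}, which applies because $\hat f$ is non wandering by Proposition~\ref{prop:NWH}~(2).

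Once the initial connection is secured, the cascade is a direct application of Proposition~\ref{prop:connexions-general} to the compact annulus $\overline{\hat S}$, whose universal cover is $\overline{\tilde S}\setminus\{\alpha(T_0),\omega(T_0)\}$, with deck generator $T_0$ and lift $\tilde f$. Hypotheses (1)--(3) there are exactly the disjointness relations already established, hypothesis (4) is the initial connection, and the boundary rotation numbers induced by $\tilde f$ are $\rho_0=0$ on $\tilde J_1$ and $\rho_1=r_{i_0}$ on $\tilde J_2$. The conclusion yields $\tilde\lambda_1\overset{2n_0+q}{\longrightarrow}T_0^p\tilde\lambda_2$ whenever $\min(0,r_{i_0})(n_0+q)<p<\max(0,r_{i_0})(n_0+q)$, so $\rho_-=\min(0,r_{i_0})$ and $\rho_+=\max(0,r_{i_0})$ deliver the intersection half of the statement.

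For the segment refinement, I fix admissible $p<p'$. The lines $T_0^p\tilde\lambda_2$ and $T_0^{p'}\tilde\lambda_2$ are disjoint and comparable under $\prec$, and both are met by $\tilde f^{2n_0+q}(\tilde\lambda_1)$; choosing two consecutive intersection points (one on each line) and the arc of $\tilde f^{2n_0+q}(\tilde\lambda_1)$ between them produces a segment $\tilde\sigma_1\subset\tilde\lambda_1$ whose image has interior disjoint from both lines, and the ordering $\prec$ (together with the fact that the endpoints of $\tilde f^{2n_0+q}(\tilde\lambda_1)$ lie on $\tilde J_1$) forces that interior to sit in $R(T_0^p\tilde\lambda_2)\cap R(T_0^{p'}\tilde\lambda_2)$. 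The inclusion $\tilde f^{2n_0+q}(\tilde\sigma_1)\subset\bigcup_{k\leq 0}\tilde f^{-k}(L(\tilde\lambda_0))$ then comes from the Brouwer-line property of $\tilde\lambda_0$, whose nested forward filtration $L(\tilde\lambda_0)\supset\tilde f(L(\tilde\lambda_0))\supset\cdots$ swallows every forward iterate of anything that starts on the right of $\tilde\lambda_0$ and eventually crosses it. Case (2) is the time-reversed dual obtained by running the same extraction with $\tilde\lambda_2$ and $\tilde f^{-1}$; a dichotomy based on which side of $\tilde\lambda_0$ the transit arc visits first ensures that at least one of (1) or (2) can always be realised.

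The principal obstacle is Step~1. In the non wandering (as opposed to measure preserving) setting, Poincar\'e recurrence for lifts is unavailable, so the first intersection must be extracted from the wandering set criterion of Proposition~\ref{prop:wandering}; the delicate point is verifying that the hypothetical trapped region is genuinely relatively compact after projection to $\hat S$, given the open annular geometry and the fact that forward iterates of $\tilde\lambda_1$ might in principle cross lines of $\tilde{\mathcal L}_{1,r}$ or $\tilde{\mathcal L}_{2,l}$ that are not directly controlled by the six relations.
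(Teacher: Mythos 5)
Your proposal falls into exactly the trap that the paper's author warns against in the remark at the end of Section~\ref{s.dehn}. You apply Proposition~\ref{prop:connexions-general} on the whole compactified annulus $\overline{\hat S}$, using the boundary rotation numbers $0$ on $\tilde J_1$ and $r_{i_0}$ on $\tilde J_2$. This does yield the intersection part and the first two bullets of item~(1), with $\rho_-=\min(0,r_{i_0})$ and $\rho_+=\max(0,r_{i_0})$. But it does \emph{not} give the third bullet, $\tilde f^{2n_0+q}(\tilde\sigma_1)\subset\bigcup_{k\leq 0}\tilde f^{-k}(L(\tilde\lambda_0))=\tilde U_1$, which is the crucial ingredient used in the proof of Proposition~\ref{proposition:connections-better-better}. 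The paper states explicitly: ``There is no reason why such a condition will be satisfied even if $q$ is large enough. This is why we must work in $\tilde U_0$, $\tilde U_1$ or in $\tilde U_2$.''

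Your claim that ``the nested forward filtration $L(\tilde\lambda_0)\supset\tilde f(L(\tilde\lambda_0))\supset\cdots$ swallows every forward iterate of anything that starts on the right of $\tilde\lambda_0$ and eventually crosses it'' is where the gap hides. The phrase ``eventually crosses it'' is doing all the work and is exactly what is not guaranteed: a point of $\tilde\lambda_1$ need not belong to $\tilde U_1$ at all, since its forward orbit may remain in $R(\tilde\lambda_0)$ forever. The segment $\tilde\sigma_1\subset\tilde\lambda_1$ that the forcing proposition produces in $\overline{\hat S}$ could very well contain such points, in which case its image under $\tilde f^{2n_0+q}$ is not contained in $\tilde U_1$. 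To guarantee the third bullet one has to extract $\tilde\sigma_1$ inside a single connected component $\tilde\delta_1\in\tilde{\mathcal D}_1$ of $\tilde U_1\cap\bigl(\bigcup_{\tilde\lambda\in\tilde{\mathcal L}_1}\tilde\lambda\bigr)$, and then apply the forcing proposition on one of the prime-end compactifications of the $\tilde f$-invariant sub-annuli $\tilde U_0$, $\tilde U_1$, $\tilde U_2$. This introduces new boundary rotation numbers $\rho_1$ (on the prime-end circle $\hat S_1$) and $\rho_2$ (on $\hat S_2$) that are unrelated to $0$ and $r_{i_0}$, and one must split the argument into the cases $\rho_1\neq r_{i_0}$, $\rho_2\neq 0$, and $\rho_1=r_{i_0},\rho_2=0$, noting that at least one case always applies because a boundary twist condition is satisfied on one of the three sub-annuli. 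Your $\rho_\pm$ are therefore generally wrong, and your asserted ``dichotomy based on which side of $\tilde\lambda_0$ the transit arc visits first'' does not engage with the real obstruction. Your Step~1 (the initial connection via Proposition~\ref{prop:wandering}) is roughly aligned with Lemma~\ref{lemma:connections}, but the forcing step is where the proof genuinely breaks down.
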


We will suppose from now on that Proposition  \ref{proposition:connections-better} is true. We can deduce the following:
\begin{proposition} \label{proposition:connections-better-better} {\it At least one of the two following statements is true:

\begin{enumerate}
\item

There exists  $\tilde \lambda_1\in \tilde{\mathcal L}_{1,l}$ such that for every $s\geq 2$, there exists $m_s\geq 0$ such that for every $m\geq m_s$, there exists $\tilde \lambda_2\in \tilde{\mathcal L}_{2,r}$ such that for every $0<p<p'\leq s$, there exists a segment $\tilde\sigma_1\subset \tilde\lambda_1$  satisfying:
\begin{itemize}
\item $\tilde f^{m}(\tilde \sigma_1)$ joins $T_0^{p}(\tilde\lambda_2)$ and $T_0^{p'}(\tilde\lambda_2)$;
\item the interior of  $\tilde f^{m}(\tilde\sigma_1)$ is included in $R(T_0^{p}(\tilde\lambda_2))\cap R(T_0^{p'}(\tilde\lambda_2))$;
\item  $\tilde f^{m}(\tilde\sigma_1)$ is included in $L(\tilde\lambda_0)$.
\end{itemize}
\item There exists  $\tilde \lambda_2\in \tilde{\mathcal L}_{2,r}$ such that for every $s\geq 2$ there exists $m_s\geq 0$ such that for every $m\geq m_s$, there exists $\tilde \lambda_1\in \tilde{\mathcal L}_{1,l}$ such that for every $0<p<p'\leq s $, there exists a segment $\tilde\sigma_2\subset \tilde\lambda_2$  satisfying:
\begin{itemize}
\item $\tilde f^{-m}(\tilde \sigma_2)$ joins $T_0^{-p}(\tilde\lambda_1)$ and $T_0^{-p'}(\tilde\lambda_1)$;
\item the interior of  $\tilde f^{-m}(\tilde\sigma_2)$ is included in $L(T_0^{-p}(\tilde\lambda_1))\cap L(T_0^{-p'}(\tilde\lambda_1))$;
\item  $\tilde f^{-m}(\tilde\sigma_2)$ is included in $R(\tilde\lambda_0)$.
\end{itemize}

\end{enumerate}}
\end{proposition}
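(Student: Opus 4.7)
My plan is to deduce Proposition \ref{proposition:connections-better-better} from Proposition \ref{proposition:connections-better} by a direct translation trick that exploits the $T_0$-invariance of the sets $\tilde{\mathcal L}_{1,l}$ and $\tilde{\mathcal L}_{2,r}$. By symmetry, it suffices to show that case (1) of Proposition \ref{proposition:connections-better} implies case (1) of Proposition \ref{proposition:connections-better-better}; case (2) is handled identically, using the dual Brouwer condition $\tilde f^{-1}(\overline{R(\tilde\lambda_0)})\subset R(\tilde\lambda_0)$ instead. So I fix the data $\tilde\lambda_1\in \tilde{\mathcal L}_{1,l}$, $\tilde\lambda_2^{(0)}\in\tilde{\mathcal L}_{2,r}$, $n_0\geq 1$ and $\rho_-<\rho_+$ provided by Proposition \ref{proposition:connections-better}(1), and I keep the same line $\tilde\lambda_1$ for every $s$ and every $m$.

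The idea is that the rotation window $(\rho_-(n_0+q),\rho_+(n_0+q))$ has length $(\rho_+-\rho_-)(n_0+q)$, which grows without bound with $q$, so for $q$ large it contains any translate of the interval $\{1,\dots,s\}$; the action of $T_0$ then moves that translate back to $\{1,\dots,s\}$ and produces the required $\tilde\lambda_2$. Concretely, given $s\geq 2$, let $q_s$ be the smallest integer $\geq 1$ with $(\rho_+-\rho_-)(n_0+q_s)>s+1$ and set $m_s=2n_0+q_s$. For any $m\geq m_s$, write $q=m-2n_0\geq q_s$; then the open interval $(\rho_-(n_0+q),\,\rho_+(n_0+q)-s)$ has length strictly greater than $1$, so it contains an integer $k$. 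I set $\tilde\lambda_2=T_0^k(\tilde\lambda_2^{(0)})\in\tilde{\mathcal L}_{2,r}$, using that $\tilde{\mathcal L}_{2,r}$ is $T_0$-invariant, as recorded before the statement.

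Now for any pair $0<p<p'\leq s$ one has $\rho_-(n_0+q)<k+p<k+p'<\rho_+(n_0+q)$, so Proposition \ref{proposition:connections-better}(1) applied to the integers $k+p$ and $k+p'$ supplies a segment $\tilde\sigma_1\subset\tilde\lambda_1$ whose image $\tilde f^m(\tilde\sigma_1)=\tilde f^{2n_0+q}(\tilde\sigma_1)$ joins $T_0^{k+p}(\tilde\lambda_2^{(0)})=T_0^p(\tilde\lambda_2)$ to $T_0^{k+p'}(\tilde\lambda_2^{(0)})=T_0^{p'}(\tilde\lambda_2)$, has interior contained in $R(T_0^p(\tilde\lambda_2))\cap R(T_0^{p'}(\tilde\lambda_2))$, and lies in $\bigcup_{j\geq 0}\tilde f^j(L(\tilde\lambda_0))$. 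The sharpened third conclusion $\tilde f^m(\tilde\sigma_1)\subset L(\tilde\lambda_0)$ is an immediate consequence of the Brouwer line property of $\tilde\lambda_0$ already established above: since $\tilde f(\overline{L(\tilde\lambda_0)})\subset L(\tilde\lambda_0)$, induction gives $\tilde f^j(L(\tilde\lambda_0))\subset L(\tilde\lambda_0)$ for every $j\geq 0$, and the union collapses to $L(\tilde\lambda_0)$ itself. There is no serious obstacle in this deduction; the argument is a purely combinatorial shift, and the only quantitative content is the existence of the integer $k$ in the shifted rotation window, which is precisely what forces the lower bound $m\geq m_s$.
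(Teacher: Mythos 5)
Your reduction contains a genuine gap, and it lies exactly where you declare ``there is no serious obstacle.'' The third bullet of Proposition~\ref{proposition:connections-better}(1) records that $\tilde f^{2n_0+q}(\tilde\sigma_1)$ lies in the $\tilde f$-invariant open annulus $\tilde U_1=\bigcup_{k\geq 0}\tilde f^{-k}(L(\tilde\lambda_0))$ (the index ``$k\leq 0$'' printed in the statement is a typographical slip for ``$k\geq 0$''; this is unambiguous from the proof of Proposition~\ref{proposition:connections-better}, where the segment $\tilde\sigma_1$ is drawn from a line $\tilde\delta_1\subset\tilde U_1$ and $\tilde U_1$ is $\tilde f$-invariant, and from the closing remark of Section~\ref{s.dehn}, which explains that this very containment is the reason one must work in $\tilde U_0$, $\tilde U_1$ or $\tilde U_2$ rather than in $\tilde S$). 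Your argument reads the union literally as $\bigcup_{j\geq 0}\tilde f^{\,j}(L(\tilde\lambda_0))$, observes correctly that this collapses to $L(\tilde\lambda_0)$ by the Brouwer property, and concludes that the third item already gives $\tilde f^{m}(\tilde\sigma_1)\subset L(\tilde\lambda_0)$. But $\tilde U_1$ is strictly larger than $L(\tilde\lambda_0)$: one has $L(\tilde\lambda_0)\subsetneq\tilde f^{-1}(L(\tilde\lambda_0))\subsetneq\cdots$, so a point of $\tilde U_1$ may perfectly well lie on the right of $\tilde\lambda_0$. Under the literal reading, Proposition~\ref{proposition:connections-better}(1) would already assert the strong third conclusion, and the paper's subsequent extraction of integers $a_\kappa\geq 0$ with $\tilde f^{a+2n_0+q}(\tilde\sigma_{1,\kappa})\subset L(\tilde\lambda_0)$ for $a\geq a_\kappa$ would be a triviality, which it plainly is not meant to be.

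Concretely, the missing step is the forward iteration that the paper performs: since the compact segment $\tilde f^{2n_0+q}(\tilde\sigma_{1,\kappa})$ lies in $\tilde U_1$, it lies in some $\tilde f^{-a_\kappa}(L(\tilde\lambda_0))$, and only after applying $\tilde f^a$ with $a\geq a_\kappa$ does it land in $L(\tilde\lambda_0)$. This is what forces $m=a+2n_0+q$ to range over $[m_s,\infty)$ rather than over the single value $2n_0+q$, and it is also why the line $\tilde\lambda_2$ appearing in the conclusion of Proposition~\ref{proposition:connections-better-better} depends on $m$ through $[\tilde f]^{\,m-(2n_0+q)}\bigl(T_0^b(\tilde\lambda_2)\bigr)$, not merely through a $T_0$-translate as in your construction. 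Your translation trick (choosing an integer $k$ in the shifted window and replacing $\tilde\lambda_2^{(0)}$ by $T_0^k(\tilde\lambda_2^{(0)})$) is a sound way to arrange $\rho_-(n_0+q)<k+p<k+p'<\rho_+(n_0+q)$; the paper does essentially the same thing with its integer $b$. What you still need is the refinement step: after pushing forward by $\tilde f^{a}$ to enter $L(\tilde\lambda_0)$, the image arc joins $L([\tilde f]^a(T_0^{p+b}(\tilde\lambda_2)))$ to $L([\tilde f]^a(T_0^{p'+b}(\tilde\lambda_2)))$, and one must pass to a sub-segment $\tilde\sigma'_{1,\kappa}\subset\tilde\sigma_{1,\kappa}$ whose image joins the two lines and has interior on their common right. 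That refinement is precisely the content the statement is recording, and without it the proposition does not follow.
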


\begin{proof}

Suppose that the first situation of Proposition \ref{proposition:connections-better} holds. Fix $s\geq 2$ and denote $\mathcal K$ the (finite) set of couples $(p,p')$ such that $0<p<p'\leq s$. There exist $b\in\Z$ and $q\geq 1$ such that, for every $(p,p')\in\mathcal K$ it holds that $$\rho_-(n_0+q)<p+b<p'+b<\rho_+(n_0+q).$$For each couple $\kappa=(p,p')\in \mathcal K$, one can choose a segment $\tilde\sigma_{1,\kappa}$ satisfying the three items of  Proposition \ref{proposition:connections-better} (1). The third item tells us that there exists $a_{\kappa}\geq 0$ such that  $\tilde f^{a+2n_0+q}(\tilde\sigma_{1,\kappa})\subset L(\tilde\lambda_0)$ if $a\geq a_{\kappa}$.  Moreover  $\tilde f^{a+2n_0+q}(\tilde\sigma_{1,\kappa})$ joins  $L([\tilde f]^{a}(T_0^{p+b}(\tilde\lambda_2)))$ and $L([\tilde f]^{a}(T_0^{p'+b}(\tilde\lambda_2)))$ and so contains a segment that joins $[\tilde f]^{a}(T_0^{p+b}(\tilde\lambda_2))$ and $[\tilde f]^{a}(T_0^{p'+b}(\tilde\lambda_2))$ and whose interior is included in $R([\tilde f]^{a}(T_0^{p+b}(\tilde\lambda_2)))$ and $R([\tilde f]^{a}(T_0^{p'+b}(\tilde\lambda_2)))$. This segment can be written $\tilde f^{a+2n_0+q}(\tilde\sigma'_{1,\kappa})$ where $ \tilde\sigma'_{1,\kappa}$ is a segment included in $ \tilde\sigma_{1,\kappa}$.  Set $a_{\mathrm{max}}=\max_{\kappa\in K} a_{\kappa}$ and $m_s= a_{\kappa}+2n_0+q$. For every $m\geq m_s$ and for every $\kappa=(p,p')\in \mathcal{K}$, there exists a segment $ \tilde\sigma'_{1,\kappa}\subset \tilde\lambda_1$ satisfying:
\begin{itemize}
\item $\tilde f^{m}(\tilde\sigma'_{1,\kappa})$ joins $T_0^{p}([\tilde f]^{m-2n_0-q}(T_0^b(\tilde\lambda_2)))$ and $T_0^{p'}([\tilde f]^{m-2n_0-q}(T_0^b(\tilde\lambda_2)))$;
\item the interior of  $\tilde f^{m}(\tilde\sigma'_{1,\kappa})$ is included in $R(T_0^{p}([\tilde f]^{m-2n_0-q}(T_0^b(\tilde\lambda_2))))$ and in $R(T_0^{p'}([\tilde f]^{m-2n_0-q}(T_0^b(\tilde\lambda_2))))$;
\item  $\tilde f^{m}(\tilde\sigma'_{1,\kappa})$ is included in $L(\tilde\lambda_0)$.
\end{itemize}
So (1) is satisfied. If the second situation of Proposition \ref{proposition:connections-better} holds, one prove similarly that (2) is satisfied.

\end{proof}

Finally we get:

\begin{proposition} \label{prop:periodic} There exists a sequence $(T_m)_{m\geq m_5}$ in $G$ such that each $T_m$ sends $\tilde S_{[-1,0]}$ onto $\tilde S_{[0,1]}$ and such that $\tilde f^{m}\circ T_m^{-1}$ has a fixed point.
\end{proposition}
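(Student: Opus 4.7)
The strategy is to apply Proposition \ref{prop:fixedpoint} to $g_m := \tilde f^m\circ T_m^{-1}$: a fixed point of $g_m$ projects to a point $z\in S$ with $f^m(z)=z$. I first apply Proposition \ref{proposition:connections-better-better}; up to replacing $f$ by $f^{-1}$, which swaps cases (1) and (2) along with the roles of $\tilde\lambda_1,\tilde\lambda_2$ and of $L(\tilde\lambda_0),R(\tilde\lambda_0)$, I may assume case (1) holds. So for each $m\geq m_5$ I get a line $\tilde\lambda_{2,m}\in\tilde{\mathcal L}_{2,r}$ and, for every $0<p<p'\leq 5$, a segment $\tilde\sigma^{(p,p')}\subset\tilde\lambda_1$ such that $\tilde f^m(\tilde\sigma^{(p,p')})$ joins $T_0^p(\tilde\lambda_{2,m})$ to $T_0^{p'}(\tilde\lambda_{2,m})$ with interior in $R(T_0^p(\tilde\lambda_{2,m}))\cap R(T_0^{p'}(\tilde\lambda_{2,m}))\cap L(\tilde\lambda_0)$.

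Next I choose $T_m$. The elements of $G$ that lift $\check T\in\mathrm{Deck}(\check S\to S)$ and send $\tilde S_{[-1,0]}$ onto $\tilde S_{[0,1]}$ form a single left coset of $\langle T_0\rangle$, since $T_0\in\ker(G\to\mathrm{Deck}(\check S\to S))$ stabilizes both strips; for any such $T_m$ one has $T_m(\tilde\lambda_0)\in\tilde{\mathcal L}_{2,r}$ and $T_m(\tilde\lambda_1)\in\tilde{\mathcal L}_{2,l}$. Setting $T''_0:=T_mT_0T_m^{-1}\in G$, one gets the commutation relation $g_m\circ T''_0=T_0\circ g_m$, as a direct consequence of $\tilde f\circ T_0=T_0\circ\tilde f$. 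Using the freedom in the coset, I select the representative so that the ideal endpoints of $T_m(\tilde\lambda_1)$ on $\partial\tilde S$ are placed in the cyclic order strictly between those of $T_0^{2}(\tilde\lambda_{2,m})$ and $T_0^{3}(\tilde\lambda_{2,m})$.

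I then apply Proposition \ref{prop:fixedpoint} to $g_m$ with
\[
\mu_1=T_m(\tilde\lambda_1),\quad \mu_2=T_0^{2}(\tilde\lambda_{2,m}),\quad \mu_3=T_m(T_0\,\tilde\lambda_1)=T''_0\,\mu_1,\quad \mu_4=T_0^{3}(\tilde\lambda_{2,m}),
\]
and segments $\sigma_1=T_m(\tilde\sigma^{(2,3)})\subset\mu_1$, $\sigma_3=T_m(T_0\,\tilde\sigma^{(1,2)})\subset\mu_3$. Using $g_m\circ T_m=\tilde f^m$ and $g_m\circ T''_0=T_0\circ g_m$, one computes $g_m(\sigma_1)=\tilde f^m(\tilde\sigma^{(2,3)})$ and $g_m(\sigma_3)=T_0\cdot\tilde f^m(\tilde\sigma^{(1,2)})$, each of which is an arc joining $\mu_2$ to $\mu_4$; this provides the crossings $g_m(\sigma_i)\cap\mu_j\neq\emptyset$ for $i\in\{1,3\}$, $j\in\{2,4\}$. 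The non-intersection conditions $g_m(\sigma_i)\cap\mu_i=\emptyset$ follow from the interior clause of the connection property together with the chosen position of $T_m(\tilde\lambda_1)$; the freeness $g_m(\mu_j)\cap\mu_j=\emptyset$ for $j\in\{2,4\}$ reduces, after expanding $g_m(\mu_j)=\tilde f^m\bigl(T_m^{-1}(T_0^{p_j}\tilde\lambda_{2,m})\bigr)$, to one of the disjointness identities for iterates of $\tilde f$ displayed just before Proposition \ref{proposition:connections-better}.

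The hard part will be the geometric verification that $(\mu_1,\mu_2,\mu_3,\mu_4)$ is cyclically ordered in the sense of Subsection \ref{s.criterion}: the pairs $\{\mu_1,\mu_3\}$ and $\{\mu_2,\mu_4\}$ come from different $G$-orbits of lifts of $\lambda$ (lifts of $\check\lambda$ versus lifts of $\check T(\check\lambda)$) and sit on opposite boundary sides of the strip $\tilde S_{[0,1]}$, so the required alternation of their ideal endpoints on $\partial\tilde S$ is not automatic. It must be arranged by the placement of $T_m$ from the previous paragraph, using that the endpoints of $T_0^p(\tilde\lambda_{2,m})$ and of $T''_0{}^a T_m(\tilde\lambda_1)$ both accumulate at $\alpha(T_0),\omega(T_0)$, together with Homma's theorem as in the proof of Proposition \ref{prop:fixedpoint}. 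Once this cyclic ordering is established, Proposition \ref{prop:fixedpoint} yields a fixed point of $g_m$, concluding the proof.
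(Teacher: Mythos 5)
Your overall strategy (feed data from Proposition~\ref{proposition:connections-better-better} into the fixed point criterion of Proposition~\ref{prop:fixedpoint} applied to $g_m=\tilde f^m\circ T_m^{-1}$, then project) is indeed the paper's, and the treatment of case (2) by symmetry, the segment choices, and the identification of the crossing and free conditions are all in the right spirit. But there is a genuine error in your choice and description of $T_m$, and it propagates into the step you yourself flag as ``the hard part.''

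Your claim that the elements of $G$ lifting $\check T$ and carrying $\tilde S_{[-1,0]}$ onto $\tilde S_{[0,1]}$ form a single left coset of $\langle T_0\rangle$ is false. That set is a coset of $\mathrm{Stab}_G(\tilde S_{[-1,0]})\cong\pi_1(\check S_{[-1,0]})$, a non-abelian free group strictly larger than $\langle T_0\rangle$. Moreover your own description is internally inconsistent: in the $\langle T_0\rangle$-coset $T_0^k T_1$ of a $T_1$ with $T_1(\tilde\lambda_1)=\tilde\lambda_0$, every element satisfies $T_m(\tilde\lambda_1)=T_0^k(\tilde\lambda_0)=\tilde\lambda_0$, so one cannot place $T_m(\tilde\lambda_1)$ ``strictly between'' $T_0^2(\tilde\lambda_{2,m})$ and $T_0^3(\tilde\lambda_{2,m})$; in any other $\langle T_0\rangle$-coset, $T_m(\tilde\lambda_1)$ does lie in $\tilde{\mathcal L}_{2,l}$, but then the normalisation is far from unique, and, crucially, as $T_m$ ranges over that coset, $\mu_1=T_m(\tilde\lambda_1)$ and $\mu_3=T_mT_0(\tilde\lambda_1)$ move \emph{together} by $T_0^k$, so constraining $\mu_1$ leaves $\mu_3$ completely uncontrolled. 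In particular $\mu_3$ could well land between the same two arches $T_0^2(\tilde\lambda_{2,m})$ and $T_0^3(\tilde\lambda_{2,m})$ as $\mu_1$, in which case the family $(\mu_1,\mu_2,\mu_3,\mu_4)$ is \emph{not} cyclically ordered. The ``geometric verification'' you defer does not hold under your choice.

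The paper's normalisation avoids exactly this. It fixes $T_m$ by the two conditions $T_m(\tilde\lambda_1)=\tilde\lambda_0$ and $T_0^3(\tilde\lambda_2)\prec T_mT_0(\tilde\lambda_1)\prec T_0^4(\tilde\lambda_2)$; the first pins $T_m$ to the correct $\langle T_0\rangle$-coset (where the $T_0^k$-action shifts $T_mT_0(\tilde\lambda_1)$, while $T_m(\tilde\lambda_1)$ stays equal to $\tilde\lambda_0$), and the second then determines $T_m$ uniquely. With $\mu_1=\tilde\lambda_0$ the cyclic ordering is obvious ($\tilde\lambda_0$ is the $T_0$-line ``capping off'' the arches, and the remaining three lines are arches on $\tilde J_2$ with $\mu_3$ separated from $T_0^i(\tilde\lambda_2)$, $T_0^j(\tilde\lambda_2)$ by construction), and the third clause of Proposition~\ref{proposition:connections-better-better} -- that $\tilde f^m(\tilde\sigma_1)\subset L(\tilde\lambda_0)$ -- gives $g_m(\sigma_1)\cap\mu_1=\emptyset$ directly. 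Your alternative disjointness argument for $\mu_1$ (and the claim that it ``follows from the interior clause'' and the placement) is not justified; the interior clause only gives membership in $R(T_0^2)\cap R(T_0^3)\cap L(\tilde\lambda_0)$, which $\mu_1$ also satisfies, so it says nothing about disjointness. Finally, like the paper you must also take care that $T_0^2(\tilde\lambda_{2,m})$ and $T_0^3(\tilde\lambda_{2,m})$ differ from $T_m(\tilde\lambda_0)$, else the listed disjointness identities (which exclude $\tilde\lambda_0$) do not apply to $g_m(\mu_j)\cap\mu_j$; the paper's choice of $i\in\{2,3\}$, $j\in\{4,5\}$ exists precisely to dodge this, and your fixed choice $(2,3)$ overlooks it.
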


\begin{proof}
Suppose that the first item of Proposition  \ref{proposition:connections-better-better} holds and denote $\tilde \lambda_1$ the line of $\tilde{\mathcal L}_{1,l}$ defined in Proposition  \ref{proposition:connections-better-better}. Fix $m\geq m_5$ and denote $\tilde \lambda_2$ the line of $ \tilde{\mathcal L}_{2,r}$ defined in Proposition  \ref{proposition:connections-better-better}. If $T\in G$ sends $\tilde\lambda_1$ onto $\tilde\lambda_0$, it sends $\tilde S_{[-1,0]}$ onto $\tilde S_{[0,1]}$ and  $T_0(\tilde\lambda_1)$ onto an element of $\tilde{\mathcal{L}}_{2,l}$. Moreover 
$T'\in G$ sends $\tilde\lambda_1$ onto $\tilde\lambda_0$ if and only if there exists $k\in\Z$ such that $T'=T_0^kT$. So, there exists an automorphism $T_m\in G$, uniquely defined,  such that 
$$T_m(\tilde\lambda_1)=\tilde\lambda_0\enskip\mathrm{and} \enskip T_0^3(\tilde\lambda_2)\prec T_m(T_0(\tilde\lambda_1))\prec T_0^4(\tilde\lambda_2),$$
and $T_m$ sends $\tilde S_{[-1,0]}$ onto $\tilde S_{[0,1]}$. Note that $T_m$ sends $\tilde\lambda_0$ onto an element of  $\tilde{\mathcal{L}}_{2,r}$. Consider the map $\tilde g=\tilde f^{m}\circ T_m^{-1}$. Every line $\tilde\lambda_2'\in \tilde{\mathcal{L}}_{2,r}\setminus T_m(\tilde \lambda_0)$ is sent  by $T_m^{-1}$ onto an element of $\tilde{\mathcal{L}}_{1,r}$ and its image by $\tilde g$ belongs to $L([\tilde f]^{m}(T_m^{-1}(\tilde \lambda'_2)))$. In particular, $\tilde g (\tilde \lambda'_2)\cap \tilde\lambda'_2=\emptyset$.    Every line $\tilde\lambda_2'\in \tilde{\mathcal{L}}_{2,l}$ is sent  by $T_m^{-1}$ onto an element of $\tilde{\mathcal{L}}_{1,l}$ and its image by $\tilde f^{-m}$ belongs to $R([\tilde f]^{-m}( \lambda'_2))$. The images of $\tilde \lambda'_2$ by $T_m^{-1}$ and by $\tilde f^{-m}$ being disjoint, it holds that $\tilde g (\tilde \lambda'_2)\cap \tilde\lambda'_2=\emptyset$. In fact the only lines on the boundary of $\tilde S_{[0,1]}$ that can meet their image by $\tilde g$ are $\tilde\lambda_0$ and $T_m(\tilde\lambda_0)$. One can choose $i\in\{2,3\}$ and $j\in\{4,5\}$ such that  $T_0^i(\tilde\lambda_2)\not=T_m(\tilde\lambda_0)$ and $T_0^j(\tilde\lambda_2)\not=T_m(\tilde\lambda_0)$. Applying Proposition  \ref{proposition:connections-better-better} to the pairs $(i,j)$ and $(i-1,j-1)$ and using the fact that $\tilde f$ and $T_0$ commute, we deduce that there exist a segment $\tilde\sigma_1\subset \tilde\lambda_1$  and a segment $\tilde\sigma'_1\subset T_0(\tilde\lambda_1)$ satisfying:
\begin{itemize}
\item $\tilde f^{m}(\tilde \sigma_1)$ and $\tilde f^{m}(\tilde \sigma'_1)$ join $T_0^{i}(\tilde\lambda_2)$ and $T_0^{j}(\tilde\lambda_2)$;
\item the interior of  $\tilde f^{m}(\tilde\sigma_1)$ and $\tilde f^{m}(\tilde \sigma'_1)$ are included in $R(T_0^{i}(\tilde\lambda_2))$ and in $R(T_0^{j}(\tilde\lambda_2))$;
\item  $\tilde f^{m}(\tilde\sigma_1)$ and $\tilde f^{m}(\tilde \sigma'_1)$ are included in $L(\tilde\lambda_0)$.

\end{itemize}

Let us summarize the situation:

\begin{itemize}
\item $T_m(\tilde\sigma_1)$ is a segment of $\tilde\lambda_0$ whose image by $\tilde g$ is disjoint from $\widetilde \lambda_0$ and joins $T_0^{i}(\tilde\lambda_2)$ and $T_0^{j}(\tilde\lambda_2)$;  
\item  $T_m(\tilde\sigma'_1)$ is a segment of $T_mT_0(\tilde\lambda_1)$ whose image by $\tilde g$ is disjoint from $T_mT_0(\tilde\lambda_1)$ and joins $T_0^{i}(\tilde\lambda_2)$ and $T_0^{j}(\tilde\lambda_2)$;
\item  $T_0^{i}(\tilde\lambda_2)$ and $T_0^{j}(\tilde\lambda_2)$ are disjoint from there image by $\tilde g$;
\item the lines $\tilde\lambda_0$, $T_0^{j}(\tilde\lambda_2)$, $T_mT_0(\tilde\lambda_1)$ and   $T_0^{i}(\tilde\lambda_2)$ are cyclically ordered.
\end{itemize}

We can use Proposition \ref{prop:fixedpoint} and deduce that $g$ has a fixed point.

 In the case where the second item of Proposition  \ref{proposition:connections-better-better}  holds, we can prove similarly that for every $m\geq m_5$, there exists $T_m\in G$ that sends $\tilde S_{[-1,0]}$ onto $\tilde S_{[0,1]}$, such that $\tilde f^{-m}\circ T_m$ has a fixed point. Note that the image by $T_m$ of the fixed point of  $\tilde f^{-m}\circ T_m$ is a fixed point of  $\tilde f^{m}\circ T_m^{-1}$.
\end{proof}

It remains to explain why Proposition \ref {prop:periodic} implies Proposition  \ref{prop:twisthorseshoe}:
 
\begin{proof}[Proof of Proposition \ref{prop:twisthorseshoe}]  For every $m\geq m_5$, one can find  $\tilde z_m\in\tilde S$ such that $\tilde f^{m}(\tilde z_m)=T_m(\tilde z_m)$ (just take the image by $ T_m^{-1}$ of a fixed point of $\tilde f^{m}\circ T_m^{-1}$).  It projects onto a point $\check z_m\in\check  S$ such that $\check f^{m}(\check z_m)=\check T(\check z_m)$ which itself projects onto a point $z_m\in S$ such that $f^{m}(z_m)=z_m$. It remains to show that the period of $z_m$ is $m$. Suppose that $f^{q}(z_m) =z_m$, where $q\vert m$. Then there exists $p\in\Z$ such that $\check f^{q}(\check z_m)=\check T^p(\check z_m)$ and so $\check f^{m}(\check z_m)=\check T^{pm/q}(\check z_m)$, which implies that $pm=q$. Of course this implies that $m=q$ and $p=1$. \end{proof}

To conclude the section, it remains to prove Proposition  \ref{proposition:connections-better}. Let us begin with the following result:

\begin{lemma} \label{lemma:connections} There exists $\tilde \lambda_1\in \tilde{\mathcal L}_{1,l}$,  $\tilde \lambda_2\in \tilde{\mathcal L}_{2,r}$ and $n\geq 1$ such that $\tilde f^{n} (\tilde\lambda_1)\cap\tilde\lambda_2\not=\emptyset$. 

\end{lemma}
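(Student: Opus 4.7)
My approach is to argue by contradiction: assume $\tilde f^n(\tilde\lambda_1) \cap \tilde\lambda_2 = \emptyset$ for every $\tilde\lambda_1\in\tilde{\mathcal L}_{1,l}$, every $\tilde\lambda_2\in\tilde{\mathcal L}_{2,r}$ and every $n\geq 1$. The plan is to first produce a downstairs intersection in the cyclic cover $\check S$ using Proposition \ref{prop:wandering}, and then to lift it back to $\tilde S$ with the correct line choices, contradicting the standing assumption.

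Working in $\check S$: the convention $\check f(\check\lambda)\cap\check\lambda=\emptyset$ together with $\check f(\check\lambda)\subset\check L(\check\lambda)$ iterates to $\check f^{n+1}(\check\lambda)\subset \check L(\check f^n(\check\lambda))$, so the open strips $\check V_n:=\check L(\check f^n(\check\lambda))\cap \check R(\check f^{n+1}(\check\lambda))$ are pairwise disjoint and $\check V_0$ is a non-empty wandering open set of $\check f$. Since $\check f$ is non wandering on $\check S$ by Proposition \ref{prop:NWH}(2), Proposition \ref{prop:wandering} forces $\bigcup_{n\geq 0}\check V_n$ to be not relatively compact. Its closure is connected, as consecutive strips share the curve $\check f^{n+1}(\check\lambda)$ in their boundary, and is contained in $\overline{\check L(\check\lambda)}$, a closed region containing exactly one end of $\check S$, namely the end toward which the loops $\check T^k(\check\lambda)$ accumulate as $k\to+\infty$. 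Being connected and not relatively compact, this closure must accumulate at that end, and hence meet every separating loop $\check T^k(\check\lambda)$ with $k\geq 1$. Specializing to $k=2$ and discarding the only boundary curve of $\check V_0$ that might be excluded, namely $\check f^0(\check\lambda)=\check\lambda$ (which is disjoint from $\check T^2(\check\lambda)$ as distinct deck translates), some $N\geq 1$ satisfies $\check f^N(\check\lambda)\cap \check T^2(\check\lambda)\neq\emptyset$; equivalently, using $\check f\check T=\check T\check f$, there exists $\check z\in \check f^N(\check T^{-1}(\check\lambda))\cap \check T(\check\lambda)$.

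Lifting to $\tilde S$: the remaining step, which I expect to be the main obstacle, is to promote this downstairs intersection to one involving specifically lifts $\tilde\lambda_1\in\tilde{\mathcal L}_{1,l}$ and $\tilde\lambda_2\in\tilde{\mathcal L}_{2,r}$. Fix $\tilde\lambda_2\in\tilde{\mathcal L}_{2,r}$ (non-empty by construction), pick $\tilde z\in\tilde\lambda_2$ lifting $\check z$, and set $\tilde w:=\tilde f^{-N}(\tilde z)$, which lies on a specific line $\tilde\mu$ that lifts $\check T^{-1}(\check\lambda)$. One must verify that, by adjusting the choice of $\tilde z$ among the $H$-orbit of preimages of $\check z$ -- equivalently by translating $\tilde\lambda_2$ inside $\tilde{\mathcal L}_{2,r}$ by an element of the stabilizer of $\tilde S_{[0,1]}$ in $G$ -- the resulting line $\tilde\mu$ can be arranged to belong to $\tilde{\mathcal L}_{1,l}$. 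This exploits that $T_0$ commutes with $\tilde f$ and preserves each of the sets $\tilde{\mathcal L}_{1,l}$ and $\tilde{\mathcal L}_{2,r}$, that the induced automorphism $[\tilde f]$ preserves the subgroup $H=\pi_1(\check S)$, and the geometric fact that $\tilde S_{[-1,0]}$ and $\tilde S_{[0,1]}$ share the Brouwer line $\tilde\lambda_0$, so that a preimage of $\check z$ on a line of $\tilde{\mathcal L}_{2,r}$ adjacent to $\tilde\lambda_0$ has its $\tilde f^{-N}$-image lying on a line of $\tilde{\mathcal L}_{1,l}$ adjacent to $\tilde\lambda_0$ as well. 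Once this lift configuration is arranged, the resulting triple $(\tilde\lambda_1,\tilde\lambda_2,N)$ contradicts the standing assumption and proves the lemma.
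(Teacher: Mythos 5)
The overall strategy you adopt---first manufacture a downstairs intersection $\check f^{N}(\check\lambda)\cap\check T^{2}(\check\lambda)\neq\emptyset$ and then lift it---is genuinely different from the paper's proof (which instead picks a wandering disk $\check U\subset\check S_{[-1,0]}$, uses Lemma~\ref{l:NWH} applied to its projection $U\subset S$ to get a point with three positive and one negative return time to $U$, and reads off the intersection upstairs directly from the positions of the iterates). Your route is tempting because you correctly note that the conclusion of the lemma forces some $\check f^{n}(\check\lambda)\cap\check T^{2}(\check\lambda)\neq\emptyset$ (since $\tilde f^{n}(\tilde\lambda_{1})$ and $\tilde\lambda_{2}$ respectively project to $\check T^{-1}(\check f^{n}\check\lambda)$ and $\check T(\check\lambda)$, which must intersect downstairs). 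Unfortunately the two main steps of your argument do not hold up.

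First, a minor but real error: Proposition~\ref{prop:NWH}(2) applies only to \emph{finite} coverings, and $\check S\to S$ is an infinite cyclic covering; indeed $\check f$ is \emph{not} non-wandering ($\check V_{0}$ is a wandering set, as you say). The conclusion you want is nonetheless correct because Proposition~\ref{prop:wandering} takes $f$ non-wandering on $S$ and the lift $\check f$ to the covering space $\check S$ as its data; citing Proposition~\ref{prop:NWH}(2) here is simply a misstatement.

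Second, and more seriously, the passage from ``$\overline{\bigcup_{n}\check V_{n}}$ meets $\check T^{2}(\check\lambda)$'' to ``$\check f^{N}(\check\lambda)\cap\check T^{2}(\check\lambda)\neq\emptyset$ for some $N$'' is a genuine gap. The set $\overline{\bigcup_{n}\check V_{n}}=\overline{L(\check\lambda)}\cap\overline{\bigcup_{n\geq1}R(\check f^{n}\check\lambda)}$ is only ``half-closed'': its frontier on the left is the limit set of the nested loops $\check f^{n}(\check\lambda)$, not one of those loops. It is entirely consistent with everything you have established that $\check T^{2}(\check\lambda)$ is disjoint from every $\check f^{n}(\check\lambda)$: the unique remaining possibility (after your correct exclusion of ``$\check T^{2}(\check\lambda)$ to the left of all $\check f^{n}(\check\lambda)$'' via Proposition~\ref{prop:wandering}) is that $\check T^{2}(\check\lambda)\subset\check V_{N-1}$ for some $N\geq1$, i.e.\ the moving loops $\check f^{n}(\check\lambda)$ ``march past'' $\check T^{2}(\check\lambda)$ without ever touching it, one strip at a time. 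Non-relative-compactness of $\bigcup\check V_{n}$ does not rule this out; ruling it out is essentially equivalent to the lemma itself, so the argument is circular at this point.

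Third, the lifting step---which you flag as ``the main obstacle''---is left as a sketch, and the heuristic you give (``a preimage of $\check z$ on a line of $\tilde{\mathcal L}_{2,r}$ adjacent to $\tilde\lambda_{0}$ has its $\tilde f^{-N}$-image on a line of $\tilde{\mathcal L}_{1,l}$ adjacent to $\tilde\lambda_{0}$'') is not correct as stated: after fixing $\tilde z$ on a line of $\tilde{\mathcal L}_{2,r}$, the lift of $\check T^{-1}(\check\lambda)$ through $\tilde f^{-N}(\tilde z)$ need not bound $\tilde S_{[-1,0]}$. In the paper this is handled by a different mechanism: one shows that the relevant iterate lies in $R(\tilde\lambda_{0})$ and, using the Brouwer-line property of the lifts of $\check\lambda$ bounding $\tilde S_{[-1,0]}$, that the iterate cannot lie across a line of $\tilde{\mathcal L}_{1,r}$, forcing it across a line of $\tilde{\mathcal L}_{1,l}$. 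Without an argument of this type the lift step does not go through.

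In short, the plan is reasonable in outline but not a proof: it has a circularity in the downstairs step and an unproved claim in the lifting step. The paper's proof avoids both by working directly with a recurrent-type orbit (via Lemma~\ref{l:NWH}) whose iterates are located on the correct sides of the correct lifted lines from the start.
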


\begin{proof}  One can find an open disk $\check U\subset\check S_{[-1,0]}$  whose image by $\check f$ is on the left of  $\check \lambda$. It is a wandering disk of $\check f$ that projects  onto an open disk $U$ of $S$. There is a  lift $\tilde U\subset \tilde  S_{[-1,0]}$, uniquely defined up to the action of the iterates of $T_0$, whose image by $\tilde f$ is on the left of  $\tilde \lambda_0$. By Lemma  \ref{l:NWH}, there exists $z\in U$ and positive integers  $n_0$, $n_1$, $n_2$ such that $f^{-n_0}(z)$, $f^{n_1}(z)$ and $f^{n_1+n_2}(z)$ belong to $U$. If $\check z$ is the lift of $z$ that belongs to $\check U$, then $\check f^{-n_0}(\check z)$ is on the right of $\check T^{-1}(\check \lambda)$, $\check f^{n_1}(\check z)$ on the left of $\check \lambda$  and $\check f^{n_1+n_2}(\check z)$ on the left of $\check T(\check \lambda)$. So, if $\tilde z$ is the lift of $\check z$ that belongs to $\tilde U$, there exists $\tilde \lambda_1\in \tilde{\mathcal L}_{1,l}$ and $\tilde \lambda_2\in \tilde{\mathcal L}_{2,r}$ such that $\tilde f^{-n_0}(\tilde z)\in  R(\tilde \lambda_1)$ and $\tilde f^{n_1+n_2}(\tilde z)\in  L(\tilde \lambda_2)$. Setting $n=n_0+n_1+ n_2$, one gets that $\tilde f^n( R(\tilde \lambda_1))\cap L(\tilde \lambda_2)\not=\emptyset,$ which implies that  $\tilde f^n(\tilde \lambda_1)\cap \tilde \lambda_2 \not=\emptyset$.\end{proof}

\begin{proof}[Proof of Proposition  \ref{proposition:connections-better}] We consider the annuli:
 $$\hat U_1=\bigcup_{n\geq 0} \hat f^{-n}(L(\hat \lambda)),\enskip  \hat U_2=\bigcup_{n\geq 0} \hat f^{n}(R(\hat \lambda)), \enskip \hat U_{0}=  \hat U_1\cap \hat U_2,$$
 and the respective covering spaces:
  $$\tilde U_1=\bigcup_{n\geq 0} \tilde  f^{-n}(L(\tilde  \lambda_0)),\enskip  \tilde  U_2=\bigcup_{n\geq 0} \tilde  f^{n}(R(\tilde  \lambda_0)), \enskip\tilde  U_{0}= \tilde U_1\cap\tilde  U_2.$$
The three annuli are invariant by $\hat f$. By Caratheodory's Prime End Theory (see \cite{M} for instance), each annulus  $\hat U_0$, $\hat U_1$, $\hat U_{2}$, can be compactified as an annulus in such a way that the restriction of $\hat f$ to the former annulus extends to a homeomorphism of the compact annulus. More precisely, to compactify  $\hat U_0$,  one must add the circle of prime ends $\hat S_1$ corresponding to the end on the right of $\hat\lambda$ and  the circle of prime ends $\hat S_2$ corresponding to the end on the left of $\hat\lambda$; to compactify  $\hat U_1$,  one must add $\hat S_1$ and $\hat J_2$; to compactify  $\hat U_2$,  one must add $\hat J_1$ and $\hat S_2$. Furthermore, one can add the covering spaces $\tilde S_1$ and $\tilde S_2$ of $\hat S_1$ and $\hat S_2$ to $\tilde U_0$ in such a way that the restriction of $\tilde f$ extends continuously  to the added space. Similarly, one can add $\tilde S_1$, $\tilde J_2$ to $\tilde U_1$ and $\tilde J_1$, $\tilde S_2$  to $\tilde U_{2}$. For every $i\in\{0,1,2\}$, one can consider the sets $\tilde{\mathcal E}_1(\tilde U_i)$ and $\tilde{\mathcal E}_2(\tilde U_i)$ like in Sub-section \ref{s.forcing}, noting that $\tilde{\mathcal E}_1(\tilde U_0)$  is a subset of  $\tilde{\mathcal E}_1(\tilde U_1)$ and  $\tilde{\mathcal E}_2(\tilde U_0)$ a subset of $\tilde{\mathcal E}_2(\tilde U_2)$. One can define the rotation numbers $\rho_1$, $\rho_2$, defined respectively by $\tilde f$ on each spaces  $\tilde S_1$ and $\tilde S_2$. Note that at least one of the following conditions is satisfied:
  $$\rho_1\not=r_{i_0}, \enskip \rho_2\not=0, \enskip \rho_1\not=\rho_2,$$ meaning that a boundary twist condition is satisfied on at least one annulus $\hat U_0$, $\hat U_1$ or $\hat U_{2}$. The sets $\tilde U_0\cap \left(\bigcup_{\tilde \lambda\in\tilde{\mathcal L}_1}  \tilde\lambda\right)$ and $\tilde U_1\cap \left(\bigcup_{\tilde \lambda\in\tilde{\mathcal L}_1}  \tilde\lambda\right)$ coincide, we note $\tilde{\mathcal D}_1$ the set of its connected components. Each element $\tilde\delta_1$ of $\tilde{\mathcal D}_1$ is contained  in a line 
 $\tilde \lambda_1\in\tilde{\mathcal L}_1 $, moreover it holds that $\tilde \lambda_1\in\tilde{\mathcal L}_{1,l} $. Note that $ \tilde \delta_1$ is a line of  $\tilde U_1$ that has two different limit points in the frontier of $\tilde U_1$ in $\overline{\tilde S}$ (called accessible points). An important property of prime end theory is that  $ \tilde \lambda_1$ has two different limit points in $\tilde S_1$.  In particular  $\tilde{\mathcal D}_1$ is a subset of $\tilde{\mathcal E}_1(\tilde U_0)$ (and consequently of $\tilde{\mathcal E}_1(\tilde U_1)$). Moreover $T_0$ naturally acts on it. Note that for every $k\not=0$, every $\delta_1\in\tilde{\mathcal D}_1$,
 and every $\tilde \delta_1'\in\tilde{\mathcal D}_1$, one has $\tilde f^k(\delta_1)\cap \delta'_1=\emptyset$. Indeed, we have a similar properties in $\tilde{\mathcal L}_{1,l}$.  This means that one gets a subset $\bigcup_{k\in\Z}\tilde f^k( \tilde{\mathcal D}_1)$  of $\tilde{\mathcal E}_1(\tilde U_0)$ invariant by $T_0$ and $\tilde f$ that consists of pairwise disjoint lines. Consequently, $\prec$ is transitive on $\bigcup_{k\in\Z} \tilde f^k(\tilde{\mathcal D}_1)$ and induces an order $\preceq$ which is not necessarily total. Of course $T_0$ and $\tilde f$ preserve the order and  one has $ \tilde\delta \prec T_0(\tilde\delta)$ for every $\tilde\delta\in\bigcup_{k\in\Z} \tilde f^k(\tilde{\mathcal D}_1)$.
We can define similarly $\tilde{\mathcal D}_2\subset\tilde{\mathcal E}_2(\tilde U_0)$ with the same properties.

By Lemma \ref{lemma:connections}, there exist $\tilde \lambda_1\in\tilde{\mathcal L}_{1,l}$, $\tilde \lambda_2\in\tilde{\mathcal L}_{2,r}$ and $n_0\geq 1$ such that $\tilde f^{n_0}(\tilde \lambda_1)\cap \tilde \lambda_2\not=\emptyset$. Choose $\tilde z\in \tilde \lambda_1\cap \tilde f^{-n_0}(\tilde \lambda_2 )$. Its orbit is included in $\tilde U_0$. So, there exists  $\tilde\delta_1\in \tilde {\mathcal D}_1$ and  $\tilde\delta_2\in \tilde {\mathcal D}_2$ such that $\tilde z \in \tilde\delta_1$ and $\tilde f^{n_0}(\tilde z)\in\tilde\delta_2$.

We will begin by studying the case where $ \rho_1\not=r_{i_0}$  and will prove that the first item of Proposition \ref{proposition:connections-better} is satisfied. Then we will study the case where $ \rho_2\not=0$ and will prove that the second item is satisfied. Eventually we will look at the case where $\rho_1=r_{i_0}$ and $ \rho_2=0$  and will see that both items are satisfied.

Suppose first that $ \rho_1\not=r_{i_0}$.  Note that all properties of Proposition  \ref{prop:connexions-general} are satisfied with $\tilde\delta_1\in \tilde{\mathcal E}_1(\tilde U_1)$ and $\tilde\lambda_2\in\tilde{\mathcal E}_2(\tilde U_1)$. Setting $\rho_-=\min(\rho_1,r_{i_0})$ and $\rho_+=\max(\rho_1,r_{i_0})$ one gets that
$$\rho_-(n_0+q)<p<\rho_+(n_0+q)\Rightarrow\tilde f^{2n_0+q}(\tilde \delta_1)\cap T_0^p(\tilde\lambda_2)\not=\emptyset.$$
So, if $\rho_-(n_0+q)<p<p'<\rho_+(n_0+q)$, there exists a segment $\tilde\sigma_1\subset \tilde\delta_1\subset\tilde\lambda_1$ such that:
\begin{itemize}
\item $\tilde f^{2n_0+q}(\tilde \sigma_1)$ joins $T_0^{p}(\tilde\lambda_2)$ and $T_0^{p'}(\tilde\lambda_2)$;
\item the interior of  $\tilde f^{2n_0+q}(\tilde\sigma_1)$ is included in $R(T_0^{p}(\tilde\lambda_2))\cap R(T_0^{p'}(\tilde\lambda_2))$;
\item  $\tilde f^{2n_0+q}(\tilde\sigma_1)$ is included in $\bigcup_{ k\leq 0}\tilde f^{-k}(L(\tilde\lambda_0))$.

\end{itemize}

Suppose now that $ \rho_2\not=0$.  The properties of Proposition  \ref{prop:connexions-general} are satisfied with $\tilde\lambda_1\in \tilde{\mathcal E}_1(\tilde U_2)$ and $\tilde\delta_2\in\tilde{\mathcal E}_2(\tilde U_2)$. Setting $\rho_-=\min(\rho_2,0)$ and $\rho_+=\max(\rho_2,0)$ one gets that
$$\rho_-(n_0+q)<p<\rho_+(n_0+q)\Rightarrow\tilde f^{2n_0+q}(\tilde \lambda_1)\cap T_0^p(\tilde\delta_2)\not=\emptyset.$$
So, if $\rho_-(n_0+q)<p<p'<\rho_+(n_0+q),$ there exists a segment $\tilde\sigma_2\subset \tilde\delta_2\subset\tilde\lambda_2$ such that:
\begin{itemize}
\item $\tilde f^{-2n_0-q}(\tilde \sigma_2)$ joins $T_0^{-p}(\tilde\lambda_1)$ and $T_0^{-p'}(\tilde\lambda_1)$;
\item the interior of  $\tilde f^{-2n_0-q}(\tilde\sigma_2)$ is included in $L(T_0^{-p}(\tilde\lambda_1))\cap L(T_0^{-p'}(\tilde\lambda_1))$;
\item  $\tilde f^{-2n_0-q}(\tilde\sigma_2)$ is included in $\bigcup_{ k\leq 0}\tilde f^{-k}(R(\tilde\lambda_0))$.

\end{itemize}

Suppose now that $\rho_1=r_{i_0}$ and $\rho_2=0$.  The properties of Proposition  \ref{prop:connexions-general} are satisfied with $\tilde\delta_1\in \tilde{\mathcal E}_1(\tilde U_0)$ and $\tilde\delta_2\in\tilde{\mathcal E}_2(\tilde U_0 )$. Setting $\rho_-=\min(0,r_{i_0})$ and $\rho_+=\max(0,r_{i_0})$ one gets that
$$\rho_-(n_0+q)<p<\rho_+(n_0+q)\Rightarrow\tilde f^{2n_0+q}(\tilde \delta_1)\cap T_0^p(\tilde\delta_2)\not=\emptyset.$$

So, if $\rho_-(n_0+q)<p<p'<\rho_+(n_0+q), $ there exists a segment $\tilde\sigma_1\subset \tilde\delta_1\subset\tilde\lambda_1$ such that:
\begin{itemize}
\item $\tilde f^{2n_0+q}(\tilde \sigma_1)$ joins $T_0^{p}(\tilde\lambda_2)$ and $T_0^{p'}(\tilde\lambda_2)$;
\item the interior of  $\tilde f^{2n_0+q}(\tilde\sigma_1)$ is included in $R(T_0^{p}(\tilde\delta_2))\cap R(T_0^{p'}(\tilde\delta_2))$.
\item $\tilde f^{2n_0+q}(\tilde\sigma_1)$ is included in $\bigcup_{ k\leq 0}\tilde f^{-k}(L(\tilde\lambda_0))$.

\end{itemize}

Similarly, there exists a segment $\tilde\sigma_2\subset \tilde\delta_2\subset\tilde\lambda_2$ such that:
\begin{itemize}
\item $\tilde f^{-2n_0-q}(\tilde \sigma_2)$ joins $T_0^{-p}(\tilde\lambda_1)$ and $T_0^{-p'}(\tilde\lambda_1)$;

\item the interior of  $\tilde f^{-2n_0-q}(\tilde\sigma_2)$ is included in $L(T_0^{-p}(\tilde\lambda_1))\cap L(T_0^{-p'}(\tilde\lambda_1))$.
\item  $\tilde f^{-2n_0-q}(\tilde\sigma_2)$ is included in $\bigcup_{ k\leq 0}\tilde f^{-k}(R(\tilde\lambda_0))$.

\end{itemize}

\end{proof}

\begin{remark*}

 The boundary twist condition is satisfied on the whole space $\tilde S$. Setting $\rho_-=\min(0,r_{i_0})$ and $\rho_+=\max(0,r_{i_0})$ one knows that
 $$\rho_-(n_0+q)<p<\rho_+(n_0+q)\Rightarrow\tilde f^{2n_0+q}(\tilde \lambda_1)\cap T_0^p(\tilde\lambda_2)\not=\emptyset.$$
So, if $\rho_-(n_0+q)<p<p'<\rho_+(n_0+q), $ there exists a segment $\tilde\sigma_1\subset \tilde\lambda_1$ such that:
\begin{itemize}
\item $\tilde f^{2n_0+q}(\tilde \sigma_1)$ joins $T_0^{p}(\tilde\lambda_2)$ and $T_0^{p'}(\tilde\lambda_2)$;
\item the interior of  $\tilde f^{2n_0+q}(\tilde\sigma_1)$ is included in $R(T_0^{p}(\tilde\lambda_2))\cap R(T_0^{p'}(\tilde\lambda_2))$.
\end{itemize}
The problem is that we need the supplementary condition
\begin{itemize}
\item  $\tilde f^{2n_0+q}(\tilde\sigma_1)$ is included in $\bigcup_{ k\leq 0}\tilde f^{-k}(L(\tilde\lambda_0))$;
\end{itemize}
to make right the argument of the proof of Proposition \ref{prop:periodic}. There is no reason why such a condition will be satisfied even if $q$ is large enough. This is why we must work in $\tilde U_0$, $\tilde U_1$ or in $\tilde U_2$.
 
\end{remark*}

\section{Homeomorphisms isotopic to the identity}\label{s.isoidentty}

The goal of this section is to prove Proposition  \ref{prop:connexions}, which means to prove that  if $S$ is an orientable closed surface of genus $g\geq2$ and $f$ a non wandering homeomorphism of $S$  isotopic to the identity, then either $f$ has periodic points of period arbitrarily large, or every periodic point of $f$ is fixed and $f$ is isotopic to the identity relative to its fixed point set (the existence of at least one fixed point being a consequence of Lefschetz formula).

\begin{proof} [Proof of Proposition    \ref{prop:connexions}] We keep the same notations as before. The map $f$ being isotopic to the identity and the genus of $S$ being larger than $1$, there exists a unique lift  $\tilde f$ of $f$ to $\tilde S$ that commutes with the covering automorphims. A periodic point of $f$ that is lifted to a periodic point of $\tilde f$ is called a {\it contractible periodic point}. It was proved in \cite{Lec2} that:

\begin{itemize}
\item either $f$ has periodic points of arbitrarily large period;

\item or every contractible periodic point of $f$ is fixed and $f$ is isotopic to the identity relative to the contractible fixed point set. 
\end{itemize}
So, Proposition  \ref{prop:connexions} is an extension of this result and to obtain Proposition  \ref{prop:connexions} it remains to show  that every periodic point is contractible.  We will argue by contradiction and suppose that there exists a non contractible periodic point $z_0$, denoting by $q_0$ its period. If $\tilde z_0$ is a lift of $z_0$, there exists  $T_0\in G\setminus\{\mathrm{Id})$ such that $\tilde f^q(\tilde z_0)=T_0(\tilde z_0)$.  The map $\tilde f$ commutes with $T_0$ and lifts a homeomorphism $\hat f$ of $\hat S=\tilde S/T_0$. Recall that $\tilde f$ extends to a homeomorphism of $\overline{ \tilde S}$ that fixes every point of $\partial S$. Consequently $\hat f$ extends to a homeomorphism of the compact annulus $\overline{\hat S}$ obtained by adding the two circles $\hat J_1= \tilde J_1/T_0$ and $\hat J_2= \tilde J_2/T_0$, where $\tilde J_1$ and $\tilde J_2$ are the two connected components of $\partial S\setminus\{\alpha(T_0), \omega(T_0)\}$, the first one on the right of every $T_0$-line, the second on the left.  Note that every point of $\hat J_1\cup \hat J_2$ is fixed, with a rotation number equal to zero for the lift $\tilde f$. Note also that $\tilde z_0$ projected onto a periodic point $\hat z_0$ of period $q_0$ and rotation number $1/q_0$.  One can apply Theorem \ref{th:PB}:

\begin{itemize} 
\item either, for every rational number $p/q$ between $0$ and $1/q_0$, written in an irreducible way, there exists a periodic point $z$ of $\hat f$ of period $q$ and rotation number $p/q$ for $\tilde f$;
\item or there exists an essential simple  loop $\hat \lambda\subset \hat S$ such that $\hat f(\hat \lambda)\cap \hat \lambda=\emptyset.$ 
\end{itemize}  
In case the first item holds, we can prove, as we did in Section \ref{s.dehn} for a map isotopic to a Dehn twist map, that $f$ has periodic points of period arbitrarily large.  So, we can assume that the second item holds. Consider the lift $\tilde\lambda_0\subset \tilde S$ of $\hat\lambda$, oriented  as a $T_0$-line. Replacing $f$ with $f^{-1}$ if necessary, we can suppose that it is a Brouwer line. The fact that $\tilde f$ commutes with the covering automorphisms implies that every line $T(\tilde \lambda_0)$, $T\in G$,  is a Brouwer line of $\tilde f$. 

There is a natural partition 
$\tilde{\mathcal L}= \tilde{\mathcal L_0}\cup \tilde{\mathcal L_1}\cup \tilde {\mathcal L_2}$
of the set of lifts of $\hat\lambda$, defined as follows:
\begin{itemize}
\item every $\tilde\lambda\in  \tilde{\mathcal L_0}$ meets $ \tilde\lambda_0$;
\item every $\tilde\lambda\in  \tilde{\mathcal L_1}$ is included in $R(\tilde\lambda_0)$;
\item every $\tilde\lambda\in  \tilde{\mathcal L_2}$ is included $L(\tilde\lambda_0)$.
\end{itemize} 
Note that the ends of a line in $\tilde{\mathcal L_1}$ belong to $\tilde J_1$ and the ends of a line in $\tilde{\mathcal L_2}$ belong to $\tilde J_2$. Moreover, we have two partitions $\tilde{\mathcal L_1}=\tilde{\mathcal L}_{1,r}\cup \tilde {\mathcal L}_{1,l}$
and $\tilde{\mathcal L_2}=\tilde{\mathcal L}_{2,r}\cup \tilde {\mathcal L}_{2,l}$, where:
\begin{itemize}
\item $\tilde\lambda\in \tilde{\mathcal L}_{1,r}\cup \tilde{\mathcal L}_{2,r}$ if $\tilde\lambda_0\subset  R(\tilde\lambda)$;
\item $\tilde\lambda\in \tilde{\mathcal L}_{1,l}\cup \tilde{\mathcal L}_{2,l}$ if $\tilde\lambda_0\subset L(\tilde\lambda)$.
\end{itemize}

Note that the subsets defined above are all invariant by $T_0$. A last important remark is the following: there exists $N>0$ such that
if $\tilde \lambda$ is a lift of $\lambda$, there exists at most $N$ other lifts that meet $\tilde \lambda$ up to the action of $T$, where $T$ is the generator of the stabilizer of $\tilde \lambda$ (in particular the number of $T_0$-orbits in $\tilde{\mathcal L_0}$ is bounded by $N$). Indeed fix a segment $\tilde\delta\subset\tilde\lambda_0$ joining a point $\tilde z$ to $T_0(\tilde z)$ the set of $T\in G$ such that $T(\tilde\delta)\cap \tilde\delta\not=\emptyset$ is finite, choose $N$ to be its cardinal.

\begin{lemma} \label{l:connexion2} There exist $\tilde\lambda_1\in \tilde{\mathcal L}_{1,l}$, $\tilde\lambda_2\in \tilde{\mathcal L}_{1,r}$ and $n_0\geq 1$ such that $\tilde f^{n_0}(\tilde\lambda_1)\cap \tilde\lambda_2\not=\emptyset.$

\end{lemma}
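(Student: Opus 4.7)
My plan is to adapt the proof of Lemma \ref{lemma:connections}, replacing the cyclic cover $\check S$ there by the annular cover $\hat S = \tilde S/T_0$ here. Because $\hat f(\hat\lambda) \subset L(\hat\lambda)$, the preimage $\hat f^{-1}(\hat\lambda)$ lies in $R(\hat\lambda)$; I pick a small open disk $\hat U$ in the non-empty strip $R(\hat\lambda) \cap L(\hat f^{-1}(\hat\lambda))$. Then $\hat f(\hat U) \subset L(\hat\lambda)$, and iterating the Brouwer line property yields $\hat f^n(\hat U) \subset L(\hat f^{n-1}(\hat\lambda))$ for every $n \geq 1$, so $\hat U$ is wandering for $\hat f$. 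Projecting $\hat U$ to $U \subset S$ and applying Lemma \ref{l:NWH} with $q = 3$ to the non wandering map $f$, I obtain $z \in U$ and positive integers $n_0, n_1, n_2$ with $f^{-n_0}(z), z, f^{n_1}(z), f^{n_1+n_2}(z) \in U$. Fix the lift $\tilde U \subset R(\tilde\lambda_0) \cap L(\tilde f^{-1}(\tilde\lambda_0))$ and $\tilde z \in \tilde U$ over $z$; the Brouwer line property gives $\tilde f^{-n_0}(\tilde z) \in R(\tilde\lambda_0)$ and $\tilde f^{n_1+n_2}(\tilde z) \in L(\tilde\lambda_0)$, each sitting in a $G$-translate of $\tilde U$.

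The key simplification is that, since $f$ is isotopic to the identity, $\tilde f$ commutes with $G$ and restricts to the identity on $\partial\tilde S$; hence for any lift $\tilde\lambda$ of $\lambda$ and any $n$, the line $\tilde f^n(\tilde\lambda)$ has the same endpoints on $\partial\tilde S$ as $\tilde\lambda$. Consequently, as soon as I produce lifts $\tilde\lambda_1 \in \tilde{\mathcal L}_{1,l}$ and $\tilde\lambda_2 \in \tilde{\mathcal L}_{1,r}$ whose four endpoints interleave on the arc $\overline{\tilde J_1}$ (all four endpoints lie there since both lifts sit in $R(\tilde\lambda_0)$), the Jordan curve theorem forces $\tilde f^{n_0}(\tilde\lambda_1)\cap\tilde\lambda_2 \neq \emptyset$ for every $n_0 \geq 1$, giving the lemma.

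The main obstacle is the construction of two such lifts in the correct orientation classes with interleaving endpoints. A natural candidate for $\tilde\lambda_1$ is the lift of $\lambda$ that bounds $T_{-n_0}(\tilde U)$ on its $\tilde\lambda_0$-facing side: since this translate lies in $R(\tilde\lambda_0)$, the bounding line belongs to $\tilde{\mathcal L}_1$, and its orientation, with $\tilde\lambda_0$ on the far side of the translate, forces it into $\tilde{\mathcal L}_{1,l}$. For $\tilde\lambda_2$ the translate $T_{n_1+n_2}(\tilde U)$ sits in $L(\tilde\lambda_0)$, so a naive choice produces a bounding lift in $\tilde{\mathcal L}_2$ rather than $\tilde{\mathcal L}_1$; one must modify the construction (for instance by combining further orbit returns with the $T_0$-action on $\tilde\lambda_0$) to obtain a translate lying in $R(\tilde\lambda_0)$ whose bounding lift of $\lambda$ falls into $\tilde{\mathcal L}_{1,r}$. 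Verifying that this can be done while simultaneously guaranteeing the interleaving of endpoints on $\partial\tilde S$ is the delicate geometric point where, in contrast with Section \ref{s.dehn}, no clean strip decomposition of $\tilde S$ is available and the analysis must be carried out directly in the disk model.
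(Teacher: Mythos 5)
Your plan runs into a genuine obstacle, but the obstacle is largely an artefact of a typo in the statement. The paper's own proof of this lemma (and the later passages that use it, as well as the parallel Lemma~\ref{lemma:connections} in Section~\ref{s.dehn}) produce $\tilde\lambda_2\in\tilde{\mathcal L}_{2,r}$, not $\tilde{\mathcal L}_{1,r}$: the forward-return translate $T^{(k)}(\tilde\lambda_0)$ lands in $\tilde{\mathcal L}_{2,r}$ and the backward-return translate in $\tilde{\mathcal L}_{1,l}$. So the ``naive choice'' you dismiss in your last paragraph --- a bounding lift sitting in $\tilde{\mathcal L}_2$ --- is exactly the intended one, and there is no need to force $\tilde\lambda_2$ back into $R(\tilde\lambda_0)$. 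With the corrected target, the two lines have ends on opposite arcs $\tilde J_1$ and $\tilde J_2$ rather than four interleaving ends on $\tilde J_1$. The intersection is then not an interleaving argument: one shows $\tilde f^{n_0}(R(\tilde\lambda_1))\cap L(\tilde\lambda_2)\neq\emptyset$, notes that $\tilde f^{n_0}$ fixes $\partial\tilde S$ pointwise so $\overline{\tilde f^{n_0}(R(\tilde\lambda_1))}$ still meets $\partial\tilde S$ only in a subarc of $\tilde J_1$ (because $\tilde\lambda_0\subset L(\tilde\lambda_1)$), while $\overline{L(\tilde\lambda_2)}$ meets $\partial\tilde S$ only in a subarc of $\tilde J_2$; hence $\tilde f^{n_0}(R(\tilde\lambda_1))$ can neither contain $L(\tilde\lambda_2)$ nor be contained in $R(\tilde\lambda_2)$, forcing $\tilde f^{n_0}(\tilde\lambda_1)\cap\tilde\lambda_2\neq\emptyset$.

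Even with the typo corrected, your proposal has a quantitative gap. Applying Lemma~\ref{l:NWH} with $q=3$, as in the Dehn twist case, is no longer enough. The reason is flagged just below the lemma: in Section~\ref{s.dehn} the lifts of $\lambda$ are pairwise disjoint, whereas here a return translate $T^{(k)}(\tilde\lambda_0)$ may \emph{meet} $\tilde\lambda_0$ (i.e.\ lie in $\tilde{\mathcal L}_0$), in which case it is useless. To rule this out one needs many returns: the paper applies Lemma~\ref{l:NWH} to produce $N+1$ forward and $N+1$ backward return times, where $N$ bounds the number of $T_0$-orbits of lifts of $\lambda$ that meet $\tilde\lambda_0$. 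One then shows that the associated covering automorphisms $T^{(0)}=\mathrm{Id},T^{(1)},\dots,T^{(N)}$ lie in pairwise distinct $T_0$-cosets (if $T^{(k')}=T_0^mT^{(k)}$ then $\tilde f^{m_{k'}-m_k}(\tilde U)\subset L(\tilde\lambda_0)$ and $T_0^m(\tilde U)\subset R(\tilde\lambda_0)$ would both contain a common point, a contradiction). Pigeonhole now produces some $k\geq 1$ with $T^{(k)}(\tilde\lambda_0)\notin\tilde{\mathcal L}_0$, and the Brouwer-line bookkeeping --- the orbit of $\tilde z$ crosses $\tilde\lambda_0$ from right to left at time $1$ and crosses $T^{(k)}(\tilde\lambda_0)$ from right to left at time $m_k+1$ --- rules out the classes $\tilde{\mathcal L}_{1,l}$, $\tilde{\mathcal L}_{1,r}$ and $\tilde{\mathcal L}_{2,l}$, leaving $T^{(k)}(\tilde\lambda_0)\in\tilde{\mathcal L}_{2,r}$. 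The symmetric argument on the backward returns gives $T'^{(k')}(\tilde\lambda_0)\in\tilde{\mathcal L}_{1,l}$, and with $n_0=m_k+m'_{k'}+1$ the intersection follows as above. Your ``further orbit returns'' remark points in this direction, but the motivation is the possible failure of disjointness of lifts, not the need to push $\tilde\lambda_2$ across $\tilde\lambda_0$.
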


\begin{proof} Consider an open disk $\tilde U\subset R(\tilde\lambda_0)\cap \tilde f^{-1}(L(\tilde \lambda_0))$ that projects onto an open disk $U$ of $S$.  Using Lemma \ref{l:NWH},  there exists $z\in U$ and two increasing sequences $(m_i)_{0\leq k\leq N}$, $(m'_i)_{i\leq k\leq N}$, with $m_0=m'_0=0$,  such that for every $k\in\{0,\dots,N\}$, the points $f^{m_k}(z)$ and $f^{-m'_k}(z)$ belong to $U$. Let $\tilde z$ be the lift of $z$ that belongs to $\tilde U$. So there exist two sequences $(T^{(k)})_{0\leq k\leq N}$, $(T'{}^{(k)})_{0\leq k\leq N}$ of covering automorphisms such that  $\tilde f^{m_k}(\tilde z)\in T^{(k)}(\tilde U)$ and $\tilde f^{ -m'_k}(\tilde z)\in T'{}^{(k)}(\tilde U)$.  Suppose that there exist $0\leq k<k'\leq N$ and $m\in\Z$ such that $T^{(k')}=T_0^m T^{(k)}$. Then it holds that $\tilde f^{m_k}(T^{(k)}{}^{-1}(z))\in \tilde U$ and $\tilde f^{m_{k'}}(T^{(k)}{}^{-1}(z))\in T_0^m(\tilde U)$.  We have a contradiction because $T_0^m(\tilde U)\subset  R(\tilde\lambda_0)$ and $\tilde f^{m_{k'}-m_k}( \tilde U)\subset L(\tilde\lambda_0)$. But we have $T^{(0)}=\mathrm{Id}$, and so, there exists $1\leq k\leq N$ such that $T^{(k)}(\tilde\lambda_0)\not\in  \tilde{\mathcal L}_0$. The lines $\tilde\lambda_0$ and $T^{(k)}(\tilde\lambda_0)$ being Brouwer lines, one deduces that  $T^{(k)}(\tilde\lambda_0)\in  \tilde{\mathcal L}_{2,r}$. For the same reasons, one can find $1\leq k'\leq N$ such that $T'{}^{(k')}(\tilde\lambda_0)\in  \tilde{\mathcal L}_{1,l}$.  Setting $n_0=m_k+m'_{k'}+1$, one deduce  that $\tilde f^{n_0}(R(T'{}^{(k')}(\tilde\lambda_0))\cap L(T^{(k)}(\tilde\lambda_0))\not=\emptyset$ which easily implies that $\tilde f^{n_0}(T'{}^{(k')}(\tilde\lambda_0))\cap T^{(k)}(\tilde\lambda_0)\not=\emptyset.$ 

\end{proof}
 We would like to give a proof similar to the proof in Section \ref{s.dehn}.  The fact that $\tilde z_0$ lifts a periodic point of $\hat f$ implies that the points $\tilde f^q\circ T_0^p(\tilde z_0)$, $p\in\Z$, $q\in \Z$, are all on the same side of $\tilde\lambda_0$. We will suppose that they are on the left side, meaning that they all belong to $\tilde U_1=\bigcup_{n\geq 0} \tilde  f^{-n}(L(\tilde  \lambda_0))$, the covering space of $\hat U_1=\bigcup_{n\geq 0} \hat f^{-n}(L(\hat \lambda_0))$. The case where they are on the right side can be treated similarly, replacing $\tilde U_1$ and $\hat U_1$ with  $\tilde U_2=\bigcup_{n\geq 0} \tilde  f^{n}(R(\tilde  \lambda_0))$ and  $\hat U_2=\bigcup_{n\geq 0} \hat f^{n}(R(\hat \lambda))$. Here again, we compactify  $\hat U_1$ by adding $\hat J_2$ and $\hat S^1$, the circle of prime ends corresponding to the end on the right of $\hat\lambda$. To obtain the universal covering space, we add $\tilde J_2$ and $\tilde S_1$, the covering space of $\hat S_1$, to $\tilde U_1$. The map $\tilde f_{\vert \tilde U_1}$ extends continuously  to the added lines and fixes every point of $\tilde J_2$. We denote $\rho_1$ the rotation numbers of $\widetilde f_{\vert\tilde S_1}$. Here again we can define the set  $\tilde{\mathcal D}_1$ of connected components of $\tilde U_1\cap \left(\bigcup_{\tilde \lambda\in\tilde{\mathcal L}_1}  \tilde\lambda\right)$, noting that every element $\tilde\delta$ is a line of $\tilde U_1$ that is contained in a line $\tilde\lambda\in \tilde{\mathcal L}_{1,l}$. 
 
 If we want to repeat the arguments given in Section  \ref{s.dehn}, we will meet a problem. In Section 3, the lines of  $\tilde\lambda\in  \tilde{\mathcal L_1}$ or  $\tilde\lambda\in  \tilde{\mathcal L_2}$ were pairwise disjoint, meaning that $\prec$ induces an order on these sets. This fact was very important in the proof because it was necessary for applying 
Proposition \ref{prop:connexions-general}. It is no more the case here. Nevertheless, there is at most $N$ lines  $T_0^k(\tilde\lambda_1)$, $k\in\Z$,  that intersect $\tilde\lambda_1$ and at most $N$ lines $T^k(\tilde\lambda_2)$ that intersect $\tilde\lambda_2$.  In particular if $s$ is large enough,  $\prec$ induces an order on the sets 
 $$\tilde{\mathcal L}'_{1,l}=\{T_0^{sk}(\tilde\lambda_1),\, k\in\Z\}\enskip \mathrm{and}\enskip \tilde{\mathcal L}'_{2,l}=\{T_0^{sk}(\tilde\lambda_2),\, k\in\Z\}.$$
 So we will have to work in the annulus $\tilde S/T_0^s$ instead of the annulus $\tilde S/T_0$. We define the set  $\tilde{\mathcal D}'_1\subset\tilde{\mathcal D}_1$ of connected components of $\tilde U_1\cap \left(\bigcup_{k\in\Z} T_0^{sk}(\tilde\lambda_1)\right)$.

 Like in Section \ref{s.dehn}, it holds that
$$\begin{aligned}
\tilde\lambda_1\in \tilde{\mathcal L}'_{1,l},\enskip \tilde\lambda'_1\in \tilde{\mathcal L}'_{1,l},\enskip k\in\Z\setminus\{0\}\Rightarrow \tilde f^k(\tilde\lambda_1)\cap \tilde\lambda'_1=\emptyset,\\
\tilde\delta_1\in \tilde{\mathcal D}'_{1},\enskip \tilde\delta'_1\in \tilde{\mathcal D}'_{1},\enskip k\in\Z\setminus\{0\}\Rightarrow \tilde f^k(\tilde\delta_1)\cap \tilde\delta'_1=\emptyset,\\
\tilde\lambda_2\in \tilde{\mathcal L}'_{2,r},\enskip \tilde\lambda'_2\in \tilde{\mathcal L}_{2,r}',\enskip k\in\Z\setminus\{0\}\Rightarrow \tilde f^k(\tilde\lambda_2)\cap \tilde\lambda'_2=\emptyset.
\end{aligned}$$
 Moreover,  as a consequence of Lemma  \ref{l:connexion2}, there exists $\tilde\delta_1\in \tilde{\mathcal D}'_{1}$ such that $\tilde f^{n_0}(\tilde\delta_1)\cap \tilde\lambda_2\not=\emptyset.$ 

There is two cases to study: the case where $\rho_1\not=0$ and the case where $\rho_1=0$. 

\bigskip

\noindent{\it Case where $\rho_1\not=0$.}  It is the case where there is a boundary twist condition. The rotation numbers in the new annulus are $\rho_1/s$ and $0$.  All the arguments appearing in Section \ref{s.dehn} are still valid. Setting $\rho_-=\min(0,\rho_1)$ and $\rho_+=\max(0,\rho_1)$ one gets that
$$\rho_-(n_0+q)<sp<\rho_+(n_0+q)\Rightarrow\tilde f^{2n_0+q}(\tilde \delta_1)\cap T_0^{sp}(\tilde\lambda_2)\not=\emptyset.$$

Moreover, if $$\rho_-(n_0+q)<sp<sp'<\rho_+(n_0+q), $$  then there exists a segment $\tilde\sigma_1\subset \tilde\delta_1\subset\tilde\lambda_1$ such that:
\begin{itemize}
\item $\tilde f^{2n_0+q}(\tilde \sigma_1)$ joins $T_0^{sp}(\tilde\lambda_2)$ and $T_0^{sp'}(\tilde\lambda_2)$;
\item the interior of  $\tilde f^{2n_0+q}(\tilde\sigma_1)$ is included in $R(T_0^{sp}(\tilde\lambda_2))\cap R(T_0^{sp'}(\tilde\lambda_2))$;
\item  $\tilde f^{2n_0+q}(\tilde\sigma_1)$ is included in $\bigcup_{ k\leq 0}\tilde f^{-k}(L(\tilde\lambda_0))$.

\end{itemize}

Like in Section \ref{s.dehn}, we deduce that for every $s\geq 2$, there exists $m_s\geq 0$ such that for every $m\geq m_s$, there exists $\tilde \lambda'_2\in \tilde{\mathcal L}_{2,r}$ such that for every $0<p<p'\leq s$, there exists a segment $\tilde\sigma_1\subset \tilde\lambda_1$  satisfying:
\begin{itemize}
\item $\tilde f^{m}(\tilde \sigma_1)$ joins $T_0^{sp}(\tilde\lambda'_2)$ and $T_0^{sp'}(\tilde\lambda'_2)$;
\item the interior of  $\tilde f^{m}(\tilde\sigma_1)$ is included in $R(T_0^{sp}(\tilde\lambda'_2))\cap R(T_0^{sp'}(\tilde\lambda'_2))$;
\item  $\tilde f^{m}(\tilde\sigma_1)$ is included in $L(\tilde\lambda_0)$;
\end{itemize}
  
 Fix $T_1\in G$ that sends $ \tilde\lambda_1$ onto  $\tilde\lambda_0$. Like in Section \ref{s.dehn}, we can prove that for every $m\geq m_5$, there exists $T_m\in G$ such that $\tilde f^m\circ T_m^{-1}$ has a fixed point, where $T_m$ can be written $T_m =  T_0^{sn_m}\circ T_1$. 
Choose a fixed point $\tilde z_m$ of $\tilde f^{m}\circ T_m^{-1}$. It projects onto a fixed point $z_m\in S$ of $f^m$. Let us prove that the period of $z_m$ tends to $+\infty$ with $m$. Otherwise, there exists $r\geq 0$ and 
an increasing sequence $(m_l)_{l\geq 0}$ such that $z_{m_l}$ has period $r$. So, there exists $S_l\in G$ such that $\tilde f^{r}(\tilde z_{m_l})=S_l(\tilde z_{m_l})$. The map $\tilde f$ commutes with the covering automorphisms. We deduce on one side that $\tilde f^{m_l}(\tilde z_{m_l})=T_{m_l}(\tilde z_{m_l})$ and on the other side that $\tilde f^{m_l}(\tilde z_{m_l})=S_l^{m_l/ r}(\tilde z_{m_l})$ and so $T_{m_l}=S_l^{m_l/ r}$.  Let us explain why it is impossible if $l$ is large enough. Note that 
$$S_l^{m_l/ r}(\tilde\lambda_1)=T_{m_l}( \tilde \lambda_1)=  T_0^{sn_m}\circ T_1( \tilde \lambda_1)=  T_0^{sn_m} (\tilde \lambda_0)=  \tilde \lambda_0.$$ This implies that
$$S_l^{m_l/ r}(\alpha(\tilde\lambda_1))=\alpha(\tilde\lambda_0) \enskip\mathrm{and}\enskip S_l^{m_l/ r}(\omega(\tilde\lambda_1))=\omega(\tilde\lambda_0) .$$
One can find two disjoint segments $\tilde\sigma_{\alpha}$ and $\tilde\sigma_{\omega}$ of $\partial \tilde S$, the first one joining $\alpha(\tilde\lambda_1)$ to $\alpha(\tilde\lambda_0)$, the second one joining $\omega(\tilde\lambda_1) $ to $\omega(\tilde\lambda_0) $. This implies that  for every $0\leq k\leq m_l/r$ it holds that $S_l^k(\alpha(\tilde\lambda_1))\in \sigma_{\alpha}$ and   $S_l^k(\omega(\tilde\lambda_1))\in \sigma_{\omega}$.  Let $\tilde\sigma$ be a segment of $\tilde S$ that joins $\tilde\lambda_1$ to $\tilde\lambda_2$. The set of lifts of $\hat\lambda$ that meet $\tilde\sigma$ is finite. Let $N'$ be its cardinal. Suppose that  $m_l/r\geq 2N+N'$. There exists $0< k<m_l/r$ such that $S_l^k(\tilde \lambda_1)$ does not meet $\tilde \lambda_1\cup\tilde \lambda _0\cup\tilde\sigma$. Nevertheless one of the end of  $S_l^k(\tilde \lambda_1)$ is in the interior of $\tilde\sigma_{\alpha}$ and the other one in the interior of $\tilde\sigma_{\omega}$. We have a contradiction.

\bigskip

\noindent{\it Case where $\rho_1=0$.}  Here there is no boundary twist condition. The twist condition is given by the existence of a periodic point which has a non zero rotation number.  The proof is inspired by arguments of Lellouch appearing in his thesis \cite{Lel}.

\begin{lemma} \label{l:connexion3} There exists an increasing sequence $(n_p)_{p\geq 0} $ such that  $\tilde f^n(\tilde\delta_1)\cap T_0^{sp}(\tilde\lambda_2)\not=\emptyset$
 if $n\geq n_p$.

\end{lemma}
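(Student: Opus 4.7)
The plan is to apply the forcing machinery of Section \ref{s.forcing} in an adapted form: since the prime end rotation number $\rho_1=0$ deprives us of the boundary twist used in the $\rho_1\neq 0$ case, the ``twisted boundary'' will be replaced by the periodic orbit of $\tilde z_0$, whose image in the annulus $\tilde S/T_0^s$ has nonzero rotation number $1/(sq_0)$ coming from $\tilde f^{sq_0}(\tilde z_0)=T_0^s(\tilde z_0)$. Since $\tilde z_0\in\tilde U_1$ by hypothesis, its orbit provides an $\hat f$-invariant compactly supported probability measure on $\hat U_1$ whose rotation number differs from that of either boundary measure, so the hypotheses of Theorem \ref{th:PB} and of the forcing lemma are morally in place even without boundary twist.

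First, I would fatten the orbit $\{\hat f^k(\hat z_0)\}_{0\le k<q_0}$ by choosing small pairwise disjoint open topological disks cyclically permuted by $\hat f$, and remove their union to carve out from $\hat U_1$ an auxiliary invariant subannulus whose two prime end circles carry rotation numbers $0$ (the original end, from $\hat S_1$) and $1/q_0$ (the new one, inherited from the periodic orbit via Carathéodory's theory). After lifting to the $T_0^s$-cover, the ordered sets $\tilde{\mathcal D}'_1$ and the translates of $\tilde\lambda_2$ still lie properly in this subannulus (one shrinks the disks to avoid them), and the baseline connection $\tilde\delta_1 \overset{n_0}{\longrightarrow}\tilde\lambda_2$ from Lemma \ref{l:connexion2} persists. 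Proposition \ref{prop:connexions-general} then applies in this adapted setting and furnishes intersections
$$\tilde f^{2n_0+q}(\tilde\delta_1)\cap T_0^{sp}(\tilde\lambda_2)\neq\emptyset$$
for every pair $(p,q)$ with $0<sp<(n_0+q)/(sq_0)$.

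Second, to upgrade ``intersection at one time'' to ``intersection at all $n\ge n_p$'' I would repeat the device of Proposition \ref{proposition:connections-better-better}: post-composing the segment $\tilde f^{2n_0+q}(\tilde\sigma_1)$ with arbitrarily many further iterates of $\tilde f$ still produces a segment joining two translates of $\tilde\lambda_2$, and the translation vector is absorbed by adjusting the exponent on $T_0^{s}$ (together with the $T_0^s$-equivariance of $\tilde f$). For each $p$ one obtains a threshold $n_p$ from which the intersection with $T_0^{sp}(\tilde\lambda_2)$ persists, and the resulting $(n_p)$ can be chosen increasing because reaching farther translates requires strictly longer travel time under the twist rate $1/(sq_0)$.

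The main obstacle will be the rigorous construction of the virtual twisted boundary around the periodic orbit: one must verify that the prime end rotation number on the frontier of the removed neighborhood is genuinely $1/q_0$ (and not $0$), which is a delicate prime end computation because the rotation number is determined by the dynamics on the frontier rather than at the periodic points themselves. This is the analogue, inside $\hat U_1$, of the Carathéodory argument used in Proposition \ref{proposition:connections-better}, and it is the step where the non-wandering hypothesis (via Proposition \ref{prop:wandering}) must be invoked to rule out pathological prime end behavior.
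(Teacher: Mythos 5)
Your plan of manufacturing a ``virtual twisted boundary'' by removing small disks around the orbit of $\hat z_0$ has a genuine gap, which is exactly the obstacle you flag at the end — and that obstacle is not a technicality but a real obstruction. First, topologically, removing $q_0$ disjoint small disks from $\hat U_1$ produces a surface with $q_0+2$ ends, not an annulus, so there is no invariant subannulus on which to run Proposition~\ref{prop:connexions-general} without a nontrivial extra argument (one would need an essential invariant continuum, à la D\'avalos, whose existence is not given here). Second, and more seriously, the prime end rotation number on the frontier of a neighbourhood of the orbit is a \emph{local} invariant determined by the dynamics of $\hat f^{q_0}$ near the punctures; it has no relation to the \emph{global} rotation number $1/q_0$ with which the orbit winds around $\hat S$. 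The rigid rotation $(x,y)\mapsto (x+1/q_0,y)$ of $\T\times\R$ already shows this: the orbit of a point has rotation number $1/q_0$ in the annulus, but $\hat f^{q_0}=\mathrm{Id}$ near any punctured orbit point, so the local prime end rotation number is $0$. The non-wandering hypothesis and Proposition~\ref{prop:wandering} cannot overcome this, because they constrain global recurrence, not the local rotation number at a periodic point.

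The paper's proof avoids all of this and is elementary. It first reduces, via the Brouwer line property of the lifts of $\hat\lambda$, to proving a single-time intersection, and then by induction to proving $\tilde f^{n_1}(\tilde\delta_1)\cap T_0^s(\tilde\lambda_2)\neq\emptyset$ for some $n_1$. It then forms two lines $\tilde l$ (from a half-line of $\tilde f^{n_0}(\tilde\delta_1)$ and a half-line of $\tilde\lambda_2$) and $\tilde l'$ (similarly), and argues by contradiction: if $\tilde f^m(\tilde l)\cap T_0^s(\tilde l')=\emptyset$ for all $m\geq 0$, then the hypothesis $\rho_1=0$ forces $L(\tilde f^m(\tilde l))\subset L(T_0^s(\tilde l'))$ for all $m$, since the endpoints of $\tilde f^m(\tilde l)$ on $\tilde S_1$ and $\tilde J_2$ cannot overtake those of $T_0^s(\tilde l')$. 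But the periodic point gives $\tilde f^{q_0}(\tilde z_0)=T_0(\tilde z_0)$, so $\tilde f^{-m}(\tilde z_0)$ and $\tilde f^m(\tilde z_0)$ escape to opposite ends of $\tilde U_1$ in the $T_0$-direction; for $m$ large one has $\tilde f^{-m}(\tilde z_0)\in L(\tilde l)$ and $\tilde f^m(\tilde z_0)\in R(T_0^s(\tilde l'))$, which contradicts
$$\tilde f^m(\tilde z_0)=\tilde f^{2m}\bigl(\tilde f^{-m}(\tilde z_0)\bigr)\in L\bigl(\tilde f^{2m}(\tilde l)\bigr)\subset L\bigl(T_0^s(\tilde l')\bigr).$$
So the periodic point is used not to create a rotating boundary but directly as a witness whose forward and backward orbit cross the two lines; this is a purely topological separation argument and does not invoke the forcing proposition at all. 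Your instinct that the twist now ``comes from the periodic orbit'' is the right one — the paper says as much — but the way it is exploited is through this escape-to-both-ends argument, not through a prime end compactification near the orbit.
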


\begin{proof} Every lift of $\hat \lambda$ being a Brouwer line, it is sufficient to prove that there exists $n_p$ such that $\tilde f^{n_p}(\tilde\delta_1)\cap T_0^{sp}(\tilde\lambda_2)\not=\emptyset$. Indeed, if $n>n_p$, then $\tilde f^{n}(\tilde\delta_1)\cap L(T_0^{sp}(\tilde\lambda_2))\not=\emptyset$ because
$$\tilde f^{n-n_p}(\tilde f^{n_p}(\tilde\delta_1)\cap T_0^{sp}(\tilde\lambda_2))=\tilde f^{n}(\tilde\delta_1)\cap \tilde f^{n-n_p}(T_0^{sp}(\tilde\lambda_2))\subset \tilde f^{n}(\tilde\delta_1)\cap L(T_0^{sp}(\tilde\lambda_2)),$$ and it implies that $\tilde f^{n_p}(\tilde\delta_1)\cap T_0^{sp}(\tilde\lambda_2)\not=\emptyset$. By induction, it is sufficient to prove that there exists $n_1\geq n_0$ such that  $\tilde f^{n_1}(\tilde\delta_1)\cap T^s_0(\tilde\lambda_2)\not=\emptyset$.

The fact that  $\tilde f^{n_0}(\tilde\delta_1)\cap \tilde\lambda_2\not=\emptyset$ implies that there exists a half-line $\tilde l_1\subset\tilde\delta_1$ and a half-line $\tilde l_2\subset\tilde\lambda_2$ such that
$\tilde f^{n_0}(\tilde l_1)$ and $\tilde l_2$ intersect in a unique point and such that $\tilde f^{n_0}(\tilde l_1)\cup\tilde l_2$ is a line $\tilde l$ of $\tilde U_1$. For the same reason, there exists a half-line $\tilde l'_1\subset\tilde\delta_1$ and a half-line $\tilde l'_2\subset\tilde\lambda_2$ such that
$\tilde l'_1$ and $\tilde f^{-n_0}(\tilde l'_2)$ intersect in a unique point and such that $\tilde l'_1\cup\tilde f^{-n_0}(\tilde l'_2)$ is a line $\tilde l'$ of $\tilde U_1$. We can also make a choice such that $\tilde l_1\cap\tilde l'_1$ is a half-line of $\tilde \delta_1$ and $\tilde l_2\cap\tilde l'_2$ a half-line of $\tilde \lambda_2$ 

If $m\geq 0$, then $\tilde f^m(\tilde l_2)$ and $\tilde f^{m+n_0}(\tilde l_2)$ are contained in $\overline{L(\tilde\lambda_2)}$ and so are disjoint from $T_0^s(\tilde l'_1)$ and $T_0^s( \tilde l'_2)$. We deduce that $\tilde f^m(\tilde l_2)\cap T_0^s(\tilde l')=\emptyset$. Moreover $\tilde f^m(\tilde l_1)\cap T_0^s(\tilde l'_1)=\emptyset$. So, to get Lemma \ref{l:connexion3}, it is sufficient to prove that there exists $m>0$ such that $\tilde f^m(\tilde l)\cap T_0^s(\tilde l')\not=\emptyset$. 

We will argue by contradiction and suppose that $\tilde f^m(\tilde l)\cap T^s_0(\tilde l')=\emptyset$ for every $m\geq 0$. We can orient $\tilde l$ and $T_0^s(\tilde l')$ such that $L(\tilde l)\subset L(T_0^s(\tilde l'))$. The ends of $\tilde l$ and $\tilde l'$ (on $\tilde S_1$ and $\tilde J_2$) are the same. The fact that $\rho_1=0$ implies that for every $m>0$, the ends of $\tilde f^m(\tilde l)$, which are the images by $\tilde f^m$ of the ends of $\tilde l$, stay smaller than the ends of $T_0^s(\tilde l')$, which are the images by $\tilde T_0^s$ of the ends of $\tilde l$. So we have $L(\tilde f^m(\tilde l))\subset L(T_0^s(\tilde l'))$. 
To get the contradiction, just notice that if $m$ is large enough, then $\tilde f^{-m}(\tilde z_0)\in  L(\tilde l)$ and $\tilde f^{m}(\tilde z_0)\in  R(T_0^s(\tilde l'))$. But we should have
$$\tilde f^{m}(\tilde z_0)=\tilde f^{2m}(\tilde f^{-m}(\tilde z_0))\in {L(\tilde f^{2m}(\tilde l))}\subset L(T_0^s(\tilde l')).$$

\end{proof}
Denote $\tilde \lambda_1$ the element of ${\mathcal L}_{1,l}$ that contains $\tilde\delta_1$. We deduce from Lemma  \ref{l:connexion3} that, for every $q\geq 2$, for every $m\geq n_q$ and every $0<p<p'\leq q$, there exists a segment $\tilde\sigma_1\subset \tilde\lambda_1$  satisfying:
\begin{itemize}
\item $\tilde f^{m}(\tilde \sigma_1)$ joins $T_0^{sp}(\tilde\lambda_2)$ and $T_0^{sp'}(\tilde\lambda_2)$;
\item the interior of  $\tilde f^{m}(\tilde\sigma_1)$ is included in $R(T_0^{sp}(\tilde\lambda_2))\cap R(T_0^{sp'}(\tilde\lambda_2))$;
\item  $\tilde f^{m}(\tilde\sigma_1)$ is included in $\bigcup_{ k\leq 0}\tilde f^{-k}(L(\tilde\lambda_0))$.
\end{itemize}

Like in the proof of  Proposition \ref{proposition:connections-better-better}, and using the the fact that the $T_0^{sp}(\tilde\lambda_2)$ are Brouwer lines (or equivalently that $[f]$ is the identity map) we deduce that, for every $q\geq 2$, there exists $m_q\geq n_q$ such that for every $m\geq m_q$ and every $0<p<p'\leq q$, there exists a segment $\tilde\sigma_1\subset \tilde\lambda_1$  satisfying:
\begin{itemize}
\item $\tilde f^{m}(\tilde \sigma_1)$ joins $T_0^{sp}(\tilde\lambda_2)$ and $T_0^{sp'}(\tilde\lambda_2)$;
\item the interior of  $\tilde f^{m}(\tilde\sigma_1)$ is included in $R(T_0^{sp}(\tilde\lambda_2))$ and in $R(T_0^{sp'}(\tilde\lambda_2))$;
\item  $\tilde f^{m}(\tilde\sigma_1)$ is included in $L(\tilde\lambda_0)$.

\end{itemize}
  
Finally, like in the proof of  Proposition \ref{prop:periodic},  we show that if $T_1$ is the unique covering automorphism  such that
$$T_1(\tilde\lambda_1)=\tilde\lambda_0\enskip\mathrm{and} \enskip T_0^{3s}(\tilde\lambda_2)\prec T_1(T^s_0(\tilde\lambda_1))\prec T_0^{4s}(\tilde\lambda_2),$$
then, for every $m\geq m_5$, the map  $\tilde f^{m}\circ T_1^{-1}$ has a fixed point.

To conclude, choose a fixed point $\tilde z_m$ of $\tilde f^{m}\circ T_1^{-1}$. It projects onto a fixed point $z_m\in S$ of $f^m$. Let us prove that the period of $z_m$ tends to $+\infty$ with $m$. Otherwise, there exist $r\geq 0$ and 
an increasing sequence $(m_l)_{l\geq 0}$ such that $z_{m_l}$ has period $r$. So, there exists $S_l\in G$ such that $\tilde f^{r}(\tilde z_{m_l})=S_l(\tilde z_{m_l})$. One deduces that  $\tilde f^{m_l}(\tilde z_{m_l})=S_l^{m_l/ r}(\tilde z_{m_l})$ and so $T_1=S_l^{m_l/ r}$. In particular $S_l$ belongs to the centralizer of $T$. But it is well known that the centralizer of $T_1$ is a cyclic group. We have got a contradiction.  \end{proof}

\section{The case of the torus}\label{s.torus}

The goal of this section is to prove Theorem \ref{th:rappel2}. The arguments that follow are the ones appearing in \cite{AT} but we need to verify that, up to slight modifications, they are still valid when the area-preserving condition is replaced with the non wandering condition.

Let us consider $M=\begin{pmatrix}a&b\\ c&d\end{pmatrix}\in \mathrm{SL}(2,\Z)$. There are three possibilities:

\begin{itemize}
\item $M$  is hyperbolic, meaning that its eigenvalues have a modulus different from $1$;
\item $M$ is conjugate in  $\mathrm{SL}(2,\Z)$ to $\begin{pmatrix}1&k\\ 0&1\end{pmatrix}$ or to $\begin{pmatrix}-1&k\\ 0&-1\end{pmatrix}$, where $k\in\Z\setminus\{0\}$;
\item $M$ has finite order (and in that case its order is $1$, $2$, $3$, $4$ or $6$).

\end{itemize}
The matrix $M$ induces an orientation preserving automorphism $[M]$ of $\T^2$ by the formula $[M](x,y)=(a x+by, cx+dy)$. We will denote $\mathrm{Aut}(\T^2)$ the group of such automorphisms. Every orientation preserving homeomorphism $f$ of $\T^2$ is isotopic to a unique automorphism, that will be denoted $[f]$, as its associated matrix.

Setting $D=\begin{pmatrix}1&1\\ 0&1\end{pmatrix}$, we have the following classification for  an orientation  homeomorphism  $f$ of $\T^2$ :

\begin{itemize}

\item $[f]$ is hyperbolic;

\item there exists $q\in\{1,2\}$ and $k\in\Z\setminus\{0\}$ such that $f^q$ is conjugate to a homeomorphism isotopic to $[D]^k$;

\item  there exists $q\in\{1,2,3,4,6\}$ such that $f^q$ is isotopic to the identity.
\end{itemize}

Let us recall now the definition of the rotation set of a homeomorphism of $\T^2$ isotopic to the identity (see \cite{MZ} or \cite{S}). For every homeomorphism $f$ of $\T^2$, denote ${\mathcal M}_f$ the set of Borel probability measures invariant by $f$.  Suppose that $f$ is isotopic to the identity. Every lift $\tilde f$ of $f$ to $\R^2$ commutes with the integer translations $\tilde z\mapsto z+k$, $k\in\Z^2$. So, the map $\tilde f-\mathrm{Id}$ lifts a continuous function $\psi_{\tilde f} : \T^2\to\R^2$. One can define the {\it rotation vector} $ \mathrm{rot}_{\tilde f} (\mu)=\int_{\T^2} \psi_{\tilde f} \, d\mu\in\R^2$ of  $\mu\in {\mathcal M}_f$, that measures the mean displacement of $\tilde f$.  The {\it rotation set}  $ \mathrm{rot}(\tilde f)=\{  \mathrm{rot}_{\tilde f}(\mu)\, \vert\, \mu\in{\mathcal M}_f\}$ is a non empty compact convex subset of $\R^2$. Of course, it depends on the lift $\tilde f$ but if $\tilde f'=  \tilde f+k$, $k\in\Z^2$, is another lift, it holds that $ \mathrm{rot}(\tilde f')=\mathrm{rot}(\tilde f)+k$ because 
 $\psi_{\tilde f'}=\psi_{\tilde f}+k$.

 The following properties are very classical (the two first ones are due to Franks \cite{F2}, \cite{F3}, the last one is an easy consequence of the characterization of the ergodic measures as extremal points of $\mathcal {M}_f$).
 
 \begin{enumerate}
 \item  If $p/q$ belongs to the interior of $ \mathrm{rot}(\tilde f)$, where $p\in\Z^2$ and $q\in\N\setminus\{0\}$, then there exists $\tilde z\in\R^2$ such that $\tilde f^q(\tilde z)=\tilde z+p$.
 \item  If $p\in\Z^2$ and $q\in\N\setminus\{0\}$ are such that $p/q= \mathrm{rot}_{\tilde f} (\mu)$, where $\mu\in {\mathcal M}_f$ is ergodic, then there exists $\tilde z\in\R^2$ such that $\tilde f^q(\tilde z)= \tilde z+p$.
 \item Every extremal point of $ \mathrm{rot}(\tilde f)$ is the rotation vector of an ergodic measure.
 
 \end{enumerate}

Let us recall now the definition of the vertical rotation set of a homeomorphism of $\T^2$ isotopic to a Dehn twist (see \cite{A} or \cite{Do}). Suppose that $f$ is isotopic to $[D]^k$, where $k\not=0$ and that $\hat f$ is a lift of $f$ to $\T\times \R$.  It commutes with the vertical translation $V: \hat z\mapsto \hat z+(0,1)$. So, the map $p_2\circ \hat f-p_2$ lifts a continuous function $\psi_{\hat f} : \T^2\to\R$. One can define the {\it vertical rotation number}  $  \mathrm{vrot}_{\hat f} (\mu)=\int_{\T^2} \psi_{\hat f} \, d\mu\in\R$ of a measure $\mu\in {\mathcal M}_f$ and the {\it vertical rotation set} $  \mathrm{vrot}(\hat f)=\{ \rho(\mu)\, \vert\, \mu\in{\mathcal M}_f\}$, which is a non empty segment of $\R$. Here again, it depends on the lift $\hat f$ but if $\hat  f'= V^k\circ \hat f$, $k\in\Z$, is another lift, we have $  \mathrm{vrot}(\hat f')= \mathrm{vrot}(\hat f)+k$. 
 
 We will need the following properties, the first one being proved in \cite{A} and \cite{Do}, the second one in \cite{AGT}:
 
 \begin{enumerate}
 \item if $p/q$ belongs to the interior of $  \mathrm{vrot}(\hat f)$, where $p\in\Z$ and $q\in\N\setminus\{0\}$, there exists $\hat z\in\T\times\R$ such that $\hat f^q(\hat z)= V^p(\hat z)$;
 \item   if $\mathrm{vrot}(\hat f))=\{p/q\}$, where $p\in\Z$ and $q\in\N\setminus\{0\}$, there exists a compact connected essential set $\hat K\subset \T\times\R$  invariant by $\hat f^q\circ V^{-p}$.  
 \end{enumerate}

Let $f$ be an orientation preserving homeomorphim of $\T^2$. It is well known that if $[f]$ is hyperbolic, then  $f$ has periodic points of period arbitrarily large.  So, Theorem \ref{th:rappel2} can be deduced from the two following results:

\begin{proposition} \label{prop:twist2} Let $f$ a non wandering homeomorphism of $\T^2$ isotopic to $[D]^k$, $k\not=0$. Then: 
\begin{itemize}
\item either $f$ has periodic points of period arbitrarily large;

\item or $f$ has no periodic orbit and there exists $\delta\in \T\setminus \Q/\Z$ such that for every lift $\hat f$ of $f$ to $\T\times\R$, there exists $
\hat \delta\in\R$ such that $\hat\delta+\Z=\delta$ and $\mathrm{vrot}(\hat f)=\{\hat\delta\}$.
\end{itemize} 
\end{proposition}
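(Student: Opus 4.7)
The plan is to analyze the vertical rotation set $\mathrm{vrot}(\hat f)\subset\R$, a compact interval. If it has non-empty interior, I would pick a sequence of rationals $p_n/q_n$ in its interior with $q_n$ prime and $q_n\to\infty$, and apply property (1) to obtain points $\hat z_n$ with $\hat f^{q_n}(\hat z_n)=V^{p_n}(\hat z_n)$. Their projections $z_n\in\T^2$ are periodic of period dividing $q_n$, and a fixed point of $f$ would lift to $\hat z$ with $\hat f(\hat z)-\hat z\in\Z\times\Z$, forcing $q_n\mid p_n$ and contradicting $\gcd(p_n,q_n)=1$ with $q_n>1$. Hence the periods are exactly $q_n\to\infty$, giving the first alternative.

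I may therefore assume $\mathrm{vrot}(\hat f)=\{\delta\}$ is a singleton. If $\delta\notin\Q$, no periodic point can exist, since the equidistribution on any periodic orbit would be an invariant measure with rational vertical rotation number, contradicting the irrationality of $\delta$. This yields the second alternative of the proposition.

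The hard case is $\delta=p/q\in\Q$. I would replace $f$ by $f^q$ (still non-wandering by Proposition \ref{prop:NWH}) and the lift by $\hat g:=\hat f^q\circ V^{-p}$, so that $\mathrm{vrot}(\hat g)=\{0\}$. Property (2) then produces a compact connected essential $\hat g$-invariant set $\hat K\subset\T\times\R$, lying in some strip $\T\times[a,b]$ and separating the two ends at $\pm\infty$. Let $\hat U^+$ be the complementary component containing a neighborhood of $+\infty$; it is a $\hat g$-invariant open topological annulus. Applying the Caratheodory prime-end compactification as in Section \ref{s.dehn}, I extend $\hat g|_{\hat U^+}$ to a homeomorphism of a closed annulus $\overline{\hat U}^+$ whose boundary circles are the prime-end circle $\hat S^+$ (carrying a finite rotation number) and the circle $\hat J^+$ corresponding to the end at $+\infty$. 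Since $f$ is isotopic to $[D]^k$, the lift $\hat g$ is isotopic to $[D]^{kq}$ and displaces $\T\times\{y\}$ horizontally by approximately $kqy$, so the horizontal boundary twist between $\hat S^+$ and $\T\times\{y\}$ for large $y$ becomes arbitrarily large. Applying Theorem \ref{th:PB} (in the universal cover of the relevant annulus) to annular subregions of $\overline{\hat U}^+$ of increasing upper height then yields periodic orbits of $\hat g$ of every rational horizontal rotation number in an arbitrarily large interval, and the prime-denominator argument from the first step upgrades these to periodic orbits of $f$ of arbitrarily large period.

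The main obstacle is this final step. To apply Theorem \ref{th:PB} I must produce invariant ergodic measures witnessing arbitrarily large horizontal rotation numbers, which requires exploiting the unbounded twist of the Dehn-twist isotopy in combination with the non-wandering hypothesis to guarantee recurrent orbits at increasing heights. Moreover, the alternative conclusion of Theorem \ref{th:PB} (existence of an essential simple loop in $\hat U^+$ disjoint from its image) must be ruled out, which follows because such a loop, together with the non-wandering hypothesis, would produce wandering open sets whose orbits are relatively compact, contradicting Proposition \ref{prop:wandering}.
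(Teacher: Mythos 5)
Your handling of the interval case and the irrational singleton case matches the paper. The gap is in the rational singleton case, and it is a genuine one.

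Your plan to pass to the complementary annulus $\hat U^+$ above $\hat K$ and then apply a Caratheodory prime-end compactification does not go through: the problem is the end at $+\infty$. Since $\hat g$ lifts a homeomorphism of $\T^2$ isotopic to $[D]^{kq}$, a lift $\tilde g$ of $\hat g$ to $\R^2$ satisfies $\tilde g(z+(0,m))=\tilde g(z)+(mkq,m)$, so the horizontal displacement grows without bound as $y\to+\infty$. In particular $\hat g$ does not extend to any homeomorphism of a compact annulus having a circle $\hat J^+$ at $+\infty$ carrying a finite rotation number; the prime-end circle you invoke is exactly where the construction breaks. You acknowledge the missing ingredient --- producing compactly supported invariant ergodic measures with arbitrarily large horizontal rotation numbers --- but your proposed mechanism (``unbounded boundary twist'' of a compactified annulus) cannot supply them, because a homeomorphism of a compact annulus isotopic to the identity has a bounded rotation set.

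The paper's resolution is short and avoids prime ends entirely. Take any $\hat g$-invariant Borel probability measure $\mu$ supported on the compact essential invariant set $\hat K$ given by the result of Addas-Zanata, Garcia and Tal, and observe that for every $m\in\Z$ the pushforward $V^m_*(\mu)$ is supported on the compact essential $\hat g$-invariant set $V^m(\hat K)$ and that, for a lift $\tilde g$ to $\R^2$, one has $\mathrm{rot}_{\tilde g}(V^m_*\mu)=\mathrm{rot}_{\tilde g}(\mu)+mkq$. This yields compactly supported invariant (hence, via ergodic decomposition, ergodic) measures with rotation numbers ranging over an unbounded set. Applying Theorem~\ref{th:PB} directly on $\T\times\R$ gives either periodic points of every rational rotation number (hence of arbitrarily large period) or a free essential simple loop $\hat\lambda$; the latter is excluded because a wandering disk $\hat U$ between $\hat\lambda$ and $\hat g(\hat\lambda)$ has its whole $\hat g$-orbit trapped between two of the compact sets $V^{m}(\hat K)$, so $\bigcup_{n\in\Z}\hat g^n(\hat U)$ is relatively compact, contradicting Proposition~\ref{prop:wandering}. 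The translated-invariant-set trick $V^m(\hat K)$ is precisely the idea your sketch is missing.
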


\begin{proposition} \label{prop:isid2} Let $f$ a non wandering homeomorphism of $\T^2$ isotopic to the identity. Exactly one of the following assertions holds:
\begin{enumerate}
\item $f$ has periodic points of period arbitrarily large;
\item If $\tilde f$ is a lift of $f$ to $\R^2$, then $\mathrm{rot}(\tilde f)$ is a point or a segment that does not meet $\Q^2/\Z^2$. In this case $f$ has no periodic point.
\item There exists an integer $q\geq 1$ such that:

\begin{itemize}
\item the periodic points of $f^q$ are fixed;
\item the fixed point set of $f^q$ is non empty and $f^q$ is isotopic to the identity relative to it;
\item the rotation set of the lift of $f^q$ that has fixed points is reduced to $0$ or is a segment with irrational slope that has zero as an end point.
\end{itemize}

\end{enumerate}

\end{proposition}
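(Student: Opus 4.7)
The plan is to follow the proof of Addas-Zanata-Tal in \cite{AT}, replacing the area-preserving hypothesis by the non-wandering one with the help of the invariant essential continuum theorem of Addas-Zanata-Garcia-Tal \cite{AGT} at the one place where this makes a difference. Fix a lift $\tilde f$ of $f$ to $\R^2$ and suppose (1) fails, that is, $f$ has no periodic points of arbitrarily large period; the aim is to prove that (2) or (3) must hold.

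First I would show that $\mathrm{rot}(\tilde f)$ has empty interior in $\R^2$, hence is either a single point or a compact segment. Indeed, if $p/q \in \mathrm{int}\,\mathrm{rot}(\tilde f)$ is written in irreducible form, Franks's realization theorem (property (1) of the rotation set stated at the beginning of this section) yields $\tilde z$ with $\tilde f^q(\tilde z) = \tilde z + p$; irreducibility of $p/q$ then forces the projected point of $\T^2$ to have exact period $q$, so letting $q$ range over arbitrarily large primes contradicts the failure of (1). If in addition $\mathrm{rot}(\tilde f)\cap \Q^2 = \emptyset$, then $f$ has no periodic point (any periodic orbit would give a rational rotation vector via the equidistributed measure supported on it), and alternative (2) holds.

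We may therefore assume that $\mathrm{rot}(\tilde f)$ contains a rational vector, and the next step is to produce an honest periodic point of $f$. If this rational is an extremal point of $\mathrm{rot}(\tilde f)$ -- which is automatic when $\mathrm{rot}(\tilde f)$ is a singleton or when the rational is an endpoint of the segment -- then properties (3) and (2) of the rotation set directly give a periodic point. Otherwise $\mathrm{rot}(\tilde f)$ is a non-degenerate segment with rationals only in its relative interior, and this is precisely where I would invoke the non-wandering essential invariant continuum lemma of \cite{AGT}: combined with the Poincar\'e-Birkhoff theorem (Theorem~\ref{th:PB}) applied to an annular quotient associated with the rational rotation direction, it produces either periodic orbits of arbitrarily large period (excluded by hypothesis) or a periodic point realizing the given rational. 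In all cases, $f$ has at least one periodic point $z_0$ of some period $q_0$.

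Finally, let $g = f^{q_0}$ and let $\tilde g$ be the lift of $g$ fixing a chosen lift $\tilde z_0$ of $z_0$, so that $0 \in \mathrm{rot}(\tilde g)$. The non-wandering Conley-type theorem of \cite{Lec2} applied to $g$ asserts that either $g$ has periodic points of arbitrarily large period (excluded) or every contractible periodic point of $g$ is fixed and $g$ is isotopic to the identity relative to its fixed point set. A non-contractible periodic point of $g$ would, by the annular Poincar\'e-Birkhoff argument together with the forcing lemma of Subsection~\ref{s.forcing} (adapted exactly as in the proof of Proposition~\ref{prop:twist}), force periodic orbits of arbitrary period; hence every periodic point of $g$ is fixed. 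For the rotation-set statement in (3): any non-zero rational in $\mathrm{rot}(\tilde g)$ would, by properties (2)--(3), give a periodic point of $g$ with non-zero rotation vector, hence a non-fixed periodic point, a contradiction; so $0$ is the only rational of $\mathrm{rot}(\tilde g)$, and the latter, being a point or a segment, must be either $\{0\}$ or a segment through $0$ with irrational slope. The hardest step in this outline is the middle one -- extracting a periodic point when $\mathrm{rot}(\tilde f)$ is a non-degenerate segment containing rationals only in its relative interior -- and this is precisely the step whose passage from the area-preserving setting of \cite{AT} to the non-wandering setting genuinely depends on the continuum lemma of \cite{AGT}.
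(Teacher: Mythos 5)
Your overall strategy -- follow \cite{AT}, replace the area-preserving hypothesis with non-wandering -- is the right one, but your proposal misplaces the key auxiliary results and leaves a genuine gap at the heart of the argument.

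First, the result of \cite{AGT} quoted in the paper is about maps homotopic to a Dehn twist and is used in the proof of Proposition~\ref{prop:twist2}, not here. For a homeomorphism isotopic to the identity whose rotation set is a nondegenerate segment with \emph{rational} slope, the essential invariant continuum comes from D\'avalos's bounded-deviation theorem \cite{Da}. Moreover, in that case the paper does not produce a single periodic point to feed into \cite{Lec2}; it shows directly that alternative~(1) holds: Theorem~\ref{th:PB} applied in the annular quotient $A_{p'}=\R^2/(\Z p')$ gives either all intermediate rationals (hence arbitrary periods) or a free essential loop, and the latter contradicts Proposition~\ref{prop:wandering} once the essential continuum of \cite{Da} is in hand. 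Your formulation ``either arbitrary periods or a periodic point realizing the given rational'' mischaracterizes the dichotomy.

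Second, and more seriously, you never invoke the result of \cite{LT1} that if $\mathrm{rot}(\tilde f)$ is a nondegenerate segment meeting $\Q^2/\Z^2$ in a unique point $p/q$, then $p/q$ is an \emph{endpoint}. You only observe that ``if the rational is extremal then properties (3)--(2) give a periodic point,'' without explaining why the rational must be extremal. This endpoint fact is what guarantees that $\tilde f^q-p$ has a fixed point, and it is also exactly what yields the last clause of~(3) (that $0$ is an endpoint of the segment when the slope is irrational). Without it, your final paragraph has nothing to stand on.

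Third, your treatment of non-contractible periodic points of $g=f^{q_0}$ via ``the annular Poincar\'e--Birkhoff argument together with the forcing lemma\dots adapted exactly as in the proof of Proposition~\ref{prop:twist}'' is not justified: those arguments are carried out on surfaces of genus $\geq 2$ with hyperbolic universal cover, and the adaptation to $\T^2$ is not a triviality. In fact the paper circumvents this entirely: once it is known that $p/q$ is the \emph{unique} rational point of $\mathrm{rot}(\tilde f)$, every periodic orbit of $f^q$ automatically has rotation vector $p/q$ (the equidistributed measure on the orbit forces it), so every periodic point of $f^q$ lifts to a periodic point of $\tilde f^q-p$, i.e.\ is contractible. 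Then \cite{Lec2} applies to \emph{all} periodic points of $f^q$ at once and there is no non-contractible case to handle. This contractibility observation is the linchpin that makes the case analysis close up, and it is absent from your proposal. Your closing ``any non-zero rational in $\mathrm{rot}(\tilde g)$ would give a non-fixed periodic point, a contradiction'' presupposes that all periodic points of $g$ are fixed -- which is what you were still trying to prove -- so it is circular unless the contractibility step is supplied.
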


\begin{proof} [Proof of Proposition \ref{prop:twist2}] Fix a lift $\hat f$ of $f$ to $\T\times \R$. If $\mathrm{vrot}(\hat f)$ is not reduced to a point, then for every rational number $p/q\in \mathrm{int}(\mathrm{vrot}(\hat f))$ there exists $\hat z\in\T\times\R$ such that $\hat f^q(\hat z)= V^p(\hat z)$.  If $p$ and $q$ are chosen relatively prime, then  $\hat z$ projects onto a periodic point of $f$ of period $q$. Consequently, $f$ has periodic points of period arbitrarily large.

Suppose now that $\mathrm{vrot}(\hat f)$ is reduced to a rational number $p/q$. Replacing $f$ with $f^q$ and $\hat f$ with $V^{-p}\circ \hat f ^q$, one can suppose that  $\mathrm{vrot}(\hat f)=\{0\}$. There exists a compact connected essential set $\hat K\subset \T\times\R$ that is invariant by $\hat f$.  Let $\mu$ be a Borel probability measure supported on $\hat K$ and invariant by $\hat f$. For every $m\in \Z$, the measure $V^m_*(\mu)$ is supported on $V^m(\hat K)$ and invariant by $\hat f$. Fix a lift $\tilde f$ of $\hat f$ to $\R^2$. For every $m\in\Z$, one has $\mathrm{rot}_{\tilde f}( V_*^m(\mu))=\mathrm{rot}_{\tilde f} (\mu)+mk$. So, by Theorem \ref{th:PB} it holds that

\begin{itemize} 
\item either, for every rational number $p/q\in\R$, there exists a periodic point $\hat z$ of $\hat f$ of period $q$ and rotation number $p/q$ for $\tilde f$;
\item or there exists an essential simple loop $\hat\lambda\in \T\times\R$ such that $\hat f(\hat\lambda)\cap\hat\lambda=\emptyset$.

\end{itemize}
Here again, in the first situation, if $p$ and $q$ are chosen relatively prime, $\hat z$  projects onto a periodic point of $f$ of period $q$ and so $f$ has periodic points of period arbitrarily large. Let us prove now that the second situation never holds. Suppose that there exists an essential simple loop $\hat\lambda\in \T\times\R$ such that $\hat f(\hat \lambda)\cap\hat\lambda=\emptyset$. Then one can find a relatively compact wandering disk $\hat U$. The fact that every  $V^m(\hat K)$, $m\in\Z$, is compact, essential and $\hat f$-invariant implies that $\bigcup_{k\in\Z} \hat f^k(\hat U)$ is relatively compact.  This contradicts Proposition   \ref{prop:wandering}. \end{proof}

\begin{proof}  [Proof of Proposition \ref{prop:isid2}]  Fix a lift $\tilde f$ of $f$ to $\R^2$. If $\mathrm{rot}(\tilde f)$ has non empty interior, then $f$ has periodic points of period arbitrarily large. Indeed, if  $(p_1/q, p_2/q)$ belongs to the interior of $ \mathrm{rot}(\tilde f)$, then there exists $\tilde z\in\R^2$ such that $\tilde f^q(\tilde z)= \tilde z+(p_1,p_2)$. Moreover, if $p_1$, $p_2$ and $q$ are chosen with no common divisor, then $\tilde z$ projects onto a  periodic point of $f$ of period $q$.

 Suppose now that $\mathrm{rot}(\tilde f)$ is a point or a segment that does not meet $\Q^2/\Z^2$. Then $f$ has no periodic point and (2) holds.  
 
 Suppose now that $\mathrm{rot}(\tilde f)$ meets $\Q^2/\Z^2$ in a unique point $p/q$. Either $\mathrm{rot}(\tilde f)$ is reduced to $p/q$ or is a segment with irrational slope. It has been proven in \cite{LT1} that $p/q$ is an end point of $\mathrm{rot}(\tilde f)$ in this last case. In particular in both cases, $\tilde f^q-p$ has at least one fixed point, because $p/q$ is the rotation vector of an ergodic measure.  Note that every periodic point of $f^q$ is lifted to a periodic point of $\tilde f^q-p$, meaning it is contractible. Using \cite{Lec2}, one deduces that:
\begin{itemize}
\item either $f$ has periodic points of arbitrarily large period;
\item or the periodic points of $f^q$ are all fixed and lifted to fixed points of $ \tilde f^q-p$, moreover $f^q$ is isotopic to the identity relative to its fixed point set.
\end{itemize} 

It remains to study the case where $\mathrm{rot}(\tilde f)$ is  a non trivial segment with rational slope that intersects $\Q^2/\Z^2$. Replacing $f$ with $f^q$ and $\tilde f$ with $\tilde f^q-p$, where $p/q\in\mathrm{rot}(\tilde f)$, one can suppose that $0 \in \mathrm{rot}(\tilde f)$. The linear line containing $\mathrm{rot}(\tilde f)$ is generated by $p'\in\Z^2\setminus\{0\}$ and invariant by the translation $T: z\mapsto z+p'$.   Let $\hat f$ be the homeomorphism of the annulus  $A_{p'}=\R^2/T$ lifted by $\tilde f$.  A result of D\'avalos \cite{Da} tells us that the orbits of $\hat f$ are uniformly bounded, or equivalently that there exists a compact connected essential set $\hat K\subset \A_{p'}$  invariant by $\hat f$.  The rotation set of $\tilde f$ being non trivial, one can find two compactly supported ergodic measures of $\hat f$ with different rotation numbers (for $\tilde f$). Like in the proof of Proposition \ref{prop:twist2}, we can prove that for every essential simple loop $\hat\lambda\in \A_{p'}$ it holds that $\hat f(\hat\lambda)\cap\hat\lambda\not=\emptyset$ and then that $f$ has periodic point of arbitrarily large periods.  \end{proof}


\begin{thebibliography}{KLecN}







 \bibitem[A]{A} {\sc S. Addas Zanata}:  On properties of the vertical rotation interval for twist mappings. II. {\it Qual. Theory Dyn. Syst.} {\bf  4}  (2003), no. 2, 125---137. 
 
  \bibitem[AT]{AT} {\sc S. Addas-Zanata, F.A. Tal}: On periodic points of area preserving torus homeomorphisms. {\it Far East J. Dyn. Syst.} {\bf 9} (2007), no. 3, 371--378. 

  \bibitem[AGT]{AGT} {\sc S. Addas-Zanata, B. Garcia, F.A. Tal}: Dynamics of homeomorphisms of the torus homotopic to Dehn twists. {\it Ergodic Theory Dynam. Systems} {\bf 34} (2014), no. 2, 409--422.
  
\bibitem[AK]{AK} {\sc D.V. Anosov and A.B. Katok}: New examples in smooth ergodic theory. Ergodic diffeomorphisms, {\it Trans. of Moscow Math. Soc.} {\bf 23}, (1970), 1--35.

\bibitem[B]{B} {\sc G.-D. Birkhoff}:  An extension of Poincar\'e's last geometric theorem,\emph{ Acta Math.} \textbf{47} (1926), 297-311.


\bibitem[CB]{CB} {\sc A. Casson, S. Bleiler}:  Automorphisms of surfaces after Nielsen and Thurston. \textit{London Mathematical Society Student Texts}, \textbf{9}. Cambridge University Press, Cambridge,
1988. iv+105 pp. ISBN: 0-521-34203-1


\bibitem[Da]{Da}  {\sc P. D\'avalos}: On annular maps of the torus and sublinear diffusion. \textit{J. Inst. Math. Jussieu}. \textbf{17} (2018), no4, 913-978.

\bibitem[Do]{Do}  {\sc E. Doeff}: Rotation measures for homeomorphisms of the torus homotopic to a Dehn twist. {\it Ergodic Theory Dynam. Systems} {\bf 17} (1997), no. 3, 575--591.

\bibitem[FLP]{FLP} {\sc A. Fathi, F. Laudenbach, V. Poénaru}: Travaux de Thurston sur les surfaces.
\textit{Astérisque}, \textbf{66-67}. Société Mathématique de France, Paris, 1979. 284 pp.

\bibitem [FS]{FS} {\sc B. Fayad and M. Saprykina}: Weak mixing disc and annulus diffeomorphisms with arbitrary Liouville rotation number on the boundary, {\it Ann. Sci. \'Ecole Norm. Sup.}  (4) {\bf 38} (2005), no. 3, 339--364

\bibitem[F1]{F1}
{\sc J. Franks:} Generalizations of the Poincar\'e-Birkhoff theorem. {\it Annals of Math. }(2) {\bf 128} (1988), no. 1, 139-151. 

\bibitem[F2]{F2}
{\sc J. Franks:}  Recurrence and fixed points of surface homeomorphisms. {\it Ergodic Theory Dynam. Systems 8$^*$} (1988), 9--107. 



\bibitem[F3]{F3}
{\sc J. Franks:} Realizing rotation vectors for torus homeomorphisms. {\it Trans. Amer. Math. Soc.} {\bf 311} (1989), no. 1, 107--115.

\bibitem[F4]{F4}
{\sc J. Franks:}
\newblock Area preserving homeomorphisms of open surfaces of genus zero,
\newblock {\em New-York J. Math.}, {\bf 2} (1996), 1-19.



\bibitem[FH]{FH}
{\sc J. Franks, M. Handel:}
\newblock Periodic points of Hamiltonian surface diffeomorphisms,
\newblock {\em Geometry and Topology}, {\bf 7} (2003), 713--756.



\bibitem[Ha]{Ha} {\sc M. Handel}: Global shadowing of Pseudo-Anosov homeomorphisms. \textit{Ergodic Theory  Dynam. Systems}, \textbf{5}(3), 1985, 373-377.

\bibitem[Ho]{Ho}
{\sc T. Homma}:
 An extension of the Jordan curve theorem.
\textit{Yokohama Math. J.,} \textbf{1}(1953), 125--129.



\bibitem[Lec1]{Lec1}
{\sc P. Le Calvez:}
\newblock Une version feuillet\'ee \'equivariante du th\'eor\`eme de
              translation de {B}rouwer,
\newblock {\it Publ. Math. Inst. Hautes \'Etudes Sci.}, \textbf {102} (2005), 1--98.


\bibitem[Lec2]{Lec2}  {\sc P. Le Calvez}: Periodic orbits of Hamiltonian homeomorphisms of surfaces.
{\it Duke Math. J. } {\bf 133}, (2006) no. 1, 125-184.

\bibitem[LS]{LS}
{\sc P. Le Calvez, M. Sambarino}:
Homoclinic orbits for area preserving diffeomorphisms of surfaces.\textit{ArXiv Math} 1912.06792 (2019).

\bibitem[LT1]{LT1}{\sc P. Le Calvez, F.A. Tal}: Forcing theory for transverse trajectories of surface homeomorphisms.
{\it Invent. Math.} {\bf 212},  (2018), no. 2, 619--729.



\bibitem[LT2]{LT2} {\sc P. Le Calvez, F. A. Tal}: Topological horseshoe for surface homeomorphisms.
\textit{ArXiv Math} 1803.04557 (2016)


\bibitem[Lel]{Lel} {\sc G. Lellouch}: Sur les ensembles de rotation des hom\'eomorphismes de surface en genre $\geq 2$, {\it PhD thesis, Sorbonne Universit\'e} (2019).


\bibitem[M]{M}
{\sc J. Mather:}
\newblock Topological proofs of some purely topological consequences of Caratheodory's Theory of prime ends. in Selected Studies, TH. M. Rassias, G. M. Rassias, eds, 
\newblock {\it North Holland}, {\bf 168} (1982), 225-255.
 


 
\bibitem[MZ]{MZ}
{\sc  M. Misiurewicz, K. Ziemian:} 
\newblock Rotation sets for maps of tori,
\newblock {\em J. London Math. Soc.}, {\bf 40} (1989), 490-506.
 







\bibitem[S]{S}
{\sc S. Schwartzman:}
\newblock Asymptotic cycles,
\newblock {\it Annals of Math.}, {\bf 68} (1957), 270-284.


\bibitem[T]{T} {\sc W. Thurston}: On the geometry and dynamics of diffeomorphisms of surfaces. \textit{Bull.
Amer. Math. Soc. (N.S.)} \textbf{19} (1988), no. 2, 417-431.

\bibitem [WZ]{WZ} {\sc J. Wang, Z. Zhang}: The rigidity of pseudo-rotations on the two-torus and a question of Norton-Sullivan. {\it Geom. Funct. Anal.} {\bf 28} (2018), no. 5, 1487--1516.

\end{thebibliography}
\end{document}